\newtheorem{theorem}{Theorem}[section]
\newtheorem{lemma}[theorem]{Lemma}
\newtheorem{proposition}[theorem]{Proposition}
\newtheorem{corollary}[theorem]{Corollary}
\newtheorem{conjecture}[theorem]{Conjecture}
\newtheorem{remark}{Remark}
\def \<{\langle}
\def \>{\rangle}
\newcommand{\bea}{\begin{eqnarray}}
\newcommand{\eea}{\end{eqnarray}}
\newcommand{\be}{\begin {equation}}
\newcommand{\ee}{\end{equation}}
\newcommand{\g}{\frak g}
\newcommand{\Z}{\mathbb Z}
\newcommand{\C}{\mathbb C}
\begin{document}

\title[ ]{   On some vertex algebras related to  $V_{-1}(\frak{sl} (n) )$ and their characters }

\author{Dra\v{z}en Adamovi\'c and Antun Milas}
\address{Department of Mathematics, University of Zagreb, Croatia}
\email{adamovic@math.hr}

\address{Department of Mathematics and Statistics,
University at Albany (SUNY), Albany, NY 12222}
\email{amilas@math.albany.edu}
%\thanks{A.M. was partially supported by a Simons Foundation grant ($\#$ 317908) \\ D.A. is  partially supported by the Croatian Science Foundation under the project 2634 } 

\maketitle

{\centering\footnotesize {\em Dedicated to Mirko Primc on the occasion of his 70th birthday} \par}

\begin{abstract} We consider several vertex operator (super)algebras closely related to $V_{-1}(\frak{sl} (n) )$, $n \ge 3$ : (a) the parafermionic subalgebra $K(\frak{sl}(n),-1)$ for which we completely describe  its inner structure, (b) the vacuum algebra $\Omega (V_{-1}(\frak{sl} (n) ) )$, and (c) an infinite extension $\mathcal U$ of $V_{-1}(\frak{sl} (n) )$ constructed by combining certain irreducible ordinary modules with integral weights. It turns out that $\mathcal U$ is isomorphic to  the coset vertex algebra $\frak{psl}(n|n) _1 / \frak{sl}(n)_1$, $n \ge 3$. We show that $V_{-1}(\frak{sl}(n))$  admits precisely $n$ ordinary irreducible modules, up to isomorphism. This leads to the conjecture that ${\mathcal U}$ is {\em quasi-lisse}. We present evidence in support of this conjecture: we prove that the (super)character of $\mathcal U$ is quasi-modular of weight one by virtue of being the constant term of a meromorphic Jacobi form of index zero. Explicit formulas and MLDE for characters and supercharacters are given for $\frak{g}=\frak{sl}(3)$ and outlined for general $n$.  We present a conjectural family of 2nd order MLDEs for characters of vertex algebras $\frak{psl}(n|n) _1$, $n \geq 2$.
We finish  with  a theorem  pertaining to characters
of $\frak{psl}(n|n)_1$ and $\mathcal U$-modules. 

 \end{abstract} 

%\maketitle

 \section{introduction}
 
Orbifolding, coset constructions, and simple current extensions, are standard methods for producing new examples of vertex 
algebras. 
%These constructions (at least conjecturally) preserve good properties of the underlying vertex algebra (e.g. rationality). 
For irrational vertex algebras it is also important to consider {\em infinite} simple current extension. For instance, lattice vertex algebras are infinite simple current extensions
of the Heisenberg vertex algebras. Infinite simple current extensions are also  important in logarithmic conformal field theory. As demonstrated by the authors, the triplet vertex algebra $\mathcal{W}(p)$  (which is $C_2$-cofinite) is indeed an infinite simple current extension of the non $C_2$-cofinite singlet vertex algebra \cite{AdM-triplet}. 
 
In this paper the aim is to study the simple affine vertex operator algebra of level $-1$ for $\frak{sl}(n)$, denoted by $V_{-1}(\frak{sl}(n))$, and some of its subalgebras and infinite extensions. This vertex algebra is known to be irrational and non $C_2$-cofinite and has been studied from several points of view. Early work \cite{KW2001} was focused primarily on various properties of  characters of representations. The first author and Per\v{s}e obtained a complete classification of ordinary (atypical)  irreducible $V_{-1}(\frak{sl}(n) )$-modules
and fusion rules of ordinary modules \cite{AP} (see also \cite{AP0}). They also showed that $V_{-1}(\frak{sl}(n) )$ admits 
generic (or typical) series of irreducible representations. Kac and Wakimoto recently obtained a Weyl-Kac type character formula \cite{KW}
 involving higher rank partial theta series (cf. also \cite{BCR}). Asymptotic and modular-type properties of characters of $V_{-1}(\frak{sl}(n) )$-modules were studied recently in the work of Bringmann, Mahlburg and the second author
 \cite{BMM}. Although characters are mixed quantum modular forms \cite{BCR,BMM}, presently it seems difficult to formulate and prove a continuous version of the Verlinde formula of characters even for ordinary modules.
   
Instead of studying the vertex algebra $V_{-1}(\frak{sl} (n) )$, here we focus on two somewhat better behaved objects: (i) the parafermionic algebra(s) and (i) a certain infinite (simple current) 
 extension which we denote by $\mathcal U$. Both vertex algebras have interesting properties from an algebraic and number theoretic standpoints; we explore both aspects in great depth.
  
Let us outline the content and the main results. Throughout we assume that $n \geq 3$. We first review construction of the simple vertex algebra $V_{-1}(\frak{sl}(n) )$. Here we utilize the rank $n$ symplectic fermion vertex algebra $\mathcal{A}(n)$ \cite{Ab}, a certain lattice vertex algebra $V_L$, and the beta-gamma system (or the Weyl vertex algebra) $W_{(n)}$. Then $V_{-1}(\frak{gl}({n}))$ (and then of course $V_{-1}(\frak{sl}(n))$) embbeds inside the zero "charge" subalgebra $W_{(n)}^{(0)} \subset \mathcal{A}(n) \otimes V_L$. Similary, we obtain explicit realization of irreducible ordinary $V_{-1}(\frak{sl}(n))$-modules denoted by  $V_s$, $s \in \mathbb{Z}$ (see Proposition 2.3). 
  
  Then we move on study parafermionic and vacuum subalgebras. Recall that the parafermionic subalgebra $K(\frak{sl}(n),-1)$ is defined 
  as
  $$K(\frak{sl}(n),-1):=\{  v \in V_{-1} (\frak{sl}(n) ) : a(m) v=0, \ a \in M(1), m \geq 0 \},$$
  where $M(1)$ is the Heisenberg subalgebra,
and the vacuum algebra is similarly defined as 
$$\Omega_n:=\{  v \in V_{-1}(\frak{sl}(n)  ) : a(m) v=0, \ a \in M(1), m > 0 \}.$$
Our next result pertains to the structure of these vertex algebras.
  \begin{theorem} We have 
 % \begin{itemize}
  \item[(1)]
 $$K(\frak{sl}(n), -1) \cong  \overline{M(1)}^{\otimes n}, $$
 where $\overline{M(1)}$ is the singlet vertex algebra of central charge $-2$ (cf. \cite{Ab, AdM-triplet,Wa}),
\item[(2)] and 
$$ \Omega_n \cong \mathcal A(n) ^{(0 )},$$
the charge zero subalgebra of the symplectic fermion vertex algebra. 
%\end{itemize} 
  \end{theorem}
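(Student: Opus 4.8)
The plan is to work entirely inside the free field realization $V_{-1}(\frak{gl}(n)) \hookrightarrow W_{(n)}^{(0)} \subset \mathcal{A}(n) \otimes V_L$ recalled above, reading off both subalgebras from the Heisenberg grading of this realization. First I would pin down the Heisenberg data. The Cartan currents of $\frak{sl}(n)$ spanning $M(1)$ are realized through $h_i \sim {:}\beta_i\gamma_i{:}$ as the lattice Heisenberg sitting in the $V_L$ factor, and the crucial point is that the total charge $h_1 + \cdots + h_n$ vanishes identically on $W_{(n)}^{(0)}$. Hence, on $V_{-1}(\frak{sl}(n))$, the rank $n-1$ Cartan $M(1)$ and the full rank $n$ diagonal Heisenberg $\langle h_1, \ldots, h_n\rangle$ have the same positive modes and the same simultaneous zero-mode eigenspaces, subject only to the single relation that the individual charges sum to zero. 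Using the Heisenberg decomposition of $V_{-1}(\frak{sl}(n))$, I would identify $\Omega_n$ with the space of lowest weight vectors (those killed by $M(1)_{>0}$), equipped with its canonical vacuum-space vertex algebra structure, and $K(\frak{sl}(n),-1)$ with the commutant piece additionally killed by $M(1)_0$. I would prove (2) first and then obtain (1) by imposing the zero-mode condition.

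For (2): positive modes of the lattice Heisenberg act only on the $V_L$ factor, so a vector is annihilated by $M(1)_{>0}$ exactly when it carries no Heisenberg excitation from $V_L$, i.e. it lies in $\mathcal{A}(n) \otimes \C[L]$. Intersecting with $W_{(n)}^{(0)}$ imposes total charge zero; since in the bosonization the symplectic-fermion charge of each factor is locked to the corresponding lattice momentum, this intersection is precisely the total-charge-zero subspace $\mathcal{A}(n)^{(0)}$, paired with the appropriate ground states $e^\lambda$. The genuine content of this step is to promote the graded-vector-space identification to a vertex algebra isomorphism: the ambient product of two ground states $w_1\otimes e^{\lambda_1}$, $w_2\otimes e^{\lambda_2}$ produces lattice exponentials and so leaves the ground-state subspace, and one must check that the canonical (projected) product defining $\Omega_n$ strips off exactly these exponential contributions, leaving the symplectic-fermion product on $\mathcal{A}(n)^{(0)}$. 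This yields $\Omega_n \cong \mathcal{A}(n)^{(0)}$.

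For (1): requiring in addition that $M(1)_0$ annihilate the vector forces every diagonal zero mode $h_i(0)$ to vanish, hence each individual symplectic-fermion charge to be zero, not merely their sum; moreover the lattice momentum is now $0$, so all surviving vectors sit in the $e^0$ sector where the projected product coincides with the genuine product and $K(\frak{sl}(n),-1)$ is an honest vertex subalgebra (the commutant of $M(1)$). Writing $\mathcal{A}(n) = \mathcal{A}(1)^{\otimes n}$ and decomposing the charge grading factor by factor gives $\bigotimes_{i=1}^n \mathcal{A}(1)^{(0)}$. Invoking the identification of the charge-zero subalgebra of a single symplectic fermion with the singlet vertex algebra $\mip$ of central charge $-2$ (cf. \cite{Ab, AdM-triplet, Wa}) then produces $K(\frak{sl}(n),-1) \cong \mip^{\otimes n}$. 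As a consistency check I would match central charges: $V_{-1}(\frak{sl}(n))$ has $c = -(n+1)$, the rank $n-1$ Heisenberg contributes $n-1$, so the coset has $c = -2n = n \cdot (-2)$, matching $n$ copies of the singlet.

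The main obstacle is the bookkeeping reconciling the rank $n-1$ Cartan $M(1)$ of $\frak{sl}(n)$ with the rank $n$ diagonal Heisenberg of the realization: everything hinges on the vanishing of the total charge on $W_{(n)}^{(0)}$, which converts the single missing Cartan direction into the global constraint distinguishing $\mathcal{A}(n)^{(0)}$ in (2), where only the total charge is zero, from $\mip^{\otimes n}$ in (1), where all individual charges vanish. A secondary but genuine point is verifying that the graded-vector-space identifications promote to vertex algebra isomorphisms, in particular the matching of the projected vacuum-space product in (2) with the symplectic-fermion product; for (1) this reduces to the cited fact that the Heisenberg commutant of one symplectic fermion is exactly $\mip$, with no extra vectors, which can be corroborated by matching graded dimensions via the relevant false-theta and symplectic-fermion character identities.
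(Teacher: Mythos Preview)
Your plan is sound but differs from the paper's on both parts. For (1), the paper does \emph{not} deduce it from (2); it directly invokes Linshaw's theorem that $\mbox{Com}(M_n(1), W_{(n)}) \cong (\mathcal{A}(1)^{0})^{\otimes n}$ and identifies this commutant with $K(\frak{sl}(n),-1)$ via $W_{(n)}^{(0)} \cong V_{-1}(\frak{sl}(n)) \otimes M_c(1)$. Your derivation of (1) from (2) by imposing the zero-mode condition is more self-contained and avoids that external reference. For (2), the paper works through generators: using Li's $Z$-operator formalism it computes $Y_\Omega(e_{i,j},z)$ explicitly and finds $Z_{i,j} = {:}b_ic_j{:}$, then shows ${:}b_ic_i{:} \in \Omega_n$ via $(Z_{i,j})_1 Z_{j,i}$, and finally invokes a CKLR-style argument that $\mathcal{A}(n)^{(0)}$ is generated by the set $\{{:}b_ic_j{:}\}$. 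You instead aim for the full graded vector-space equality first and then the product check. The ``stripping off exponentials'' step you flag as the genuine content is exactly the $Z$-operator computation the paper carries out, so the cores of the two arguments coincide; what differs is that the paper only needs to match generators and then appeal to generation, whereas you need the whole vector-space identification upfront.

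That last point is where your outline has a gap to close. Your claim that intersecting $\mathcal{A}(n) \otimes \C[L]$ with $W_{(n)}^{(0)}$ inside $\mathcal{A}(n) \otimes V_L$ yields \emph{exactly} the charge-locked ground states presupposes that the image of the Friedan--Martinec--Shenker embedding is precisely $\bigoplus_{m} \mathcal{A}(1)^{(m)} \otimes M(1)e^{m\varphi}$ in each factor. Since $W_{(n)} \hookrightarrow \mathcal{A}(n) \otimes V_L$ is a proper embedding, you must argue that every such charge-locked vector actually lies in the image, not merely that image vectors are of this form. This is standard (and in the paper amounts to the decomposition appearing later as Theorem~4.4), but it is a substantive input your outline assumes rather than proves; the paper's generator-based route sidesteps it by never needing the full vector-space description.
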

  
  Then we consider an infinite extension of $V_{-1}(\frak{sl}(n) )$. We first prove that for every $n \geq 3$
  $$\mathcal U^{(n)}:=\bigoplus_{s \in \mathbb{Z}} V_{n s}$$
  has a simple vertex algebra structure for $n$ even, and $\mathbb{Z}$-graded vertex superalgebra if $n$ is odd.
  Then we can prove  
  \begin{theorem}
  \begin{itemize}
  \item[(1)] The vertex (super)algebra $\mathcal U:=\mathcal{U}^{(n)}$ has precisely $n$  ordinary irreducible modules $\mathcal U_i$,  $0 \leq i \leq n-1$, such that
  $$\mathcal U_i \cong \bigoplus_{s \equiv  i \ {\rm mod} \  n} V_{s}.$$
  \item [(2)] For $n \geq 3$, we have  
  $$\mathcal U \cong \frac{\frak{psl}(n,n)_1 }{ {\frak{sl}(n) }_1}.$$ 
  \end{itemize}
  \end{theorem}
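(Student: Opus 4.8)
The plan is to attack the two statements separately, each by combining the explicit free-field realization from Section 2 with the representation-theoretic input already established for $V_{-1}(\frak{sl}(n))$.

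For part (1), the strategy is to leverage the classification from \cite{AP}: $V_{-1}(\frak{sl}(n))$ admits precisely $n$ ordinary irreducible modules, which (by the preceding discussion and Proposition 2.3) are realized as the charge sectors $V_s$, $s \in \mathbb{Z}$, inside $\mathcal{A}(n) \otimes V_L$, with $V_s \cong V_{s'}$ as $V_{-1}(\frak{sl}(n))$-modules precisely when $s \equiv s' \pmod{\text{something}}$. First I would show that the $\mathcal{U}$-modules $\mathcal{U}_i = \bigoplus_{s \equiv i \,(\mathrm{mod}\, n)} V_s$ are irreducible: because $\mathcal{U}$ acts by shifting the charge grading in steps of $n$, any nonzero $\mathcal{U}$-submodule of $\mathcal{U}_i$ must meet one graded piece $V_s$ in a nonzero $V_{-1}(\frak{sl}(n))$-submodule, which by simplicity of each $V_s$ is all of $V_s$, and then the intertwining (simple-current) action propagates it to every other graded piece. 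Next I would show these exhaust the ordinary irreducibles and are pairwise non-isomorphic: an ordinary $\mathcal{U}$-module restricts to an ordinary $V_{-1}(\frak{sl}(n))$-module, so it is built from the $V_s$, and the residual $\mathbb{Z}/n\mathbb{Z}$ grading coming from the Heisenberg charge modulo $n$ separates the $\mathcal{U}_i$ and forces exactly $n$ classes. This is the standard argument for simple-current extensions, where the module category of the extension is governed by the orbits of the fusion action; the main technical point to verify is that the relevant simple currents have order exactly $n$ and that no further identifications occur.

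For part (2), the goal is the isomorphism $\mathcal{U} \cong \frak{psl}(n|n)_1 / \frak{sl}(n)_1$. The natural approach is to identify both sides as subalgebras living in a common free-field (lattice/fermionic) model and then match them. On the coset side, $\frak{psl}(n|n)_1$ is realized via $n$ pairs of symplectic bosons/fermions (equivalently inside a combination of the $bc$- and $\beta\gamma$-systems), and the regular $\frak{sl}(n)_1$ sits inside diagonally; the coset is the commutant. On the $\mathcal{U}$ side, the realization inside $\mathcal{A}(n) \otimes V_L$ already exhibits $\mathcal{U}$ as the direct sum of the integral-charge sectors. First I would set up a vertex-algebra homomorphism by matching the respective generating fields—identifying the strong generators of $\mathcal{U}$ (coming from the lowest-charge pieces $V_{\pm n}$ together with $V_0 = V_{-1}(\frak{sl}(n))$) with the corresponding coset fields. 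I would then check that this map respects the $\mathbb{Z}$-grading by conformal weight and by the $\frak{gl}(1)$/charge current, so that it is graded and hence injective on each finite-dimensional graded piece.

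The hard part will be proving surjectivity, i.e.\ that the image exhausts the entire coset $\frak{psl}(n|n)_1 / \frak{sl}(n)_1$ rather than a proper subalgebra. The cleanest route is a character/dimension comparison: compute the graded character of $\mathcal{U} = \bigoplus_s V_{ns}$ from the known characters of the $V_s$ (the higher-rank partial-theta expressions of \cite{KW}, \cite{BMM}), compute the graded character of the coset by dividing the known $\frak{psl}(n|n)_1$ character by that of $\frak{sl}(n)_1$, and verify the two agree term by term. Since an injective graded homomorphism between spaces with equal graded dimensions is an isomorphism, this closes the argument. I expect the delicate step to be organizing the coset decomposition of $\frak{psl}(n|n)_1$ as an $\frak{sl}(n)_1$-module cleanly enough that the branching matches the charge decomposition $\bigoplus_s V_{ns}$ on the nose, including getting the super-grading (even $n$ versus odd $n$) and the conformal weights to line up.
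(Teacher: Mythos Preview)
Your approach to part (1) is essentially the same as the paper's (the standard simple-current extension argument via the fusion rules $V_{s_1} \times V_{s_2} = V_{s_1+s_2}$), but your opening premise is wrong: $V_{-1}(\frak{sl}(n))$ does \emph{not} have precisely $n$ ordinary irreducibles, and the $V_s$ are \emph{not} pairwise isomorphic modulo anything. Proposition~2.2 gives a $\mathbb{Z}$-indexed family of pairwise non-isomorphic ordinary irreducibles $V_s$; it is only after passing to the extension $\mathcal{U}$ that the count collapses to $n$. Your subsequent argument (any ordinary $\mathcal{U}$-module restricts to $KL_{-1}$, hence contains some $V_i$, hence by fusion equals $\mathcal{U}_{i \bmod n}$) is fine and matches the paper, so the error is cosmetic here, but you should fix the misstatement.

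For part (2) your route diverges substantially from the paper's, and the divergence costs you. The paper does not build an explicit homomorphism or compare characters; instead it invokes the branching of the conformal embedding $\frak{sl}(n)_{-1} \times \frak{sl}(n)_1 \hookrightarrow \frak{psl}(n|n)_1$ (from \cite{AKMPP-new}, or alternatively derived from the fusion rules of Theorem~\ref{extended-algebra} together with the simple-current structure of $\frak{sl}(n)_1$-modules), obtaining
\[
V_1(\frak{psl}(n|n)) \;=\; \bigoplus_{i=0}^{n-1} \mathcal{U}_i \otimes L_{\frak{sl}(n)}(\Lambda_i).
\]
The coset identification $\mathrm{Com}_{V_1(\frak{psl}(n|n))}(L_{\frak{sl}(n)}(\Lambda_0)) \cong \mathcal{U}$ is then immediate from this Howe-dual decomposition; no character matching is needed. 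Your plan has two weak points. First, ``dividing the $\frak{psl}(n|n)_1$ character by that of $\frak{sl}(n)_1$'' is not how coset characters are computed---you need exactly the branching decomposition above, and once you have it the isomorphism follows structurally, making the character step redundant. Second, constructing the homomorphism by ``matching generating fields'' presupposes you already know strong generators of the coset, which is not available a priori; the paper sidesteps this entirely. What you call the ``delicate step'' (organizing the $\frak{sl}(n)_1$-branching of $\frak{psl}(n|n)_1$) \emph{is} the whole proof in the paper's approach, and it is supplied by the cited conformal-embedding literature rather than rederived.
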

Since our newly introduced vertex algebra has finitely many ordinary modules, it is natural to ask whether it is quasi-lisse in the sense of \cite{AK16}. As we are  currently unable to prove this property,  instead, we investigate the (super)characters 
  of $\mathcal U$ and of its modules.
 If a vertex algebra is quasi-lisse, then necessarily characters and supercharacters must be solutions of modular linear differential equation (MLDE) \cite{AK16}.
 In particular, solutions of such equations are known to be either modular (as in the case of ordinary admissible representations) or quasimodular 
 (as in the case of Deligne's series at non-admissible levels). 
 We prove  
 \begin{theorem} Characters and supercharacters of $\mathcal U$ are quasi-modular forms. More precisely, for $n$ even (resp. odd) the character 
 ${\rm ch}[\mathcal U](\tau)$  (resp. the supercharacter ${\rm sch}[\mathcal U](\tau)$) is a quasi-modular form (with a multiplier) of weight $1$ and depth $1$ on $\Gamma_0(n )$.
   \end{theorem}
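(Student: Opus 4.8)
The plan is to realize the full (super)character as the constant term of a single explicit one--variable meromorphic Jacobi form of index $0$, and then to invoke the theory of Fourier coefficients of such forms. First, using the free--field realization of Proposition 2.3 (the symplectic fermions $\mathcal A(n)$, the lattice vertex algebra $V_L$, and the Weyl/beta--gamma system $W_{(n)}$), I would write each ordinary--module character ${\rm ch}[V_s](\tau)$ in closed form as a product of $\eta$-- and $\theta$--quotients. Introducing a charge--tracking elliptic variable $z$ (with $\zeta=e^{2\pi i z}$), I assemble the generating series
$$\Phi(z,\tau):=\sum_{s\in\mathbb Z}{\rm ch}[V_s](\tau)\,\zeta^{s}.$$
I would then prove that $\Phi$ is a meromorphic Jacobi form: its poles, coming from the beta--gamma denominators of $1/\theta_1$--type, sit at the torsion points $z\in\mathbb Z\tau+\mathbb Z$, the relevant one being $z=0$. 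The decisive structural point is that the index is $0$. This is the shadow of passing from $\frak{gl}(n|n)$ to $\frak{psl}(n|n)$: the supertrace form degenerates, so the quadratic part of the $(L_0,\text{charge})$ bigrading vanishes and no Gaussian factor appears in the elliptic transformation. A short count of the $\eta$--powers then gives modular weight $1$.

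Next I would recover the coset character by the root--of--unity average
$$ {\rm ch}[\mathcal U](\tau)=\frac1n\sum_{j=0}^{n-1}\Phi\!\Big(\tfrac jn,\tau\Big),$$
which isolates exactly the Fourier coefficients with $s\equiv 0 \pmod n$, i.e.\ the summands $V_{ns}$. For $1\le j\le n-1$ the argument $j/n$ avoids the poles, so each $\Phi(j/n,\tau)$ is finite, and the symmetrized sum $\sum_{j=1}^{n-1}\Phi(j/n,\tau)$ is a genuine weight--$1$ modular form on $\Gamma_0(n)$ (with the multiplier inherited from the $\eta$--quotients, and with $\Gamma_0(n)$ permuting the $n$--torsion points). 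The term $j=0$ is the delicate one: $\Phi(0,\tau)$ is ill--defined because of the pole at $z=0$, and its correct regularized value is precisely the $z$--constant term (the $\zeta^0$ Fourier coefficient) of $\Phi$. This is the sense in which ${\rm ch}[\mathcal U]$ is the constant term of a meromorphic Jacobi form of index $0$. The even/odd dichotomy is dealt with by shifting the elliptic argument by a half--period, which inserts the sign $(-1)^{F}$ and, for $n$ odd, turns the character into the supercharacter.

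The heart of the argument is showing that this constant term is quasi--modular of weight $1$ and depth $1$. Here I would use the standard analysis of Fourier coefficients of meromorphic Jacobi forms (in the spirit of Zwegers and of Dabholkar--Murthy--Zagier), specialized to index $0$. Extraction of the $\zeta^0$ coefficient does not commute with the modular action: under $\tau\mapsto (a\tau+b)/(c\tau+d)$ the contour crosses the pole at $z=0$, and the resulting anomaly is governed entirely by the quasi--periodic (Weierstrass--$\zeta$--type) polar part of $\Phi$ there. Because the index is $0$, this polar part has a single modular defect, namely a quasi--period proportional to $E_2(\tau)$; hence the completion of the constant term involves $E_2$ and nothing else, producing a quasi--modular form of depth exactly $1$ and \emph{no} non--holomorphic Eichler--integral (mock) correction of the kind forced by positive index. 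Adding back the modular pieces $\sum_{j\ge 1}\Phi(j/n,\tau)$ preserves quasi--modularity, and one reads off weight $1$, depth $1$, and the group $\Gamma_0(n)$.

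The main obstacle is this last step: pinning down the polar part of $\Phi$ at $z=0$ and proving that its constant term has modular anomaly of depth exactly $1$. Concretely, I must carry out the residue computation at $z=0$, confirm via the explicit $\theta$--quotient form of $\Phi$ that only an $E_2$--contribution survives, and verify that index $0$ genuinely excludes a mock--modular shadow. The multiplier system from the $\eta$--powers, the precise level $\Gamma_0(n)$, and the consistent tracking of the half--period shift through the residue calculation in the even versus odd case all need to be checked, but the conceptual crux is the depth--$1$ (rather than mock) nature of the index--$0$ constant term.
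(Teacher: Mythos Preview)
Your endgame is right and matches the paper: express the (super)character as the constant term of a weight-$1$, index-$0$ meromorphic Jacobi form on $\Gamma_0(n)$, then read off quasi-modularity from the Laurent/finite-part formula $H^F=H_0+\sum_j\frac{B_{2j}}{(2j)!}H_{2j}E_{2j}$, in which only $E_2$ contributes depth. The problem is your construction of the Jacobi form.

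Your $\Phi(z,\tau)=\sum_{s\in{\Z}}{\rm ch}[V_s](\tau)\,\zeta^{s}$ does \emph{not} have the claimed properties. Since $h_s=\tfrac{s^2}{2n}+\tfrac{s}{2}$, each coefficient ${\rm ch}[V_s]$ carries a Gaussian factor $q^{s^2/(2n)}$ and the series converges for every $\zeta\in{\C}^{\times}$; thus $\Phi$ is \emph{entire} in $z$, not meromorphic with $1/\vartheta^n$--type poles. An entire Jacobi form of index $0$ is constant in $z$, so $\Phi$ cannot have index $0$ (it behaves instead like index $\tfrac{1}{2n}$). The root-of-unity step then collapses: there is nothing to regularize at $j=0$, and in any case identifying the ``regularized'' $\Phi(0,\tau)$ with the $\zeta^0$ coefficient would return ${\rm ch}[V_0]$, not the quantity needed for the average to produce ${\rm ch}[\mathcal U]$.

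The paper builds the Jacobi form differently. Using ${\rm ch}[V_{ns}]=q^{h_{ns}-c/24}\,{\rm Coeff}_{\zeta^{ns}}\bigl[(q)_\infty/(\zeta)_\infty^n(q\zeta^{-1})_\infty^n\bigr]$ and ${\rm Coeff}_{\zeta^{ns}}={\rm CT}_\zeta\,\zeta^{-ns}$, one obtains
\[
{\rm ch}[\mathcal U](\tau)={\rm CT}_\zeta\Bigl[\Bigl(\sum_{s\in{\Z}}q^{\frac{n}{2}s^2+\frac{n}{2}s}\zeta^{-ns}\Bigr)\cdot\frac{q^{(n+1)/24}(q)_\infty}{(\zeta)_\infty^n(q\zeta^{-1})_\infty^n}\Bigr].
\]
Because $h_{ns}=\tfrac{n}{2}s^2+\tfrac{n}{2}s$, the inserted $\zeta$-series is a genuine theta function, and the bracket becomes (up to a constant) $\eta(\tau)^{n+1}\vartheta(nz+\tfrac12,n\tau)/\vartheta(z,\tau)^n$, respectively $\eta(\tau)^{n+1}\vartheta(nz,n\tau)/\vartheta(z,\tau)^n$ for the supercharacter when $n$ is odd. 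One then checks \emph{directly} from the elliptic and modular transformation laws of $\vartheta$ that this quotient has index $0$ (the elliptic multipliers of $\vartheta(nz,n\tau)$ and $\vartheta(z,\tau)^n$ cancel exactly) and weight $1$ on $\Gamma_0(n)$ (the level enters through the argument $n\tau$). From here the Laurent analysis you outline applies verbatim and yields depth $1$.

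So keep the last part of your plan, but replace $\Phi$ and the averaging by this explicit theta quotient. The index-$0$ property is not a heuristic ``shadow of $\frak{psl}$''; it is the literal cancellation of elliptic factors between $\vartheta(nz,n\tau)$ and $\vartheta(z,\tau)^n$.
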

% One should notice that the (super)character of $U$ has weight one property already noticed for Deligne series at negative integral level \cite{AK}.
Motivated again by \cite{AK16} we conjecture that the (super)character of $\mathcal U$ is a component of a vector-valued 
modular form coming from a modular linear differential equations (MLDEs). 
Compared to Deligne's series where this differential equation is of order two, here the situation is more complicated as the order of the equation grows with $n$. We hope to return to vector-valuedness and properties of MLDE in our future publications. Here we only analyze an MLDE corresponding to  $\frak{g}=\frak{sl}(3)$ (see Proposition \ref{MLDE}).
 
The vertex algebras associated to  $\frak{psl}(n|n)$ and  $\frak{gl}(n|n)$  have attracted much attention in the literature (cf.  \cite{Ad-2017},  \cite{AKMPP-JA},  \cite{AKMPP-2018}, \cite{CG-2017},  \cite{CG-2018}). In the present paper we identify the  coset $\frak{psl}(n|n) _1 / {\frak{sl}(n) }_1$  as a vertex algebra $\mathcal U$ for $n \ge 3$. We   prove in Theorem \ref{psl3} that the super character for the simple vertex algebra $V_1(\frak{psl}(n,n))$ is for every $n \ge 2$ equal to the supercharcter of symplectic fermion vertex algebra, and equals to $\eta(\tau) ^2$.    In the case $n=3$ we present a different proof, by using we use the  (super)character of $\mathcal U$ from previous section, together with a branching rules for conformal embeddings in the case $n=3$.

  We have the following conjecture based on the analysis in the case $\frak{psl}(n|n)_1$ and results  from the paper \cite{AKMPP-JA} and \cite{AK16}.

\begin{conjecture}
For every $n \ge 0$ even  we have
$${\rm sch}[V_{-2} (\frak{osp}(n+8 \vert n) ) ](\tau)   =   {\rm ch} [V_{-2} (\frak{so}(8)) ](\tau). $$
% =\frac{ E_4 '  (\tau)}{240 \eta(\tau) ^{10}}. $$
\end{conjecture}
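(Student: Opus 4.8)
The plan is to reduce the claimed identity to a statement about the \emph{simple} quotients and then to anchor the right-hand side via its Deligne-series description. First I would record the two structural invariants that single out the family $\frak{osp}(n+8|n)$: for every even $n$ one has $h^\vee(\frak{osp}(n+8|n)) = (n+8)-n-2 = 6$ and
\[
\mathrm{sdim}\,\frak{osp}(n+8|n) = \tfrac{(n+8)(n+7)}{2} + \tfrac{n(n+1)}{2} - n(n+8) = 28,
\]
both independent of $n$ and equal to the corresponding data for $\frak{so}(8)$. Hence the Sugawara central charge is $c = \frac{-2\cdot 28}{-2+6} = -14$ for every member of the family. In particular the supercharacters of the \emph{universal} affine vertex algebras already agree, since
\[
\mathrm{sch}[V^{-2}(\frak{osp}(n+8|n))](\tau) = q^{-c/24}\prod_{j\ge 1}(1-q^j)^{-\mathrm{sdim}} = q^{7/12}\prod_{j\ge 1}(1-q^j)^{-28} = \mathrm{ch}[V^{-2}(\frak{so}(8))](\tau),
\]
the sign $(-1)^F$ converting each fermionic oscillator factor $(1+q^j)$ into $(1-q^j)$, so that the bosonic and fermionic modes combine into the single exponent $-\mathrm{sdim}$. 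Thus the genuine content of the conjecture is that passing to the simple quotients preserves this equality, i.e. that the maximal graded ideal contributes the same signed defect on both sides.

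The observation that anchors the right-hand side is that $V_{-2}(\frak{so}(8))$ is precisely the $D_4$ member of the Deligne exceptional series at level $-h^\vee/6-1 = -2$; by \cite{AK16} it is quasi-lisse and its character is the distinguished solution, with leading behaviour $q^{7/12}(1+28q+\cdots)$, of a second-order monic modular linear differential equation. I would therefore try to show that $V_{-2}(\frak{osp}(n+8|n))$ is itself quasi-lisse with matching associated-variety data, so that by the Arakawa--Kawasetsu mechanism its supercharacter solves a modular linear differential equation of the same order and weight; comparing leading exponents and normalizations (both vacuum (super)characters begin $q^{7/12}(1+28q+\cdots)$, forced by $c=-14$ and $\mathrm{sdim}=28$) would then identify the two series as the same distinguished solution.

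To produce the simple quotient together with the supertrace cancellation, the natural tool --- in the spirit of \cite{AKMPP-JA} and of the realization of $V_{-1}(\frak{sl}(n))$ employed earlier in this paper --- is a free-field / collapsing-level realization. Concretely, I would seek to realize $V_{-2}(\frak{osp}(n+8|n))$ inside a charge-zero subalgebra of a free-field algebra and, in passing from $n$ to $n+2$, to identify the added degrees of freedom as a balanced $\beta\gamma$--$bc$ system whose supertrace $\mathrm{tr}\,(-1)^F q^{L_0}$ is identically $1$ (the boson and fermion oscillators, having matching conformal weights and canceling central charges, cancel term by term). An induction on $n$ anchored at the trivial base case $n=0$, where $\mathrm{sch}=\mathrm{ch}$, would then yield the required $n$-independence; for small $n$ one could alternatively verify the decoupling through conformal embeddings and branching rules, as was done for $\frak{psl}(3|3)_1$ in Theorem \ref{psl3}. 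The main obstacle is exactly this control of the simple quotient: one must show that the free-field realization produces the \emph{simple} vertex superalgebra rather than only the universal one, and that the maximal ideal is spanned by boson--fermion pairs whose contributions cancel in the supertrace, i.e. that the collapsing/decoupling holds at the level of simple algebras and not merely formally. A secondary difficulty is establishing quasi-lisseness uniformly in $n$ and showing that the order of the governing differential equation stays bounded (second order, as for $\frak{so}(8)$) rather than growing with $n$.
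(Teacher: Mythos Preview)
The statement you are addressing is a \emph{Conjecture}, not a theorem: the paper does not prove it. It is stated twice (in the introduction and again in Section~8), noted to be ``in agreement with'' the Gorelik--Serganova work on the Duflo--Serganova functor, and then explicitly deferred with the sentence ``We plan to discuss a proof in our forthcoming papers.'' So there is no paper proof to compare your proposal against.

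Your write-up is not a proof either, and you are honest about this: it is a strategy outline with the key gaps flagged. Your preliminary observations are correct and are exactly the heuristic evidence behind the conjecture --- the family $\frak{osp}(n+8|n)$ has $h^\vee=6$ and $\mathrm{sdim}=28$ independently of $n$, so the Sugawara central charge is $-14$ throughout and the supercharacters of the \emph{universal} affine vertex superalgebras already coincide with $\mathrm{ch}[V^{-2}(\frak{so}(8))]$. You correctly identify that the entire content of the conjecture lies in controlling the passage to the simple quotient, and that this is the obstacle. The free-field/collapsing-level strategy you sketch (adding a balanced $\beta\gamma$--$bc$ pair in passing from $n$ to $n+2$, with supertrace~$1$) and the Arakawa--Kawasetsu quasi-lisse mechanism are reasonable lines of attack and are indeed in the spirit of the techniques the paper uses for $\frak{psl}(n|n)_1$; but as you yourself say, showing that the realization lands in the \emph{simple} algebra and that quasi-lisseness holds uniformly with bounded MLDE order are genuine open problems, not steps you have carried out. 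In short: your diagnosis of what needs to be proved is accurate, but what you have written is a roadmap, not a proof, and the paper offers no proof to measure it against.
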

 
We should also mention that the vertex algebra $V_{-2} (\frak{osp}(n+8 \vert n) )$ has recently appeared in the work of K. Costello and D. Gaiotto \cite[Section 5]{CG-2018} in the context of $SU(2)$-gauge theory with $N \ge 4$ flavors.

{\bf Acknowledgments:} 
%We dedicate this paper to Mirko Primc from whom we first learned first of the theory of vertex algebras.
We would like to thank T. Creutzig and M. Gorelik   for valuable  discussions.

D.A. is  partially supported by the Croatian Science Foundation under the project 2634 and by the
QuantiXLie Centre of Excellence, a project cofinanced
by the Croatian Government and European Union
through the European Regional Development Fund - the
Competitiveness and Cohesion Operational Programme
(KK.01.1.1.01). A.M. was partially supported by the NSF Grant DMS-1601070.

   \section{The affine  vertex algebra $V_{-1} (\frak{sl}(n)  )$ } 
   In this section we recall the basic properties of  the affine vertex algebra $V_{-1} (\frak{sl}(n)  )$. Here we use the standard notation: $V^k(\frak{g})$ denotes the universal vertex 
   algebra of level $k$ and $V_k(\frak{g})$ is the corresponding simple vertex algebra. All affine vertex algebras are equipped with the usual conformal structure (via Sugawara's construction).
      \label{realization-sln}
   \subsection{Symplectic fermions and  the $c=-2$ singlet vertex algebra}
   The symplectic fermion vertex algebra  $\mathcal A (n) $  (see \cite{Ab} for more details) is the universal vertex superalgebra  generated by odd fields/vectors  $b_i$ and $c_i$ $(i =1 , \dots,n)$  with the following non-trivial  $\lambda$--bracket 
$$ [ (b_i)  _{\lambda} c_j ] = \delta_{i,j} \lambda.  $$
 $\mathcal A (n)$ can be realized  on the  irreducible  level one module for the Lie superalgebra with generators
$$\{ K, b_i(n), c_i(n), n \in {\Z} \}  $$ and  relations
\bea  
 && \{ b_i (n), b_j (m) \}  = \{ c_i (n), c_j (m)\}  = 0, \quad  \{b_i  (n), c_j (m) \} = n \delta_{i,j} \delta_{n+m,0} K.   \nonumber  
%&&  \ \ \ \ \ \ \ \ \ \ \ \ \ \ \  \ \ \ \ \ \ \ \ \ \ \ \ \ \ \ \ \ \ \ \ \ \ \ \ K   \ \ \mbox{is \ \ central.}  \nonumber 
\eea
Here $K$ is central and other super-commutators are trivial. As a vector space, 
$$\mathcal A(n)  = \bigwedge {\rm span} \left\{ b_i(-m), c_i(-m), \ m \in {\Z}_{>0}, i=1, \dots, n  \right\}. $$
The fields  $b_i$,  $c_i$  can be identified as formal Laurent series  acting on $\mathcal A(n)$.
$$b_i(x) = \sum_{n \in {\Z}} b_i (n) x^{-n-1}, \quad  c_i(x) = \sum_{n \in {\Z}} c_i (n) x^{-n-1} $$

The vertex algebra $\mathcal A (n) $  has the following Virasoro element of central charge $c=-2 n$:
$$\omega_{\mathcal A(n)}  = \sum_{i=1} ^n :b_i c_i :. $$
There is a charge operator $J\in \mbox{End} (\mathcal A (n) )$ such that
$$[J, b_i (n)] = b_i(n), \quad [J, c_i (n)] = - c_i(n) $$
 which defines on $\mathcal A(n)$ the   $\Z$--gradation: 
$$\mathcal A(n) = \sum_{\ell \in {\Z}} \mathcal A(n) ^{(\ell)},   \quad  \mathcal A(n) ^{(\ell) } = \{ v \in \mathcal A(n) \ \vert \ J v = \ell v \}. $$

The vertex  algebra $\mathcal A(1) ^{(0)}$ is isomorphic to the singlet vertex algebra $\overline{M(1)}$  of central charge $c=-2$ (cf. \cite{Wa}, \cite{Ad-2003}).
For every $i \in \{0, \dots, n\}$  we set $\mathcal G _i ^0 =  1 $, and for $ m \in {\Z_{\ge 1}}$ we define
$$ {\mathcal G}_i ^m = b_i(-m)  \cdots b_i(-1) , \quad  {\mathcal G}_i ^{-m} = c_i(-m)  \cdots c_i(-1) . $$
Each  $ u_r = {\mathcal G}_i ^r . {\bf 1} $, $r \in {\Z}$,  is a singular vector for the singlet vertex algebra, which generates an irreducible module $\pi_r$ (note that we drop the index $i$).

It was proven in \cite{AdM-2017} that modules are simple current $\mathcal A(1) ^{(0)}$--modules with the following fusion rules:
$$ \pi_{r} \times \pi_s = \pi_{r+s}. $$

\subsection{The Clifford vertex algebra}
 The Weyl vertex algebra $F_{(n)}$ is the universal vertex algebra  generated by the   odd fields ${\Psi}_i ^{\pm} $ and the following  non-trivial $\lambda$--bracket:
$$ [ (\Psi_i ^+ )_{\lambda} \Psi_j ^-] = \delta_{i,j}, \quad (i,j = 1\dots, n). $$
  The vertex algebra $F_{(n)}$    has the structure  of the irreducible  level one  module for Clifford algebra  with generators
$ \{ K, \Psi^{\pm} (n+ 1/2) \ \vert  \ n \in {\Z} \}$ and super-commutation relations:
$$\{ \Psi_i^+ (r), \Psi_j ^- (s) \}  =\delta_{i,j} \delta_{r+s,0} K , \quad  \{\Psi_i^{\pm}  (r), \Psi_j ^{\pm}  (s) \} =0 \quad (r,s \in \tfrac{1}{2} + \Z, \ i, j = 1, \dots, n), $$
where $K$ is central element. The fields $\Psi^{\pm} _i $  acts on $F_{(n)} $ as the following Laurent series
$$\Psi  ^{\pm} (z)  = \sum_{ n \in {\Z} } \Psi ^{\pm} (n+\frac{1}{2})  z^{-n-1}.$$

\subsection{The Weyl vertex algebra and  its bosonization.} 
%{\bf Ovdje nesto definicije ne stimaju} 
The Weyl vertex algebra $W_{(n)}$ is the universal vertex algebra  generated by the  even fields ${a}_i ^{\pm} $ and the following  non-trivial $\lambda$--bracket:
$$ [ (a_i ^+ )_{\lambda} a_j ^-] = \delta_{i,j}, \quad (i,j = 1\dots, n). $$
  The vertex algebra $W_{(n)}$    has the structure  of the irreducible  level one  module for the Lie  algebra  with generators
$ \{ K, a^{\pm} (n+ 1/2) \ \vert  \ n \in {\Z} \}$ and commutation relations:
$$[a_i^+ (r), a_j ^- (s) ] =\delta_{i,j} \delta_{r+s,0} K , \quad  [a_i ^{\pm}(r) , a_j^{\pm}(s)]=0 \quad (r,s \in \tfrac{1}{2} + \Z, \ i, j = 1, \dots, n), $$
where $K$ is central element. The fields $a^{\pm} _i $  acts on $W_{(n)} $ as the following Laurent series
$$a ^{\pm} (z)  = \sum_{ n \in {\Z} } a^{\pm} (n+\frac{1}{2})  z^{-n-1}.$$
For $i= \{1, \dots, n \}$ and $r \in {\Z}$ , we define $$X_i ^r :=  a_i ^+ (-1/2 ) ^r, \ \ r \ge 0 \ \  {\rm and} \  \ X_i ^r := a_i ^-  (-1/2)  ^{-r} {\bf 1}, \ r < 0.$$
Let $V_L = M_n (1)  \otimes {\C}[L] $ be the lattice vertex superalgebra associated to the lattice $$L = {\Z}  \varphi_1 \oplus \cdots \oplus  {\Z}  \varphi_n$$  with products:
$$\langle \varphi_i, \varphi_j \rangle = - \delta_{i,j} \ \   (i,j = 1, \dots , n). $$
 Here $M_n (1)$ denotes the level one module for the Heisenberg vertex algebra associated to the Heisenberg Lie algebra 
 $\widehat {\frak h}_n   = {\C}[t,t^{-1}] \otimes {\frak h}_n \oplus {\C} K$, where  ${\frak h}_n =  {\C} \otimes _{\Z} L$.
  
 \subsection{Realization of $V_{-1} (\frak{sl}(n))$ and its ordinary modules}
 
 %Here $M_n (1)$ denotes the level one module for the Heisenberg vertex algebra associated to the Heisenberg Lie algebra 
 %$\widehat {\frak h}_n   = {\C}[t,t^{-1}] \otimes {\frak h}_n \oplus {\C} K$, where  ${\frak h}_n =  {\C} \otimes _{\Z} L$.
 
 We have the embedding $$ W_{(n)} \rightarrow  \mathcal A(n)  \otimes V_ L $$ such that
 $$ a_i ^+ =  : b_i e^{\varphi _i } :  \quad , \quad   a_i ^{-} = - : c_i e^{-\varphi_i} : $$
 Define $c: = - (\varphi _1 + \cdots \varphi_n)$. Then $c(0)$ defines on $W_{(n)}$ the  natural ${\Z}$--gradation:
 $$ W_{(n)} = \bigoplus _{ s  \in {\Z} }  W_{(n)} ^{(\ell ) }. $$
 Let $M_c(1)$ be the Heisenberg vertex algebra of level $1$ generated by $c$. Let $M_{c} (1, r)$ be the irreducible $M_c(1)$--module  on which  $c(0)$ acts as  $r \mbox{Id}$. 
 
 The  vertex subalgebra of  $W_{(n)}^{(0)}$ generated by the vectors 
$$\{ e_{i,j} = - :a _i ^+  a _j  ^- :  \ \vert \ i,j = 1, \dots, n \} $$ is isomorphic  to the simple affine vertex algebra
$V_{-1} ({\frak gl} (n) ) $ at level $-1$ (cf. \cite{AP}).

We also have for $i \ne j$:
\bea e_{i,j} :=  : b _i  c_j e^{\varphi_i - \varphi_j} :. \label{lattice-formula} \eea
Then we have
  \begin{theorem} 
 $ W_{(n)} ^{( 0 ) } $ is a simple vertex algebra and the following holds:
 \begin{itemize}
 \item \cite{KR} For $n=1$,  $W_{(n)} ^{( 0 ) }$  is  a $\mathcal W_{1+\infty}$-algebra  at central charge $c=-1$.
  
 \item   \cite[Theorem 5.2] {CKLR} For $n =2$, $W_{(n)} ^{( 0 ) } \cong   \mathcal W \otimes M_{c} (1) $, where $\mathcal W$ is  a certain  $W$-algebra of type $W(1,1,1,2,2,2)$ at central charge $c=-3$ (conjecturally isomorphic to $W_{-5/2}(\frak{sl} (4), f_{sh})$ (cf.   \cite{CKLR}, \cite{AMP}) where $f_{sh}$ is a short nilpotent element of $\frak{sl} (4)$  
 
 %{\bf cuj, sta nisu oni to dokazali - pogledaj Remark 5.3 nakon Theorema 5.2.
 %Inace ova $W(1,1,1,2,2,2,2)$ je univerzalna). }
 
  \item  \cite{AP} For $n \ge 3$: $W_{(n)} ^{( 0 ) }  \cong V_{-1} ({\mathfrak sl}(n)) \otimes M_c(1)$.
  
  \end{itemize}
  
 \end{theorem}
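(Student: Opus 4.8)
The plan is to treat the three ranges of $n$ separately, taking the embedding of the affine currents recalled just above as the starting point, and to isolate simplicity as a structural statement independent of $n$. \emph{For simplicity:} the Weyl vertex algebra $W_{(n)}$ is simple, and $c(0)$ equips it with the $\mathbb Z$-grading $W_{(n)}=\bigoplus_{s\in\mathbb Z}W_{(n)}^{(s)}$ into $c(0)$-eigenspaces, with $W_{(n)}^{(0)}$ a vertex subalgebra containing the conformal vector. I would first check that each charge sector $W_{(n)}^{(s)}$ is a nonzero irreducible $W_{(n)}^{(0)}$-module, so that the charge decomposition exhibits $W_{(n)}$ as a $\mathbb Z$-graded (simple current) extension of $W_{(n)}^{(0)}$; simplicity of $W_{(n)}^{(0)}$ then follows from simplicity of $W_{(n)}$ by the standard argument that a nonzero ideal of $W_{(n)}^{(0)}$ would generate a proper nonzero ideal of $W_{(n)}$. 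Equivalently, decomposing $W_{(n)}$ under the Heisenberg subalgebra $M_c(1)$ generated by $c$ yields $W_{(n)}^{(0)}\cong M_c(1)\otimes \mathrm{Com}(M_c(1),W_{(n)})$, and simplicity of the commutant propagates to the tensor product.

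\emph{The case $n\ge 3$.} Here I would begin from the embedding $V_{-1}(\mathfrak{gl}(n))\hookrightarrow W_{(n)}^{(0)}$ generated by the currents $e_{i,j}$. The trace current $\sum_i e_{i,i}$, which generates $M_c(1)$, is a non-degenerate Heisenberg field commuting with the tracefree currents $e_{i,j}-\tfrac{\delta_{i,j}}{n}\sum_k e_{k,k}$, so it splits off a factor and one obtains $V_{-1}(\mathfrak{gl}(n))\cong V_{-1}(\mathfrak{sl}(n))\otimes M_c(1)$. It then remains to prove that this embedding is onto, i.e.\ that $W_{(n)}^{(0)}$ is generated by its weight-one subspace $\{e_{i,j}\}$. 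For this I would compare graded dimensions: the character of $W_{(n)}^{(0)}$ is the constant term
$$\mathrm{ch}[W_{(n)}^{(0)}](q)=\mathrm{CT}_z\ \prod_{k=0}^{\infty}\frac{1}{(1-z q^{k+1/2})^n(1-z^{-1}q^{k+1/2})^n},$$
and I would match it against $\mathrm{ch}[V_{-1}(\mathfrak{sl}(n))](q)\cdot (q)_\infty^{-1}$, the second factor being the Heisenberg character. Since $V_{-1}(\mathfrak{sl}(n))\otimes M_c(1)$ is simple and sits inside the simple algebra $W_{(n)}^{(0)}$, equality of characters forces equality of the two algebras.

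\emph{The cases $n=1,2$ and the main obstacle.} For $n=1$ the charge-zero subalgebra is the full $\mathcal{W}_{1+\infty}$ at $c=-1$, realized on the single $\beta\gamma$-pair by the fields $:a^+\partial^k a^-:$, which I would quote from \cite{KR}; for $n=2$ the extra weight-two vectors organize into the $W(1,1,1,2,2,2)$ algebra $\mathcal W$ at $c=-3$ and one records $W_{(2)}^{(0)}\cong\mathcal W\otimes M_c(1)$ following \cite[Theorem 5.2]{CKLR}. The real difficulty, and the crux of the $n\ge 3$ statement, is precisely the surjectivity step, because it is genuinely rank-dependent: for $n\le 2$ the weight-two charge-zero vectors survive as honest new generators, whereas for $n\ge 3$ the richer non-abelian OPEs among the $e_{i,j}$ (with three or more indices available) must express every higher-weight charge-zero vector as a normally ordered composite of the currents, the reductions in polynomial degree coming from the Wick contractions hidden in the normally ordered products $:e_{i,k}e_{k,j}:$. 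Proving this requires the explicit description of the maximal ideal of $V^{-1}(\mathfrak{sl}(n))$ and of its singular vectors at level $-1$, which is the technical heart carried out in \cite{AP}; the character identity above is then the clean bookkeeping that certifies the identification.
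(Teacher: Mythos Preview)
The paper does not supply a proof of this theorem: each bullet carries its own citation (\cite{KR}, \cite{CKLR}, \cite{AP}) and no argument is given in the text. There is thus no ``paper's own proof'' to compare against; the authors are simply recording known results for later use.

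Your sketch is a reasonable outline of how one might argue, and you correctly isolate simplicity of $W_{(n)}^{(0)}$ from simplicity of $W_{(n)}$ via the charge grading. For $n\ge 3$ your character-matching route is a plausible strategy in principle, but note that it presupposes an independent closed formula for $\mathrm{ch}[V_{-1}(\mathfrak{sl}(n))]$, which is itself a nontrivial input (equivalent to knowing the maximal submodule of $V^{-1}(\mathfrak{sl}(n))$); you correctly flag this at the end, so the logical weight still rests on the structural analysis in \cite{AP} rather than on the $q$-series bookkeeping. The $n=1,2$ cases you treat exactly as the paper does, by citation.
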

 
     We need the following result on fusion rules.
     \begin{proposition}  \cite{AP}
     Assume that $n \ge 3$.
  For $s \in \Z_{\ge 0}$, let
  $$V _s := L_{\frak{sl} (n)} (-(1+s)  \Lambda _0 + s \Lambda_1), \quad V_{-s} := L_{\frak{sl} (n)} (-(1+s)  \Lambda _0 + s \Lambda_{n-1}). $$
  \begin{itemize}
  \item The set $\{ V _s \ \vert \ s \in {\Z} \}$ provides a complete list of irreducible $V_{-1} (\frak{sl} (n))$ modules in the category $KL_{-1}$ (= the category  of ordinary modules).
  \item The following fusion rules hold in the category $KL_{-1}$.
 \bea \label{fusion-rules} V_{s_1} \times V _{s_2} =  V _{s_1 + s_2} \qquad (s_1, s_2 \in \Z). \eea
  \end{itemize}
  \end{proposition}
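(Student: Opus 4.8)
The plan is to deduce both assertions from the free-field realization set up above, with a Zhu-algebra analysis supplying completeness and the upper bound on fusion.

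I would first realize the $V_s$ explicitly. The charge operator $c(0)$ decomposes the simple Weyl vertex algebra as $W_{(n)}=\bigoplus_{s\in\Z}W_{(n)}^{(s)}$, and a standard simple-current argument (using simplicity of $W_{(n)}$) shows each charged component is an irreducible module over the fixed-point subalgebra $W_{(n)}^{(0)}$. Through the isomorphism $W_{(n)}^{(0)}\cong V_{-1}(\frak{sl}(n))\otimes M_c(1)$ for $n\ge 3$, I would identify $W_{(n)}^{(s)}\cong V_s\otimes M_c(1,s)$, the Heisenberg factor absorbing the $c$-charge. Reading off the lowest conformal-weight space, the top of $V_s$ is spanned by the charge-$s$ monomials $X_1^{r_1}\cdots X_n^{r_n}$, i.e. $\mathrm{Sym}^{s}(\C^n)$ for $s\ge 0$ and $\mathrm{Sym}^{|s|}((\C^n)^{*})$ for $s\le 0$; these carry $\frak{sl}(n)$-highest weights $s\omega_1$, respectively $|s|\omega_{n-1}$, matching $-(1+s)\Lambda_0+s\Lambda_1$ and $-(1+|s|)\Lambda_0+|s|\Lambda_{n-1}$. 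As each $V_s$ has finite-dimensional weight spaces and conformal weight bounded below, it lies in $KL_{-1}$.

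Completeness is the first delicate point. Let $N$ be any irreducible object of $KL_{-1}$; its lowest-weight space $N_{\mathrm{top}}$ is a finite-dimensional irreducible module over Zhu's algebra $A(V_{-1}(\frak{sl}(n)))$. Since $A(V^{-1}(\frak{sl}(n)))\cong U(\frak{sl}(n))$, we have $A(V_{-1}(\frak{sl}(n)))\cong U(\frak{sl}(n))/I$, where $I$ is the two-sided ideal generated by the image under the Zhu map of the singular vector(s) generating the maximal ideal of $V^{-1}(\frak{sl}(n))$ at level $-1$. Computing these singular vectors --- which are of low conformal weight and quadratic in the currents $e_{i,j}$ --- together with their images, the resulting relations constrain the Casimir eigenvalue and the $\frak h$-weight of $N_{\mathrm{top}}$ so strongly that its highest weight must be $s\omega_1$ or $|s|\omega_{n-1}$. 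These are precisely the symmetric and co-symmetric powers already realized, so $N\cong V_s$ for some $s\in\Z$.

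For the fusion rules, the free-field picture furnishes the construction: the product on $W_{(n)}$ satisfies $W_{(n)}^{(s_1)}\cdot W_{(n)}^{(s_2)}\subseteq W_{(n)}^{(s_1+s_2)}$, and projecting onto the $V_{-1}(\frak{sl}(n))$-factor yields a nonzero intertwining operator of type $\binom{V_{s_1+s_2}}{\,V_{s_1}\ V_{s_2}\,}$ (concretely built from the lattice expression $e_{i,j}=\,:b_ic_je^{\varphi_i-\varphi_j}:$ and the charged vertex operators). In particular each $V_s$ is a simple current with inverse $V_{-s}$, which already forces the group-like pattern $V_{s_1}\times V_{s_2}=V_{s_1+s_2}$. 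To exclude further fusion channels I would invoke the Frenkel--Zhu bound $N_{s_1,s_2}^{\,s_3}\le\dim\mathrm{Hom}_{A}\!\big(A(V_{s_1})\otimes_{A}(V_{s_2})_{\mathrm{top}},\,(V_{s_3})_{\mathrm{top}}\big)$, and computing the Zhu bimodule $A(V_{s_1})$ shows this space is one-dimensional exactly when $s_3=s_1+s_2$ and zero otherwise. The two Zhu-theoretic computations are the crux: determining the defining ideal $I$ of $A(V_{-1}(\frak{sl}(n)))$ from the singular vectors, and evaluating the bimodule $A(V_{s_1})$ to close the fusion bound. Both reduce to finite but delicate calculations in $U(\frak{sl}(n))$; by contrast, irreducibility of the charged components, the branching under $W_{(n)}^{(0)}$, and the construction of the intertwiners are formal consequences of the simplicity of $W_{(n)}$ and its charge grading.
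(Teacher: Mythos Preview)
The paper does not actually prove this proposition: it is stated with the citation \cite{AP} and no proof environment follows it; the result is imported wholesale from Adamovi\'c--Per\v{s}e. So there is no ``paper's own proof'' to compare against here.

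That said, your outline is precisely the strategy of \cite{AP}. The classification goes through Zhu's algebra: one computes the image in $A(V_{-1}(\frak{sl}(n)))\cong U(\frak{sl}(n))/I$ of the conformal-weight-two singular vector generating the maximal submodule at level $-1$, and the resulting relations in $U(\frak{sl}(n))$ force the highest weight of any finite-dimensional irreducible top space to be $s\omega_1$ or $s\omega_{n-1}$. The fusion rules are obtained exactly as you describe, by combining the explicit intertwiners coming from the $\Z$-grading of $W_{(n)}$ with the Frenkel--Zhu upper bound via the bimodules $A(V_s)$. Your identification of the two ``delicate'' computations (the ideal $I$ and the bimodule structure) is accurate; these are carried out in detail in \cite{AP} and are the substantive content. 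One small caution: the simple-current property of the $V_s$ is a \emph{conclusion} of the Frenkel--Zhu bound rather than an independent input, so you should not invoke it before that bound is established.
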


Let us now present a realization of irreducible $V_{-1}(\frak{sl} (n))$--modules. Let  
$$ Q_n = \{ z_1 \varphi_1 + \cdots + z_n \varphi_n  \ \vert \ z_1 + \cdots + z_n = 0 \}. $$ 
Since $V_{-1} (\frak{sl} (n)) \subset \mathcal A(n)  \otimes V_{Q_n}$, we have that for every $\lambda \in Q_n ^0$ (= the dual lattice of $Q_n$), $\mathcal A(n) \otimes V_{\lambda + Q_n}$ is a $V_{-1} (\frak{sl}(n))$--module.
Let $$\omega_1 = \frac{1}{n} ((n-1)  \varphi_1 - \varphi_2 - \cdots -  \varphi_n), \quad \omega_{n-1}= \frac{1}{n} ( \varphi_1 +  \cdots + \varphi_{n-1}  - (n-1)  \varphi_n). $$

We set $v^{(0)} ={\bf 1}$.  For $i  \in {\Z}_{> 0}$  we define
   \bea  v^{ (j)} & = &  b_1(-  j  ) \cdots  b_1 (-1)  {\bf 1} \otimes e^{  j    \omega_1}  \nonumber \\
 v^{  (- j )} & = &  c_n(- j )  \cdots c_n (-1) {\bf 1}  \otimes e^{  j  \omega_{n-1}}  \nonumber \eea

\begin{proposition} \label{real-mod} For $s \in {\Z}$ we have:
$$V _s  \cong V_{-1} (\frak{sl} (n)). v^{(s)}. $$
\end{proposition}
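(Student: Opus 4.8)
The plan is to show that $v^{(s)}$ is a singular vector for the affine Lie algebra $\widehat{\frak{sl}}(n)$ at level $-1$ with highest weight $\Lambda_s := -(1+s)\Lambda_0 + s\Lambda_1$ (for $s>0$; the case $s<0$ is completely parallel, using $c_n$, $\omega_{n-1}$ and $\Lambda_{n-1}$ in place of $b_1$, $\omega_1$ and $\Lambda_1$, while $s=0$ is trivial since $v^{(0)}=\mathbf 1$ generates $V_{-1}(\frak{sl}(n))=V_0$). Once this is established, $M:=V_{-1}(\frak{sl}(n)).v^{(s)}$ is a highest weight module of highest weight $\Lambda_s$, and it only remains to identify it with the irreducible module $V_s$.

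Since $v^{(s)}$ lies in the $V_{-1}(\frak{sl}(n))$-module $\mathcal A(n)\otimes V_{s\omega_1+Q_n}$, the central element $K$ acts on it as the Sugawara level $-1$, so it suffices to compute the finite weight and to check annihilation by the Chevalley generators. I would first read off the $\frak h$-weight: the fermionic charge carried by $b_1(-s)\cdots b_1(-1)\mathbf 1$ together with the lattice momentum $e^{s\omega_1}$ combine---exactly as dictated by the definition of $\omega_1$---to give the weight $s\omega_1$, where $\omega_1$ is the first fundamental weight of $\frak{sl}(n)$ (one checks $\langle\omega_1,\alpha_j\rangle=\pm\delta_{1,j}$ for the simple roots $\alpha_j=\varphi_j-\varphi_{j+1}$ using $\langle\varphi_i,\varphi_j\rangle=-\delta_{i,j}$). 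Because all comarks of $A_{n-1}^{(1)}$ equal $1$, the two requirements ``finite weight $s\omega_1$'' and ``level $-1$'' force the affine highest weight to be $\Lambda_s=-(1+s)\Lambda_0+s\Lambda_1$.

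Next I would verify annihilation of $v^{(s)}$ by the affine Chevalley generators $e_0,e_1,\dots,e_{n-1}$. For the finite generators $e_i=e_{i,i+1}(0)$ ($1\le i\le n-1$) this is immediate from \eqref{lattice-formula}: the factor $c_{i+1}$ in $e_{i,i+1}=\,:b_i c_{i+1}e^{\varphi_i-\varphi_{i+1}}:$ has no $b_{i+1}$ to contract against in $v^{(s)}$ and annihilates the vacuum in nonnegative modes, so $e_{i,i+1}(0)v^{(s)}=0$ (consistent with $s\omega_1$ being dominant). The crucial case is the affine generator $e_0=e_{n,1}(1)$ with $e_{n,1}=\,:b_n c_1 e^{\varphi_n-\varphi_1}:$. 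Here the factor $c_1$ \emph{does} contract with the modes $b_1(-1),\dots,b_1(-s)$ via the canonical anticommutator $\{c_1(m),b_1(-m)\}=-mK$, while the lattice operator $e^{\varphi_n-\varphi_1}$ shifts the momentum and contributes a definite power of the formal variable governed by $\langle\varphi_n-\varphi_1,s\omega_1\rangle=s$. I expect this to be the main obstacle: one must expand $Y(e_{n,1},z)v^{(s)}$, extract the coefficient of $z^{-2}$ (the mode $e_{n,1}(1)$), and check that the contributions of the various contractions cancel. The point is that the consecutive modes $b_1(-1)\cdots b_1(-s)$ are tuned precisely so that this coefficient vanishes; this is the free-field incarnation of the fact that $\Lambda_s$ is a genuine (non-integrable, since $\langle\Lambda_s,\alpha_0^\vee\rangle=-(1+s)<0$) highest weight at level $-1$.

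Finally, having shown that $v^{(s)}$ is singular of weight $\Lambda_s$, I would identify $M$ with $V_s$. As a highest weight module $M$ is ordinary (Sugawara $L(0)$ is bounded below on it with finite-dimensional eigenspaces), so by the classification of ordinary irreducibles \cite{AP} its irreducible quotient is $V_s=L_{\frak{sl}(n)}(\Lambda_s)$. To upgrade this to $M\cong V_s$ it remains to prove that $M$ is irreducible, and for this I would invoke the realization $W_{(n)}^{(0)}\cong V_{-1}(\frak{sl}(n))\otimes M_c(1)$ together with the fact that $X_1^s=a_1^+(-1/2)^s\mathbf 1$ generates the charge sector $W_{(n)}^{(s)}$, which is irreducible over $W_{(n)}^{(0)}$ because $W_{(n)}$ is simple. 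Using $\varphi_1=\omega_1-\tfrac1n c$, the vector $X_1^s$ factors as $v^{(s)}$ tensored with a highest weight vector of the Heisenberg factor $M_c(1)$; hence irreducibility of $W_{(n)}^{(s)}$ forces the $V_{-1}(\frak{sl}(n))$-tensor factor to be an irreducible module, which contains $v^{(s)}$ as its highest weight vector and therefore equals $M$. This yields $V_{-1}(\frak{sl}(n)).v^{(s)}\cong V_s$.
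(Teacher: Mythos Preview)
Your proposal is correct and follows essentially the same approach as the paper: first verify that $v^{(s)}$ is a singular vector of the correct highest weight (the paper states this in one line, while you spell out the weight computation and the Chevalley-generator checks), and then deduce irreducibility by invoking the bosonization $W_{(n)}^{(0)}\cong V_{-1}(\frak{sl}(n))\otimes M_c(1)$ together with the simplicity of $W_{(n)}$, so that $W_{(n)}^{(s)}\cong \widetilde U_s\otimes M_c(1,s)$ forces $\widetilde U_s$ to be irreducible. Your factorization $X_1^s=v^{(s)}\otimes(\text{Heisenberg h.w.v.})$ via $\varphi_1=\omega_1-\tfrac1n c$ is exactly the content of the paper's phrase ``by using the bosonization of the Weyl vertex algebra.''
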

 \begin{proof}
 First we notice that $v^{(s)}$ is a singular vector for $\widehat{sl(n)}$. Then   $\widetilde U _s  = V_{-1} (\frak{sl}(n)). v^{(s)} $ is a highest weight $V_{-1} (\frak{sl} (n))$--module, having the same highest weight as $U_s$.
 By using the bosonization of the Weyl vertex algebra, we show that  as a $V_{-1}(\frak{gl}(n)) = V_{-1} (\frak{sl} (n)) \otimes M_c(1)$--module
 $ W_{(n)} ^{(s)} \cong \widetilde U _s \otimes M_c(1, s)$. 
 Since $W_{(n)} ^{(s)}$ is irreducible $V_{-1}(\frak{gl} (n))$--module, we conclude that $\widetilde U _s$ is irreducible  $V_{-1} (\frak{sl} (n))$--module, and thus $\widetilde U _s \cong U_s$.
 \end{proof}

\section{ Parafermionic algebra and the vacuum of $V_{-1}(\frak{sl} (n))$ }

%\subsection{Parafermionic algebra  $K(sl(n),-1)$} 
Recall the definition of the parafermion vertex algebra of level $k$:
 $$ K(\g, k) := \{ v \in V_k(\g) \  \vert \   ({\frak h} \otimes t ^{m} ). v = 0, \ m \in {\Z}_{\ge 0} \}.$$
 % Note that $$ K(sl(n) , - 1) \subset \Omega_n. $$
 \begin{theorem} Assume that $ n \ge 3$. Then 
 $$K(\frak{sl}(n), -1) \cong (  \mathcal A(1) ^ 0 )^{\otimes n}. $$
 \end{theorem}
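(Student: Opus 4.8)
The plan is to exploit the explicit lattice/symplectic-fermion realization already established for $V_{-1}(\frak{sl}(n))$ and to identify the parafermionic subalgebra with a tensor product of $n$ singlet pieces, one for each fermion pair $(b_i,c_i)$. First I would recall that the Heisenberg subalgebra $M(1)$ inside $V_{-1}(\frak{sl}(n))$ is generated by the Cartan elements $h_i = e_{i,i}-e_{i+1,i+1}$ (or equivalently, after passing to the $\frak{gl}(n)$ picture, by the commuting currents $\alpha_i := {:}b_ic_i{:} - {:}b_{i+1}c_{i+1}{:}$ coming from $W_{(n)}^{(0)}$), so that $K(\frak{sl}(n),-1)$ is precisely the joint kernel of the nonnegative modes of these currents. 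The key observation is that under the embedding $W_{(n)}\hookrightarrow \mathcal A(n)\otimes V_L$ the Heisenberg field attached to the $i$-th Cartan direction is, up to the lattice factor $\varphi_i$, exactly the charge current $J_i = {:}b_ic_i{:}$ of the $i$-th symplectic fermion pair; annihilation by its nonnegative modes on a charge-zero vector cuts each tensor factor down to the charge-zero subalgebra $\mathcal A(1)^{(0)}$, which by the cited results of Wang and Abe is the singlet $\overline{M(1)}$.

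Concretely, the main steps are as follows. I would start from the decomposition $V_{-1}(\frak{sl}(n))\otimes M_c(1)\cong W_{(n)}^{(0)}$ of Theorem (the $n\ge 3$ case) and tensor with $M_c(1)$ to reduce the computation of the parafermion coset to a computation inside the charge-zero Weyl algebra. Next I would show that the full Heisenberg rank inside $V_{-1}(\frak{gl}(n))$ is spanned by the $n$ currents $J_i={:}a_i^+a_i^-{:}$, and that under bosonization $J_i$ maps to ${:}b_ic_i{:}+\varphi_i(-1)$-type expressions whose nonnegative modes separate into the $i$-th symplectic fermion charge current plus the $i$-th Heisenberg lattice current. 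Imposing $a(m)v=0$ for all $m\ge 0$ and all $a\in M(1)$ then forces $v$ to be annihilated by the nonnegative modes of every $J_i$ and to carry trivial $\varphi_i$-momentum simultaneously, which confines $v$ to $\bigotimes_{i=1}^n \mathcal A(1)^{(0)}$. Finally I would invoke the identification $\mathcal A(1)^{(0)}\cong\overline{M(1)}$ recalled in the previous section to conclude the stated isomorphism, checking that the vertex-algebra structure respects the tensor decomposition because the fermion pairs $(b_i,c_i)$ for distinct $i$ mutually supercommute.

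The main obstacle I anticipate is bookkeeping the interaction between the two gradings: the symplectic-fermion charge grading (from the operator $J$) and the lattice grading coming from $V_L$, since the affine Cartan currents mix both. The subtlety is that the parafermion coset is defined relative to the \emph{$\frak{sl}(n)$} Heisenberg algebra, which has rank $n-1$, whereas the natural realization carries the rank-$n$ $\frak{gl}(n)$ Heisenberg algebra $M_c(1)\otimes(\text{rest})$; I must verify that quotienting out the extra Heisenberg factor $M_c(1)$ (associated to $c=-(\varphi_1+\cdots+\varphi_n)$) exactly accounts for the discrepancy, so that killing the $\frak{sl}(n)$-Cartan modes together with the charge constraint lands one in all $n$ factors of $\mathcal A(1)^{(0)}$ rather than only $n-1$ of them. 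Establishing this rank-matching cleanly — most likely by an explicit character or graded-dimension comparison between $K(\frak{sl}(n),-1)$ and $\overline{M(1)}^{\otimes n}$ to rule out any residual factor — is where the real care is needed, and a character computation using the known characters of $\overline{M(1)}$ and of the ordinary modules $V_s$ would serve as the decisive confirmation.
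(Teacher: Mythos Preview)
Your overall strategy matches the paper's: both reduce to the $\frak{gl}(n)$ picture via $V_{-1}(\frak{sl}(n))\otimes M_c(1)\cong W_{(n)}^{(0)}$ and then compute the commutant of the rank-$n$ Heisenberg there, using the splitting $M_n(1)=M_{n-1}(1)\otimes M_c(1)$ to match the $\frak{sl}(n)$-parafermion with the full $\frak{gl}(n)$-commutant. The rank-matching subtlety you flag in your last paragraph is exactly the point the paper handles with the chain $\mbox{Com}(M_n(1),W_{(n)})\cong \mbox{Com}(M_n(1),V_{-1}(\frak{gl}(n)))\cong K(\frak{sl}(n),-1)$.

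The genuine difference is that the paper does \emph{not} carry out the bosonization analysis you outline to identify $\mbox{Com}(M_n(1),W_{(n)})$ with $(\mathcal A(1)^0)^{\otimes n}$; it simply quotes Linshaw's Theorem~7.2 in \cite{L-JPAA} for this isomorphism. Your proposal to prove this directly by tracking how each current $J_i={:}a_i^+a_i^-{:}$ bosonizes is plausible, and the containment $(\mathcal A(1)^0)^{\otimes n}\subset \mbox{Com}(M_n(1),W_{(n)})$ is indeed easy. But the reverse inclusion --- that nothing else survives --- is the nontrivial content of Linshaw's result, proved there via classical invariant theory for Howe pairs. Your sketch ``confines $v$ to $\bigotimes_{i=1}^n\mathcal A(1)^{(0)}$'' elides this: a vector in $W_{(n)}\subset \mathcal A(n)\otimes V_L$ annihilated by the Heisenberg modes need not a priori be a pure tensor in the fermion factors, so you would have to argue carefully that no mixed terms survive. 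The character comparison you mention at the end is the right way to close this gap if you want to avoid citing Linshaw, and in fact the later decomposition theorem (your $V_{-1}(\frak{sl}(n))=\bigoplus_{z\in Q_n}\pi_{z_1}\otimes\cdots\otimes\pi_{z_n}\otimes M_{n-1}(1)e^{\gamma_z}$) essentially contains it; but you should recognize that this is where the substance lies, not in the bookkeeping.
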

 \begin{proof}
 Let $M_{n-1} (1)$ (resp. $M_n(1)$ )  be the Heisenberg vertex algebra generated by the Cartan Lie subalgebra of $\frak{sl} (n)$  (resp. $\frak{gl} (n)$).  Let $\frak{gl} (n) = \frak{sl} (n) \oplus {\C} c$. As usual we identify
 $x = x_{(-1)} {\bf 1}$ for $ x\in \frak{sl} (n)$. Then $M_{n} (1) = M_{n-1} (1) \otimes M_c(1)$, where $M_c(1)$ is the  Heisenberg  vertex algebra generated by $c$.
 
 By \cite{AP}, we have for $n \ge 3$:
 $$ V_{-1} (\frak{gl}(n)) = V_{-1} (\frak{sl}(n)) \otimes M_c(1) \cong  \left( W_{(n)} \right)^0 = \mbox{Ker}_{   W^{(n) } } c (0). $$
 A. Linshaw in \cite[Theorem 7.2]{L-JPAA} proved that 
 $$\mbox{Com} (M_n(1), W^{(n)} ) \cong  (  \mathcal A(1) ^ 0 )^{\otimes n}. $$
 (This corresponds to the case $m=n$ in \cite{L-JPAA}).
Therefore for $n \ge 3$:
$$ \mbox{Com} (M_n(1), W_{(n)} ) \cong \mbox{Com} (M_{n}(1), V_{-1} (\frak{gl}(n))  ) \cong  K(\frak{sl}(n),-1).  $$
The proof follows.
 \end{proof}

\subsection{The vacuum space}

The vacuum space is defined as
$$\Omega (V_{k} (\g) ) = \{ v \in V_{k} (\g) \ \vert \ h(j) v = 0 \quad j\ge 1, \ h \in {\mathfrak h} \},$$
and it has the structure of a generalized vertex algebra \cite{Li-abelian}, \cite{DL}.  \begin{theorem}
%\item[(1)] The vertex algebra  $ \mathcal A(n) ^{(0 )}$ is strongly generated  by the fields
%$$ \{  : b_i c_j : \ \vert \ 1 \le i, j \le n \}. $$
\item[(1)] Assume that $n \ge 2$.  The vacuum algebra $$ \Omega_n := \Omega (V_{-1} (sl(n)) = \{ v \in V_{-1} (sl(n)) \ \vert \ h(j) v = 0 \quad j\ge 1, \ h \in {\mathfrak h} \}$$
is  isomorphic  to a vertex subalgebra  of  $ \mathcal A(n) $ generated by 
$$ \{  Z_{i,j} = : b_i c_j : \ \vert \ 1 \le i \ne j \le n \}. $$

\item[(2)]  Assume that $n \ge 3$. Then $ \Omega_n \cong \mathcal A(n) ^{(0 )} $ 

\item[(3)]   The $q$--character of $ \Omega_n$ is given by
$${\rm ch}[\mathcal A(n) ^{(0 )} ](\tau)= q^{\frac{1}{12}} \mbox{\rm CT}_\zeta \prod _{i = 1} ^{\infty}   ( 1 + q^{i} \zeta) ^n  ( 1 + q^{i} \zeta^{-1}) ^n ,$$
where ${\rm CT}$ is the constant term.
\end{theorem}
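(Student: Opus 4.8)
The plan is to prove the three parts in order, using the realizations established earlier. For part (1), I would start from the definition of the vacuum space $\Omega_n$ inside $V_{-1}(\frak{sl}(n))$ and use the embedding $V_{-1}(\frak{sl}(n)) \subset \mathcal A(n) \otimes V_{Q_n}$ together with the explicit formula \eqref{lattice-formula}, $e_{i,j} = \, : b_i c_j e^{\varphi_i - \varphi_j}:$. The condition $h(j)v = 0$ for all $j \geq 1$ and $h \in \frak h$ selects vectors annihilated by the positive modes of the rank $(n-1)$ Cartan Heisenberg subalgebra coming from $\frak{sl}(n)$. I would argue that applying this condition strips off the lattice (exponential) part of each generator $e_{i,j}$, so that the vacuum vectors are built purely from the symplectic fermion fields $b_i, c_j$, and the generators reduce precisely to $Z_{i,j} = \, : b_i c_j :$ for $i \neq j$. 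The identification as a \emph{subalgebra} of $\mathcal A(n)$ generated by these $Z_{i,j}$ then follows by checking that these are exactly the surviving fields and that they generate what is left after imposing the Heisenberg-positive-mode vanishing.

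For part (2), the claim $\Omega_n \cong \mathcal A(n)^{(0)}$ for $n \geq 3$ sharpens part (1): I would show that the subalgebra generated by $\{Z_{i,j} : i \neq j\}$ is in fact all of the charge-zero component $\mathcal A(n)^{(0)}$. One inclusion is immediate since each $Z_{i,j} = \, :b_i c_j:$ carries $J$-charge $1 - 1 = 0$, so the generated algebra sits inside $\mathcal A(n)^{(0)}$. The reverse inclusion is the substantive point: I would need to verify that the off-diagonal products $:b_i c_j:$ (together with their descendants under the vertex operations) generate every charge-zero element of the symplectic fermion algebra. This is where I expect to invoke a generation/strong-generation statement for $\mathcal A(n)^{(0)}$ — likely the same Linshaw-type commutant results cited in the previous theorem, or a direct counting argument showing the graded dimensions match. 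The restriction $n \geq 3$ suggests that for small $n$ the off-diagonal generators are too few, so the argument must genuinely use having enough indices $i \neq j$.

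For part (3), the character formula, I would compute the graded dimension of $\mathcal A(n)^{(0)}$ directly from the free-field description. The full symplectic fermion space is $\bigwedge \mathrm{span}\{b_i(-m), c_i(-m) : m \geq 1,\ i = 1,\dots,n\}$, so tracking the $L(0)$-grading by $q$ and the charge $J$ by a fugacity $\zeta$, the graded character of all of $\mathcal A(n)$ is the fermionic product $q^{1/12}\prod_{i \geq 1}(1 + q^i \zeta)^n (1 + q^i \zeta^{-1})^n$, where the exponent $n$ accounts for the $n$ pairs $(b_i, c_i)$ and the $q^{1/12}$ prefactor comes from the central charge $c = -2n$ normalization (so that the vacuum anomaly contributes $-c/24 \cdot (\text{per pair})$, matching $n \cdot \tfrac{1}{12}$). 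Projecting onto charge zero is exactly extracting the constant term in $\zeta$, giving the stated formula. The main obstacle across the whole theorem is part (2)'s reverse inclusion — establishing that the off-diagonal $:b_i c_j:$ strongly generate the entire charge-zero subalgebra rather than just a proper subalgebra of it — since parts (1) and (3) are comparatively mechanical once the free-field realization and the charge grading are in hand.
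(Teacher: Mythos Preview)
Your outline tracks the paper's proof closely for parts (1) and (3), but there are two concrete points where the paper is sharper and your plan is vague.

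\medskip
\textbf{Part (1).} ``Stripping off the lattice part'' is exactly right, but the paper names the machinery that makes this precise: Li's $Z$-operator formalism (\cite[Theorem~6.4]{Li-abelian}). The generalized vertex operators
\[
Z_{i,j}(z) \;=\; E^-(-h_{i,j},z)\, e_{i,j}(z)\, E^+(-h_{i,j},z)\, z^{h_{i,j}(0)}, \qquad h_{i,j}=\varphi_i-\varphi_j,
\]
are known to generate $\Omega_n$, and plugging in the realization $e_{i,j}=\,{:}b_ic_j e^{\varphi_i-\varphi_j}{:}$ collapses them to ${:}b_ic_j{:}$. Your informal argument would need this citation to be a proof.

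\medskip
\textbf{Part (2).} You correctly flag the reverse inclusion as the obstacle, but the paper's route is not a direct appeal to Linshaw's commutant theorem. Instead there is one explicit OPE computation you did not anticipate:
\[
(Z_{i,j})_1 Z_{j,i} \;=\; {:}b_ic_i{:}\,+\,{:}b_jc_j{:} \;\in\; \Omega_n .
\]
This recovers the \emph{diagonal} elements ${:}b_ic_i{:}$ from the off-diagonal generators, and is precisely where $n\ge 3$ enters: from the pairwise sums ${:}b_ic_i{:}+{:}b_jc_j{:}$ one can solve for an individual ${:}b_ic_i{:}$ only when at least three indices are available. After this step, the generation claim is the weaker statement that $\mathcal A(n)^{(0)}$ is generated by \emph{all} ${:}b_ic_j{:}$ (diagonal included), which the paper dispatches by the methods of \cite[Section~5]{CKLR}. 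Your plan, by contrast, would require the off-diagonal ${:}b_ic_j{:}$ alone to generate $\mathcal A(n)^{(0)}$ without first producing the diagonals --- a stronger statement that you have not indicated how to prove.

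\medskip
\textbf{Part (3).} Your free-fermion character computation is fine, but note the discrepancy you glossed over: the central charge is $c=-2n$, so the vacuum shift is $-c/24=n/12$, giving $q^{n/12}$ rather than the $q^{1/12}$ printed in the statement. Your parenthetical ``matching $n\cdot\tfrac{1}{12}$'' is correct; the exponent in the displayed formula appears to be a typo for $n/12$.
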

 \begin{proof}  
 
 %Recall bosonic realization of $V_{-1} (sl(n))$. 
 %
%{\bf i ovdje bi trebali dodati detalje...}{\color{blue} Da li je ovo dovoljno?}

 The proof uses 
  the explicit realization, the bosonization and the formula for $Z$--operators.
  
  By using \cite[Theorem 6.4]{Li-abelian}, we see that  $\Omega (V_{-1} (sl(n))$ is generated by the following (generalized) vertex operators
$$ Z_{i,j} (z) = Y_{\Omega} (e_{i,j}, z):= E^-( - h_{i,j}, z) e_{i,j} (z)  (z) E^+ ( - h_{i,j}, z) z^{h_{i,j} (0)} $$
where
$ h_{i,j} = \varphi_i - \varphi_j$ and 
$$ E^{\pm} (\alpha, z) = \exp  \left(   \sum_{n =1} ^{\infty}  \frac{\alpha(\pm n)}{\pm n} z^{\mp n}\right).$$
Using (\ref{lattice-formula}) we see that on  $\Omega_n=\Omega (V_{-1} (sl(n))$ we have that
$$ Z_{i,j}  = :b_i c_j: . $$

 Therefore $\Omega_n$ is generated by
$ \{  Z_{i,j} = : b_i c_j : \ \vert \ 1 \le i \ne j \le n \}. $ This proves (1).
 
 (2) 
 Since $\Omega_n$ is generated by $Z_{i,j}$, $i \ne j$ we have:
 $$u_{i,j} =  (  Z_{i,j}  )_1 Z_{j,i} =  (b_i (-1) c_j (-1) {\bf 1}  )_1 b_j (-1) c_i(-1) {\bf 1} = : b_i c_i : + :b_j c_j: \in \Omega_n. $$
 This implies  
  $$  (*)  : b_i c_ i : \in  \Omega_n$$
 Since $\Omega_n \subset \mathcal A(n) ^0$, then  (2) will   follow from the following  claim
 \begin{itemize}
 
\item [(2')] For $n \ge 3$, $\mathcal A(n) ^0$  is generated by  the set $\{ : b_i c_j: , \quad i, j = 1, \dots, n \}$.
\end{itemize}
 
 The claim (2') can be proved using completely analogue   methods to those from \cite[Section 5]{CKLR} (we omit details).

The proof of assertion (3) is clear.

 \end{proof}

\begin{remark}
In \cite{CKLR}, the authors denoted the maximal vertex operator subalgebra of the generalized vertex operator  algebra $\Omega (V_k(\g))$ by $E_{k,\g}$ (see \cite[Example 3]{CKLR}).  In our case, $\Omega (V_{-1}(sl(n)))$ is a vertex algebra, so we have
$$ E_{-1, sl(n)} \cong     \mathcal A(n) ^{(0 )}.$$
%{\color{blue} Da li \v stima ? To je tocno sto ovi kazu.}
\end{remark}
%\section{On extension of $V_{-1} (sl(n)) $ }

 One can consider the $V_{-1} (sl(n) )$--module
 $$ \mathcal U^{large} = \bigoplus_{s \in {\Z} } V_s,$$
 and show that it is a generalized vertex algebra.
 
 On the other hand, one can prove the following theorem.
 
 \begin{theorem} \label{extended-algebra}
 \item[(1)] The $V_{-1} (\frak{sl}(n)) )$--module
 $$ \mathcal U^{ (n)}  =  \bigoplus_{s \in {\Z} } V_{n s},$$
 carries the structure of a vertex operator algebra if  $n$ is even and a  ${\Z}$--graded vertex operator superalgebra if $n$ is odd.
 \item[(2)] In the category of ordinary modules,  $\mathcal U ^{(n)} $ has $n$--non-equivelent irreducible ordinary  modules:
 $$\mathcal U_i :=  \bigoplus_{s \in {\Z} } V_{n  s+ i} \quad (i= 0, \dots, ,n-1)$$
 with the following fusion rules $$\mathcal U_i \times \mathcal U_j = \mathcal U_{i+j \mod  n}.$$
 \end{theorem}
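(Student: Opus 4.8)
The plan is to use the fact, recorded in the preceding fusion-rule proposition, that the ordinary modules $\{V_s : s \in \Z\}$ form a group of simple currents under fusion, isomorphic to $\Z$ via $V_{s_1}\times V_{s_2}=V_{s_1+s_2}$. The subfamily $\{V_{ns} : s\in\Z\}$ is then the subgroup $n\Z$, so that $\mathcal U^{(n)}=\bigoplus_s V_{ns}$ is a simple current extension of $V_0=V_{-1}(\frak{sl}(n))$ by $n\Z$, and the general theory of such extensions produces the vertex-(super)algebra structure once two numerical inputs are pinned down: integrality of the relevant conformal weights, and the statistics of the generating currents. First I would compute the conformal weights by the Sugawara formula at level $-1$ (so $k+h^\vee=n-1$) for the highest weight $|s|\omega_1$ (resp. $|s|\omega_{n-1}$), using $(\omega_1,\omega_1)=\frac{n-1}{n}$ and $(\omega_1,\rho)=\frac{n-1}{2}$; this gives $h_{V_s}=\frac{s^2}{2n}+\frac{|s|}{2}$ and hence $h_{V_{ns}}=\frac{n|s|(|s|+1)}{2}\in\Z$ for all $s$. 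Consequently every fusion exponent $h_{V_{n(s_1+s_2)}}-h_{V_{ns_1}}-h_{V_{ns_2}}$ is an integer, so the intertwining operators among the $V_{ns}$ are single-valued and $\mathcal U^{(n)}$ acquires an integral $L(0)$-grading, as a (super)algebra structure requires.

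Next I would read off the parity from the realization of Proposition \ref{real-mod}. The generator $v^{(ns)}=b_1(-ns)\cdots b_1(-1){\bf 1}\otimes e^{ns\omega_1}$ (and its conjugate for $s<0$) is built from $n|s|$ odd symplectic-fermion modes, so it is even or odd in $\mathcal A(n)\otimes V_{Q_n}$ according to the parity of $n|s|\equiv ns\pmod 2$; since $V_{-1}(\frak{sl}(n))$ is generated by the \emph{even} vectors $e_{i,j}=:b_ic_je^{\varphi_i-\varphi_j}:$, the whole summand $V_{ns}$ is homogeneous of parity $(-1)^{ns}$. Thus for $n$ even every summand is even, yielding a vertex operator algebra, while for $n$ odd the parity is $(-1)^s$, yielding a genuine $\Z$-graded vertex operator superalgebra. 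The cleanest way to actually exhibit the structure maps is to note that all $v^{(ns)}$ lie in the single coset $Q_n$ (because $n\omega_1,n\omega_{n-1}\in Q_n$), so $\mathcal U^{(n)}\subset\mathcal A(n)\otimes V_{Q_n}$ and the vertex operations are simply inherited from this ambient lattice-times-symplectic-fermion vertex superalgebra; closure of products then follows from the fusion rules together with the charge grading that isolates each summand $V_{ns}$. This establishes part (1).

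For part (2) I would apply the induction/restriction dictionary for the extension $\mathcal U^{(n)}=\bigoplus_s V_{ns}$. Any ordinary $\mathcal U^{(n)}$-module $M$ restricts to an ordinary $V_{-1}(\frak{sl}(n))$-module, hence to a direct sum of the $V_s$ by the classification; the action of the extending currents $V_{\pm n}$ intertwines $V_s$ with $V_{s\pm n}$, so an irreducible $M$ must restrict to exactly one orbit $\{V_s : s\equiv i\pmod n\}$. This produces the $n$ candidates $\mathcal U_i=\bigoplus_{s\equiv i}V_s$, $0\le i\le n-1$, each irreducible as a single simple-current orbit and pairwise inequivalent, and the fusion rule $\mathcal U_i\times\mathcal U_j=\mathcal U_{i+j\bmod n}$ descends directly from $V_{s_1}\times V_{s_2}=V_{s_1+s_2}$ upon passing to $\Z/n\Z$.

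The numerical checks above are routine; the real difficulty is justifying the simple current extension and induction theorems in the present non-finite, non-$C_2$-cofinite setting, where the category $KL_{-1}$ of ordinary modules has infinitely many simple objects and is not semisimple as a full module category. I expect the technical heart to be twofold: first, verifying associativity and consistency of the whole system of intertwining operators — equivalently the vanishing of the relevant obstruction so that a single coherent $\Z_2$-graded structure exists — which is most safely handled through the explicit embedding into $\mathcal A(n)\otimes V_{Q_n}$ rather than by abstract machinery; and second, the completeness half of (2), namely ruling out exotic ordinary $\mathcal U^{(n)}$-modules, which must rest on the classification of ordinary $V_{-1}(\frak{sl}(n))$-modules and a careful restriction argument rather than on rational-VOA results that do not apply here.
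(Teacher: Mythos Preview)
Your proposal is correct and follows essentially the same route as the paper: for (1) you ground the vertex (super)algebra structure in the explicit embedding of $\mathcal U^{(n)}$ into $\mathcal A(n)\otimes V_{Q_n}\subset \mathcal A(n)\otimes V_L$, reading the parity of $v^{(\pm n)}$ off the number of symplectic-fermion modes, and use the fusion rules \eqref{fusion-rules} to verify closure, which is exactly the paper's argument (your Sugawara computation of $h_{V_{ns}}$ is an extra check the paper bypasses by noting directly that $v^{(\pm n)}$ has conformal weight $n$). For (2) the paper's classification step is marginally more economical than yours: rather than invoking a direct-sum decomposition of $M$ over $V_{-1}(\frak{sl}(n))$, it only locates a single singular vector in the finite-dimensional top component $\Omega(M)$, hence a single submodule $V_i\subset M$, and then lets the simple currents $V_{\pm n}$ and irreducibility force $M=\mathcal U_i$.
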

 \begin{proof}
 The proof of assertion  (1) is based on the explicit realization  discussed in Section  \ref{realization-sln}.
 %Let $$\omega_1 = \frac{1}{n} ((n-1)  \varphi_1 - \varphi_2 - \cdots -  \varphi_n), \quad \omega_{n-1}= \frac{1}{n} ( \varphi_1 +  \cdots + \varphi_{n-1}  - n \varphi_n). $$
 %
 Note that $ m  n \omega_1, m n  \omega_{n-1}  \in L$ and that $e^{m n\omega_1}$ and $ e^{m n  \omega_{n-1}}$ are even vectors in $V_L$.
 
 Consider   the vertex subalgebra $\widetilde {\mathcal U}$  of   $\mathcal A(n) \otimes V_L$ generated by $U_0$ and highest weight vectors
 \bea  v^{ (n)} & = &  b_1(-n ) \cdots  b_1 (-2) b_1 (-1) \otimes e^{  n  \omega_1}  \nonumber \\
 v^{ (-n)} & = &  c_n(-n) \cdots c_n  (-2) c_n  (-1) \otimes e^{  n \omega_{n-1}} \nonumber \eea
Note that vector $v^{(\pm n ) }$ have conformal  weight $n$.  Moreover, vectors  $v^{(\pm n ) }$ are even (resp. odd) if $n$ is even (resp. odd). Therefore, $\widetilde{\mathcal U}$ is  a vertex operator algebra if $n$ is even, and a $\Z$--graded vertex operator superalgebra if $n$ is odd.

By  Proposition \ref{real-mod} we have that $m \in {\Z}_{> 0}$ modules $V_{ \pm m n }$ are realized as  $ V_{\pm  m n } = V_0 . v^{(\pm  m n )}$.
 Since $v^{(\pm  m n )} \in \widetilde {\mathcal U}$  we get that 
 $ \widetilde {\mathcal U} \supset \mathcal U^{(n)}. $     By using fusion rules  (\ref{fusion-rules}) we  see that   $\mathcal U^{(n)}$ is a vertex subalgebra of  $ \widetilde {\mathcal U}$. Since both vertex algebras are generated by  $U_0$ and $v^{(\pm   n )}$ we conclude that    $\mathcal U^{(n)} =  \widetilde {\mathcal U}$. 
 This proves the assertion (1).
 
 Let us now discuss the construction and classification of irreducible $\mathcal U^{(n)}$--modules.
 Clearly $\mathcal L_{i} = \mathcal U^{ (n)} v^{(i)} = \bigoplus_{s \in {\Z} } V_{n  s+ i} $ is an irreducible $\mathcal U ^{(n)}$--modules for $i = 0, \dots, n-1$.

 Assume that $M$ is an irreducible ordinary module for $\mathcal U^{(n)}$. Then $M$ is in the category $KL_{-1}$ as  a $V_{-1}(sl(n))$--module. Since the top component $\Omega (M)$ is a finite-dimensional module for $U(sl(n))$, we conclude that $\Omega(M)$ contains a singular vector for $\widehat{sl(n)}$. Thus, $M$ contains a $V_{-1}(sl(n))$--submodule isomorphic to  $V_i$ for certain  $i \in {\Z}$. By using the   fusion rules  (\ref{fusion-rules})  again, we conclude that 
 $ M \cong \bigoplus_{s \in {\Z} } V_{n  s+ i}  = \mathcal U_i $. 
 The proof follows.
 \end{proof}
 
 \begin{conjecture} \label{quasi-lise}
 The vertex algebra $ \mathcal U^{ (n)}$ is quasi-lisse in the sense of \cite{AK16}.
 \end{conjecture}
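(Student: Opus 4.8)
The goal is to show that the associated variety $X_{\mathcal U}=\operatorname{Specm} R_{\mathcal U}$, where $R_{\mathcal U}=\mathcal U/C_2(\mathcal U)$ carries its natural Poisson structure, has only finitely many symplectic leaves; this is the defining property of quasi-lisseness in \cite{AK16}. Note first that the features already established are exactly those one expects of a quasi-lisse algebra: $\mathcal U$ has precisely $n$ ordinary irreducible modules (Theorem \ref{extended-algebra}), in sharp contrast to the infinitely many ordinary modules $\{V_s\}_{s\in\Z}$ of $V_{-1}(\frak{sl}(n))$, and its (super)characters are quasi-modular of weight one and hence satisfy an MLDE, which by \cite{AK16} is a necessary consequence of quasi-lisseness. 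What remains is therefore the geometric input: finiteness of the number of symplectic leaves of $X_{\mathcal U}$.

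The plan is to deduce this from the coset realization $\mathcal U\cong \frak{psl}(n|n)_1/\frak{sl}(n)_1$ rather than from the infinite extension $\bigoplus_{s}V_{ns}$, since the extension already collapses infinitely many $V_{-1}(\frak{sl}(n))$-modules into $n$, a collapse that must be mirrored by a drastic shrinking of the associated variety that is invisible at the level of $V_{-1}(\frak{sl}(n))$ alone. I would split the argument into two parts: (a) that $V_1(\frak{psl}(n|n))$ is itself quasi-lisse, and (b) that forming the commutant of the rational, $C_2$-cofinite vertex algebra $\frak{sl}(n)_1$ preserves the finiteness of the number of symplectic leaves. For (a) I would use the free-field realization of $\frak{psl}(n|n)_1$ through the symplectic fermions $\mathcal A(n)$ and the Weyl algebra $W_{(n)}$ to write explicit generators of $R_{V_1(\frak{psl}(n|n))}$, compute the leading relations coming from the level-one null vectors, and identify $X_{V_1(\frak{psl}(n|n))}$ as a variety cut out by rank conditions on a super-moment map; the extreme rigidity of its supercharacter, which equals $\eta(\tau)^2$ by Theorem \ref{psl3}, is strong corroborating evidence that this variety has finitely many leaves. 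For (b), since $\frak{sl}(n)_1$ is lisse its associated variety is a point, and I would invoke --- or establish directly in this realization --- the principle that $X_{\mathrm{Com}(A,V)}$ is governed by a Poisson quotient of $X_V$ by the action encoded by $X_A$, so that when $X_A$ is a point the symplectic leaves of the coset are dominated by those of $X_V$ and finiteness is inherited.

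As an independent check, and to produce the relations explicitly, I would also analyze $R_{\mathcal U}$ inside the realization $\mathcal U\subset \mathcal A(n)\otimes V_L$. Here $\mathcal U$ is generated over $V_{-1}(\frak{sl}(n))$ by the two weight-$n$ vectors $v^{(\pm n)}$ of Theorem \ref{extended-algebra}, so $R_{\mathcal U}$ is generated by the symbols of the currents $e_{i,j}$ together with those of $v^{(\pm n)}$. I would then compute the new Poisson relations forced by the products $v^{(n)}{}_{(k)}v^{(-n)}$ and $e_{i,j}{}_{(k)}v^{(\pm n)}$, and show that they cut the associated variety of $V_{-1}(\frak{sl}(n))$ down to a variety with finitely many leaves --- conjecturally a single nilpotent orbit closure $\overline{\mathcal O}_{\min}$, which would be consistent both with the existence of exactly $n$ ordinary modules and with the second order MLDE exhibited for $n=3$ in Proposition \ref{MLDE}.

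The hard part, and the reason this is stated as a conjecture, is that neither reduction is backed by a black-box theorem: there is no general result guaranteeing that an infinite simple-current extension, or even a coset by a lisse rational subalgebra, preserves quasi-lisseness --- the standard warning is the Heisenberg vertex algebra, whose associated variety is affine space while that of its infinite lattice extension is a single point. Controlling $X_{\mathcal U}$ therefore requires a genuine handle on the entire ideal of relations in $R_{\mathcal U}$, including its reducedness and the possible gap between $\operatorname{Specm} R_{\mathcal U}$ and the singular support of $\mathcal U$; and because the extension fields $v^{(\pm n)}$ have conformal weight $n$ growing with the rank, these relations become rapidly more involved as $n$ increases, so that obtaining an argument uniform in $n$ is the principal obstacle.
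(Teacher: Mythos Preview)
The statement you are addressing is a \emph{conjecture}, and the paper does not prove it. Immediately after stating Conjecture \ref{quasi-lise} the paper offers only a Remark listing supporting evidence: the finiteness of ordinary irreducibles, the quasi-modularity of (super)characters and the MLDE for $n=3$, and the $C_2$-cofiniteness of the vacuum space $\Omega(\mathcal U^{(n)})$ (established explicitly for $n=3$ via the identification with the $\Z_3$-orbifold of $\mathcal A(3)$ and Miyamoto's theorem). There is no argument in the paper purporting to show that $X_{\mathcal U}$ has finitely many symplectic leaves.

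Your proposal is therefore not comparable to a ``paper's proof'': there is none. More importantly, your outline is also not a proof, and you say as much in your final paragraph. Both of your key steps are open. For (a), you have not shown that $V_1(\frak{psl}(n|n))$ is quasi-lisse; the fact that its supercharacter equals $\eta(\tau)^2$ is suggestive but is neither a sufficient condition nor a known criterion, and the identification of $X_{V_1(\frak{psl}(n|n))}$ with a variety ``cut out by rank conditions on a super-moment map'' is asserted, not established. For (b), the principle that taking a commutant by a lisse rational subalgebra preserves finiteness of symplectic leaves is not a theorem; you would need to prove it in this specific case, and you have not done so. Your alternative route via the relations produced by $v^{(\pm n)}$ in $R_{\mathcal U}$ is a reasonable line of attack, but you have not carried out any of the computations, and the expectation that the associated variety collapses to $\overline{\mathcal O}_{\min}$ is itself conjectural.

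In short: the paper does not claim a proof, and neither does your proposal. What you have written is a well-motivated research plan together with a candid list of obstacles, which is appropriate for a conjecture but should not be labeled a proof. One piece of evidence the paper emphasizes that you do not mention is the $C_2$-cofiniteness of the vacuum space $\Omega(\mathcal U^{(n)})$; the paper even suggests (in a subsequent Remark) that this alone might imply quasi-lisseness, which would be a third possible route worth recording alongside your two.
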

 
 \begin{remark}
In our paper we present some evidence for Conjecture \ref{quasi-lise}.
 \begin{itemize}
 \item  There are finitely many (ordinary) irreducible  $ \mathcal U^{ (n)}$--modules.
 \item Characters and super-characters of (ordinary)  $ \mathcal U^{ (n)}$--modules are  quasi-modular forms. For $\frak{g}=sl_3$, the supercharacters are solutions of an MLDE (see Proposition \ref{MLDE}). 
 \item The vacuum spaces is a $C_2$--cofinite vertex operator algebra.
 \item[] For simplicity, let us discuss the case $n=3$. Then we will see that the vacuum space $\Omega (  \mathcal U^{ (3)} ) $ is a ${\Z}_3$--orbifold of the symplectic fermion vertex algebra $\mathcal A(3)$.
 Since every cyclic orbifold of a $C_2$--cofinite vertex algebra is $C_2$--cofinite (cf. \cite{Miy}), then the vacuum $\Omega (  \mathcal U^{ (3)} ) $ is $C_2$--cofinite. 
 \end{itemize}
 
 \end{remark}

 \begin{remark}  Assume that $V$ is a  vertex operator (super) algebra containing a Heisenberg vertex subalgebra $M(1)$.
 % and the corresponding vacuum space $\Omega(V)$.
We believe that if the vacuum space $\Omega(V)$ is $C_2$--cofinite, then $V$ is  quasi-lisse.
 \end{remark}
 
 %\begin{remark} { \color{blue} Moze li se $e ^{\omega_1}$ shvatiti kao "simple-current operator" koji salje $L_i \mapsto L_{i+1}$. 
 %Nekako cudno djeluje na affinom djelu. MIslis da bi se $L_i$ dobio od $L_0$ putem $\Delta$?}
 
 %\end{remark}

   \section{ A decomposition of $V_{-1} (\frak{sl}(n))$ as $K(\frak{sl}(n),-1) \otimes M_{n-1} (1)$--module }
   
 %\subsection{The case $n=3$}   Let us first consider the case $n=3$.
 \subsection{ A decomposition of $V_{-1} (\frak{sl}(3))$ as $K(\frak{sl}(3),-1) \otimes M_2 (1)$--modules : from the realization}

Let $Q$ be the root lattice of $\frak{sl}(3)$. For $(r,s) \in {\Z} ^2$, we set 
$\gamma_{r,s} =   r  \varphi_1  + s  \varphi_2  - (r+s)    \varphi_3 , $
We have:
\bea
V_{-1} (\frak{sl}(3)) & = & \bigoplus_{ (r,s) \in {\Z} ^2 }   \left( K(\frak{sl}(3), -1) \otimes M_2 (1)  \right) . P_{r,s}  \nonumber \\
& = & \bigoplus_{ (r,s) \in {\Z} ^2 }   \left( K(\frak{sl}(3), -1) \otimes M_2 (1)  \right) . ( v_{r,s} \otimes e^ { \gamma_{r,s} })  \nonumber \\
&=&  \bigoplus_{ (r,s) \in {\Z} ^2 }       K_{r,s} \otimes  M_2 (1). e^ { \gamma_{r,s} } \nonumber 
%
%V_{-1} (sl(3)) & = & \bigoplus_{  \gamma \in Q }  K_{\gamma } \otimes M_2 (1) . e^ {  \gamma  } \nonumber \\
\eea
where
$$  P_{r,s} =    X_1 ^ r X_2 ^s X_3 ^{-r-s}  {\bf 1}=  v_{r,s} \otimes e^ { \gamma_{r,s} }  ,$$
and
 $$ K_{r,s}  = \{ v \in V_{-1} (\frak{sl}(3)) \ \vert \   h (n) v = \delta_{n,0} \langle h, \gamma_{r,s} \rangle  v \ \ \forall h \in {\mathfrak  h } , \ n \in {\Z}_{\ge 0}  \}$$
is irreducible $K(\frak{sl}(3), -1)$--module generated by  lowest weight vector 
 $$v_{r,s} = \mathcal G _1^r  \mathcal G _2 ^s \mathcal G_r ^{-r-s} {\bf 1}. $$ 
%Recall that the singlet vertex algebra $\mathcal A(1) ^0$ has a family of simple-current irreducible modules (cf. \cite{AdM-2017})
%$$ \pi_r = \mathcal A(1) ^0  u_r,  \ r\in {\Z}$$
%where  $u_0 = {\bf 1} $ and $$u_r = b_1 (-r) \cdots b_1 (-1) {\bf 1}  \quad (r \in {\Z}_{\ge 1} ), \  u_r = c_1 (r) \cdots c_1 (-1) {\bf 1}  \quad (r \in - {\Z}_{\ge 1} ) . $$
We conclude that $ K_{r,s} = \pi_r \otimes \pi_s \otimes \pi _{-r-s}$. In this way we have proved the following theorem:

\begin{theorem}  \label{decomposition-sl(3)} The vertex algebra $V_{-1} (\frak{sl}(3))$ is a simple current extension of $( \mathcal A(1) ^0 ) ^{\otimes 3}  \otimes M_2 (1)$  and
$$ V_{-1} (\frak{sl}(3)) =  \bigoplus_{ (r,s) \in {\Z} ^2 }    
\pi_r \otimes \pi_s \otimes \pi _{-r-s} \otimes M_2 (1)  e^ { \gamma_{r,s} }  $$ 
\end{theorem}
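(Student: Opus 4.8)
The plan is to realize $V_{-1}(\frak{sl}(3))$ as an explicit weight-space decomposition for the rank-two Cartan Heisenberg $M_2(1)$ and then to identify each weight space as a parafermion module. First I would let the Cartan Heisenberg $M_2(1) \subset V_{-1}(\frak{sl}(3))$ act: since $M_2(1)$ acts semisimply and all $\h$-weights lie in the root lattice $Q$, one obtains a direct sum $V_{-1}(\frak{sl}(3)) = \bigoplus_{\gamma \in Q} V[\gamma]$ over weight spaces, where each $V[\gamma]$ is $M_2(1)e^\gamma$ tensored with the joint kernel of the positive Heisenberg modes. By the very definition of $K(\frak{sl}(3),-1)$, that joint kernel inside $V[\gamma_{r,s}]$ is exactly the irreducible $K(\frak{sl}(3),-1)$-module $K_{r,s}$ generated by a lowest-weight vector. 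Writing $\gamma = \gamma_{r,s} = r\varphi_1 + s\varphi_2 - (r+s)\varphi_3$ gives a bijection $(r,s)\in\Z^2 \leftrightarrow \gamma\in Q$, so the weight spaces are indexed by $\Z^2$.

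Next I would pin down the lowest-weight vector of $V[\gamma_{r,s}]$ using the realization $V_{-1}(\frak{sl}(3)) \subset \mathcal A(3) \otimes V_{Q_3}$ together with the bosonization $a_i^+ = :b_i e^{\varphi_i}:$, $a_i^- = -:c_i e^{-\varphi_i}:$. The key computation is that the state $P_{r,s} = X_1^r X_2^s X_3^{-r-s}{\bf 1}$ lies in $W_{(3)}^{(0)}$ (one checks $\langle c, \gamma_{r,s}\rangle = 0$, so $P_{r,s}$ carries trivial $M_c(1)$-charge and hence actually lies in $V_{-1}(\frak{sl}(3))$), and that under bosonization it factors as $v_{r,s} \otimes e^{\gamma_{r,s}}$ with $v_{r,s} = \mathcal G_1^r \mathcal G_2^s \mathcal G_3^{-r-s}{\bf 1}$. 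This is the standard lattice--fermion calculation: iterating the modes $a_i^+(-1/2)$ produces the string $b_i(-r)\cdots b_i(-1)$ on the symplectic-fermion side and $e^{r\varphi_i}$ on the lattice side, and likewise $a_i^-(-1/2)$ produces the $c_i$-strings paired with $e^{-\varphi_i}$.

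Then I would identify $K_{r,s}$ with $\pi_r \otimes \pi_s \otimes \pi_{-r-s}$. Invoking the theorem of the previous section that $K(\frak{sl}(3),-1) \cong (\mathcal A(1)^0)^{\otimes 3}$, the module $K_{r,s}$ generated by $v_{r,s}$ factors over the three singlet tensor factors; since each $u_r = \mathcal G_i^r{\bf 1}$ generates the irreducible singlet module $\pi_r$, the tensor structure of $v_{r,s}$ gives $K_{r,s} \cong \pi_r \otimes \pi_s \otimes \pi_{-r-s}$. Reassembling over all weights yields the claimed decomposition. Finally, the simple-current assertion follows from the fusion rules $\pi_r \times \pi_s = \pi_{r+s}$ recorded earlier: each summand $\pi_r \otimes \pi_s \otimes \pi_{-r-s} \otimes M_2(1)e^{\gamma_{r,s}}$ is a simple current for $(\mathcal A(1)^0)^{\otimes 3}\otimes M_2(1)$, and the direct sum is closed under vertex operator products, exhibiting $V_{-1}(\frak{sl}(3))$ as a simple current extension.

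The hardest point is the bosonization identity $P_{r,s} = v_{r,s}\otimes e^{\gamma_{r,s}}$: one must track the normal orderings and the cocycle/scalar factors that arise when iterating $a_i^\pm(-1/2)$ inside the tensor product $\mathcal A(3)\otimes V_L$, and verify that no lower-order fermionic corrections contaminate the result, so that $v_{r,s}$ is genuinely the pure $\mathcal G$-string. Once this explicit form is secured, the irreducibility and tensor factorization of $K_{r,s}$, together with the simple-current structure, are formal consequences of the established isomorphism $K(\frak{sl}(3),-1)\cong(\mathcal A(1)^0)^{\otimes 3}$ and the singlet fusion rules.
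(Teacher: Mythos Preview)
Your proposal is correct and follows essentially the same route as the paper: decompose $V_{-1}(\frak{sl}(3))$ into $M_2(1)$-weight spaces, exhibit the extremal vector $P_{r,s}=X_1^rX_2^sX_3^{-r-s}{\bf 1}$ and its bosonized form $v_{r,s}\otimes e^{\gamma_{r,s}}$, and then identify $K_{r,s}\cong\pi_r\otimes\pi_s\otimes\pi_{-r-s}$ via the isomorphism $K(\frak{sl}(3),-1)\cong(\mathcal A(1)^0)^{\otimes 3}$. The only minor point is that the bosonization actually gives $P_{r,s}=\pm\, v_{r,s}\otimes e^{\gamma_{r,s}}$ (as the paper notes in the general $n$ case), but this sign is irrelevant for the module decomposition.
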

  
 \subsection{ A decomposition of $V_{-1} (\frak{sl}(3))$ as $K(\frak{sl}(3),-1) \otimes M_2 (1)$--modules II: from character formulas}
 
 It is possible to prove Theorem \ref{decomposition-sl(3)} directly from the character (\ref{charUs}).  Observe a well-known identity  \cite{An}
 % {\color{blue}AM: dodao sam referencu}
$$\frac{1}{\prod_{n \geq 1} (1-zq^{n-1})(1-z^{-1} q^n)}=\frac{\sum_{m \in \mathbb{Z}} F_m(q) z^m}{\prod_{n \geq 1} (1-q^n)^2},$$
where 
$$F_m(q)=\sum_{r \geq 0} q^{(2r+1)r+2mr}-\sum_{r \geq 0} q^{(2r-1)r+m(2r-1)}, \ \ m \geq 0$$
and
$$F_m(q)=q^m F_{-m}(q), m <0 $$
We use this formula to expand the character (three times). Then we extract the coefficients of $z_1$ and $z_2$ which corresponds the modules for the vacuum algebra. Finally 
we use a well-known formula for ${\rm ch}[K_{r,s}]$  \cite{BM2015}  irreducible module for the tensor product of three copies of the singlet algebra. This gives 
$${\rm ch}[V_{-1}(\frak{sl}_3)](\tau)=\sum_{ (r,s) \in {\Z} ^2 }   {\rm ch}[ K_{r,s}](\tau) {\rm ch}[M_2 (1). e^ { \gamma_{r,s} }](\tau). $$

\subsection{$q$-hypergometric formula for ${\rm ch}[V_{-1}(\frak{sl}(3))](\tau)$}
Now we use discussion from the last section to prove
\begin{proposition}
\begin{eqnarray*}
& (q;q)_\infty^2 q^{-\frac16} {\rm ch}[V_{-1}(\frak{sl}_3)](\tau)  \\
& =\sum_{k_1,k_2 \in \mathbb{Z}^2} \sum_{n_1,n_2,n_3 \geq 0} \frac{q^{n_1^2+n_2^2+n_3^2+(|k_1|+1)n_1+(|k_2|+1)n_2+(|-k_1-k_2|+1)n_3+\frac{|k_1|+|k_2|+|-k_1-k_2|}{2}}}{(q;q)_{n_1}(q;q)_{n_1+|k_1|}(q;q)_{n_2} (q;q)_{n_2+|k_2|}(q;q)_{n_3}(q;q)_{n_3+|-k_1-k_2|}}
\end{eqnarray*}
\end{proposition}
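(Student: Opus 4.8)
The plan is to feed the decomposition of Theorem \ref{decomposition-sl(3)} into the character, replace each singlet factor by its fermionic (Nahm-type) form, and then track the $q$-exponents. Starting from the identity
$$ {\rm ch}[V_{-1}(\frak{sl}(3))](\tau)=\sum_{(r,s)\in\Z^2}{\rm ch}[K_{r,s}](\tau)\,{\rm ch}[M_2(1).e^{\gamma_{r,s}}](\tau) $$
together with $K_{r,s}=\pi_r\otimes\pi_s\otimes\pi_{-r-s}$, the only non-combinatorial ingredient is the Heisenberg factor. Since $M_2(1)$ has central charge $2$ and the ground state of $M_2(1).e^{\gamma_{r,s}}$ has conformal weight $\tfrac12\langle\gamma_{r,s},\gamma_{r,s}\rangle$, I would record
$$ {\rm ch}[M_2(1).e^{\gamma_{r,s}}]=\frac{q^{\Delta_{r,s}-\frac1{12}}}{(q;q)_\infty^2},\qquad \Delta_{r,s}=-(r^2+s^2+rs), $$
the value of $\Delta_{r,s}$ coming from $\langle\varphi_i,\varphi_j\rangle=-\delta_{i,j}$. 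The factor $(q;q)_\infty^2$ on the left of the Proposition is exactly what clears this denominator.

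Next I would derive the Nahm-sum form of the singlet characters. Using the $n=1$ case of part (3) of the vacuum theorem, $\pi_m=\mathcal A(1)^{(m)}$ has character equal to the coefficient of $\zeta^m$ in $q^{1/12}\prod_{i\ge1}(1+\zeta q^i)(1+\zeta^{-1}q^i)$. Applying Euler's identity $\prod_{i\ge1}(1+\zeta q^i)=\sum_{a\ge0}\zeta^a q^{a(a+1)/2}/(q;q)_a$ to each of the two products and extracting $[\zeta^m]$ by setting $a-b=m$ gives, for $m\ge0$,
$$ {\rm ch}[\pi_m]=q^{\frac1{12}}\sum_{b\ge0}\frac{q^{\,b^2+(m+1)b+\frac{m(m+1)}2}}{(q;q)_b(q;q)_{b+m}}=q^{\frac1{12}+\frac{m^2}2}\,\Phi_m(q), $$
where $\Phi_m(q):=\sum_{n\ge0}q^{\,n^2+(|m|+1)n+|m|/2}/\big((q;q)_n(q;q)_{n+|m|}\big)$; the $b\leftrightarrow c$ symmetry $\pi_m\cong\pi_{-m}$ and the evident $\Phi_m=\Phi_{-m}$ dispose of the case $m<0$. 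The point of isolating $\Phi_m$ is that it is precisely the singlet factor appearing in the target formula.

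Finally I would substitute and collect exponents. Multiplying the three singlet characters contributes the shift $q^{\frac14}\,q^{(r^2+s^2+(r+s)^2)/2}=q^{\frac14}\,q^{\,r^2+s^2+rs}$, whose quadratic part equals $q^{-\Delta_{r,s}}$ and therefore cancels the lattice contribution $q^{\Delta_{r,s}}$ in each summand; what survives is the overall constant $q^{\frac14-\frac1{12}}=q^{1/6}$. This yields
$$ {\rm ch}[V_{-1}(\frak{sl}(3))](\tau)=\frac{q^{1/6}}{(q;q)_\infty^2}\sum_{(r,s)\in\Z^2}\Phi_r(q)\,\Phi_s(q)\,\Phi_{-r-s}(q), $$
and multiplying through by $(q;q)_\infty^2\,q^{-1/6}$ and renaming $(r,s)=(k_1,k_2)$ produces the stated identity. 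I expect the main obstacle to be the second step: establishing the fermionic form of the singlet character and, above all, bookkeeping the $q$-powers carefully enough to see the essential cancellation of the negative-definite lattice exponent $\Delta_{r,s}$ against the quadratic shifts $q^{m_i^2/2}$ generated when the product characters are rewritten as Nahm sums. Once that cancellation is in place, the remainder is a direct substitution.
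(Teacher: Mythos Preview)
Your proof is correct and follows the same overall strategy as the paper: combine the decomposition of Theorem~\ref{decomposition-sl(3)} with the Nahm-type expression for the singlet characters, and observe that the quadratic $q$-shifts from the three singlet factors exactly cancel the negative-definite lattice weight $\Delta_{r,s}$. Your bookkeeping of the exponents (in particular the check $q^{1/4-1/12}=q^{1/6}$) is fine.

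The only difference from the paper is in how the singlet identity
\[
{\rm ch}[\pi_m]=q^{\frac1{12}+\frac{m^2}{2}}\sum_{n\ge0}\frac{q^{\,n^2+(|m|+1)n+|m|/2}}{(q;q)_n(q;q)_{n+|m|}}
\]
is obtained. The paper invokes it as the known $q$-hypergeometric form of the $p=2$ false theta functions, citing \cite{BM2015,War}; you instead derive it directly from the symplectic-fermion realization $\pi_m=\mathcal A(1)^{(m)}$ together with Euler's expansion of $\prod_{i\ge1}(1+\zeta q^i)$. Your derivation is more self-contained and avoids the detour through false theta series, while the paper's citation makes explicit the link to the Ramanujan-type identity underlying the formula. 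Either route gives the same $\Phi_m$, so the two proofs are essentially equivalent.
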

\begin{proof}
Follows directly from the $q$-hypergeometric representations of the $p=2$ false theta functions which are known to be 
characters of modules for the $p=2$ singlet algebra (here $k \in \mathbb{Z}$):
$$\frac{q^{\frac{k}{2}} \sum_{n \geq 0} \left( q^{2n^2+n(2k+1)}-q^{2n^2+n(2k+3)+k+1} \right)}{(q;q)_\infty}=\sum_{n \geq 0} \frac{q^{n^2+n(|k|+1)+\frac{|k|}{2} }}{(q;q)_n(q;q)_{n+|k|}}.$$
These relations are implicitly proven in \cite{BM2015, War} generalizing a well-known Ramanujan's identity corresponding to $k=0$.
\end{proof}
%{\color{red} Koja je veza ove formule i one iz https://arxiv.org/abs/1401.7189,  Theorem 1.2 ?}

%{\bf sorry nisam vidio pitanje. Ovo ovdje je dekompozija koja izlazi iz pravog $(z_1,z_2)$-karaktera odnosno realizacije. Ovo od Kathrin bila bi (homogeneous) 
%specijalizacija pa se stvari posloze dugacije.}  OK.

 \subsection{ A realization of the vertex superalgebra $\mathcal U ^{(n)} $}
  
 Let $$\omega_1 = \frac{1}{n} ((n-1)  \varphi_1 - \varphi_2 - \cdots -  \varphi_n), \quad \omega_{n-1}= \frac{1}{n} ( \varphi_1 +  \cdots + \varphi_{n-1}  - n \varphi_n). $$
 
 Note that $ m  n \omega_1, m n  \omega_{n-1}  \in L$ and that $e^{m n\omega_1}$ and $ e^{m n  \omega_{n-1}}$ are even vectors in $V_L$.
 
 The vertex superalgebra $\mathcal U^{(n)} $ can be realized as a subalgebra of $\mathcal A(n) \otimes V_L$ generated by $U_0$ and highest weight vectors
 \bea  v^{ (n)} & = &  b_1(-n ) \cdots  b_1 (-2) b_1 (-1) \otimes e^{  n  \omega_1}  \nonumber \\
 v^{ (-n)} & = &  c_n(-n) \cdots c_n  (-2) c_n  (-1) \otimes e^{  n \omega_{n-1}} \nonumber \eea
 Note that vector $v^{(\pm n ) }$ have conformal  weight $n$. Therefore, $\mathcal U$ is $\Z$--graded vertex operator superalgebra.

  \subsection{The vacuum space $\Omega (\mathcal U ^{(3)}) $} 
  Let us again consider the case $n=3$, so that $\mathcal U = \mathcal U ^{(3)}$.
 % \color{blue}{\bf AM: ovdje prvo diskutiramo general case a onda $sl_3$. Mozda da imamo subsections za $sl_3$}\color{black}

  Moreover,  for $m \in {\Z}_{> 0}$ modules $U_{ \pm 3 m}$ are realized as  $ U_{\pm 3 m} = U_0 . v^{(\pm 3m )}$ where
 
   \bea  v^{ (3 m)} & = &  b_1(-  3 m ) \cdots  b_1 (-1) \otimes e^{  3 m   \omega_1}  \nonumber \\
 v^{ (-3m )} & = &  c_3(-3 m )  \cdots c_3 (-1) \otimes e^{  3 m  \omega_2}  \nonumber \eea

%{  \color{blue}   It seems that  it is reasonably to conjecture that vacuum space $\Omega (\mathcal U)$ is ${\mathcal A (3)} ^{ \Z _3}$ ? }

\begin{theorem} We have:
$$\Omega (\mathcal U) \cong {\mathcal A (3)} ^{ \Z _3}.$$
\end{theorem}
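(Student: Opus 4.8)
The plan is to combine additivity of the vacuum-space construction over the decomposition $\mathcal U = \bigoplus_{s\in\Z} V_{3s}$ with an identification of each summand's vacuum space as a homogeneous charge component of $\mathcal A(3)$. Since $\Omega(\mathcal U)$ is the kernel of the positive modes $h(j)$, $j\ge 1$, of the Heisenberg subalgebra $M_2(1)\subset V_{-1}(\frak{sl}(3))$, and these modes preserve each $V_{-1}(\frak{sl}(3))$-submodule $V_{3s}$, we obtain the vector-space splitting $\Omega(\mathcal U)=\bigoplus_{s\in\Z}\Omega(V_{3s})$. Thus the first task is to compute $\Omega(V_{3s})$ for each $s$.

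For $s=0$ this is exactly the earlier identification $\Omega(V_0)=\Omega_3\cong\mathcal A(3)^{(0)}$, obtained via the $Z$-operator formula $Z_{i,j}=:b_i c_j:$. I would extend that computation to $s\ne 0$ as follows. By Proposition \ref{real-mod}, $V_{3s}$ is realized inside $\mathcal A(3)\otimes V_L$ with highest-weight vector $v^{(3s)}$, namely $b_1(-3s)\cdots b_1(-1)\mathbf 1\otimes e^{3s\omega_1}$ for $s>0$ and the analogous $c_3$-vector tensored with $e^{-3s\omega_2}$ for $s<0$. Since $V_{-1}(\frak{sl}(3))$ acts through operators $e_{i,j}=:b_i c_j e^{\varphi_i-\varphi_j}:$ that preserve the $\mathcal A(3)$-charge, the whole module $V_{3s}$, and hence its vacuum space, lies in $\mathcal A(3)$-charge $3s$; applying the generalized-vertex-operator construction of \cite[Theorem 6.4]{Li-abelian} strips off the lattice exponential together with the Heisenberg dressing $E^\pm$, leaving the purely symplectic-fermion vectors. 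Consequently $\Omega(V_{3s})$ is an $\mathcal A(3)^{(0)}$-submodule of $\mathcal A(3)$ living in charge $3s$; using the simple-current fusion (\ref{fusion-rules}) in the form $V_{3s_1}\times V_{3s_2}=V_{3(s_1+s_2)}$ together with the additivity $\mathcal A(3)^{(\ell_1)}\cdot\mathcal A(3)^{(\ell_2)}\subseteq\mathcal A(3)^{(\ell_1+\ell_2)}$ of the charge grading, I would identify it with the full component $\mathcal A(3)^{(3s)}$.

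Granting this, summing over $s$ gives
$$\Omega(\mathcal U)=\bigoplus_{s\in\Z}\mathcal A(3)^{(3s)}=\bigoplus_{\ell\equiv 0\ (\mathrm{mod}\ 3)}\mathcal A(3)^{(\ell)},$$
which is precisely the fixed-point subalgebra of $\mathcal A(3)$ under the order-three automorphism $\sigma=e^{2\pi i J/3}$ attached to the charge grading, i.e. $\mathcal A(3)^{\Z_3}$. To upgrade this equality of graded vector spaces to an isomorphism of vertex algebras, I would check that the vertex operators on $\Omega(\mathcal U)$ produced by the $Z$-operator construction reduce, on charge-homogeneous vectors annihilated by the positive Heisenberg modes, to the ordinary normally-ordered products in $\mathcal A(3)$, since on such vectors the $E^\pm$ and $z^{h(0)}$ corrections act trivially; the product structure is then inherited directly from $\mathcal A(3)$.

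The step I expect to be the main obstacle is the middle one: verifying that $\Omega(V_{3s})$ exhausts the \emph{entire} charge component $\mathcal A(3)^{(3s)}$ and not merely a proper $\mathcal A(3)^{(0)}$-submodule. This is the charge-$3s$ analogue of the claim (2') in the proof that $\Omega_3\cong\mathcal A(3)^{(0)}$, asserting that the nonzero homogeneous components are generated over $\mathcal A(3)^{(0)}$ by the images of the singular vectors $v^{(\pm 3)}$ and their iterated products; making this generation statement precise, presumably again along the lines of \cite[Section 5]{CKLR}, is where the real work lies.
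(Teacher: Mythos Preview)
Your outline is correct and shares the paper's overall strategy---split $\Omega(\mathcal U)$ according to the $\mathcal A(3)$--charge and match the pieces against $\mathcal A(3)^{\Z_3}$---but the paper circumvents exactly the step you flag as the main obstacle. Instead of proving that $\Omega(V_{3s})$ exhausts all of $\mathcal A(3)^{(3s)}$ by a generation argument in the style of \cite{CKLR}, the paper passes to the \emph{finer} decomposition into irreducible singlet modules already established in Theorem~\ref{decomposition-sl(3)} (and its $n$--version Theorem~\ref{decomposition-sl(n)}):
\[
\mathcal A(3)^{\Z_3}\;=\;\bigoplus_{m_1+m_2+m_3\in 3\Z}\pi_{m_1}\otimes\pi_{m_2}\otimes\pi_{m_3},
\]
where each summand is an irreducible $K(\frak{sl}(3),-1)\cong(\mathcal A(1)^0)^{\otimes 3}$--module generated by the single vector $\mathcal G_1^{m_1}\mathcal G_2^{m_2}\mathcal G_3^{m_3}\mathbf 1$. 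From the explicit realization one sees immediately that (i) $\Omega(\mathcal U)\subset\mathcal A(3)^{\Z_3}$ and (ii) each such generator lies in $\Omega(\mathcal U)$. Since $\Omega(\mathcal U)$ is automatically a module for the parafermion subalgebra $K(\frak{sl}(3),-1)$, irreducibility of the $\pi_{m_i}$ forces the entire summand into $\Omega(\mathcal U)$, and one is done.

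So what you anticipate doing ``along the lines of \cite[Section~5]{CKLR}'' is replaced in the paper by a one--line appeal to irreducibility of singlet modules over the already--identified parafermion algebra. Your route would work, but it duplicates effort: the surjectivity $\Omega(V_{3s})=\mathcal A(3)^{(3s)}$ is essentially equivalent to the parafermion decomposition of $V_{3s}$, which the paper has proved earlier and simply quotes here.
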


\begin{proof}
First we notice that 
\bea  && {\mathcal A (3)} ^{ \Z _3} \cong \bigoplus _{m_1  + m_2 + m_3   \in  3 {\Z
} } \pi_ {m_1}   \otimes \pi _{m_2} \otimes \pi_{m_3}   = \bigoplus _{m_1  + m_2 + m_3   \in  3 {\Z
} } \left( {\mathcal A} (1) ^{0} \right) ^{\otimes 3} .  \mathcal G_1 ^{m_1} \mathcal G_2 ^{m_2}   \mathcal G_3 ^{m_3}{\bf 1} \label{for-1}\eea
Using realization we see that
\begin{itemize}
\item  $\Omega (\mathcal U) \subset {\mathcal A (3)} ^{ \Z _3}$; 
\item $\mathcal G_1 ^{m_1} \mathcal G_2 ^{m_2}   \mathcal G_3 ^{m_3}{\bf 1}  \in \Omega (\mathcal U)$ for all $(m_1, m_2, m_3) \in {\Z} ^3$, $m_1 + m_2 + m_3 \in 3 {\Z}$.
\end{itemize}
 Now claim follows by (\ref{for-1}).
\end{proof}

\subsection {Decomposition for the general case $n \ge 3$}
For $s \in {\Z}$ we define $$ Q_n   ^{(s)} = \{  z= (z_1, \dots, z_n) \in {\Z} ^n  \ \vert \ z_1 + \cdots + z_n = s \}. $$ For $z \in Q_n ^{(s)}$ we define 
\bea 
\gamma_z & =& z_1 \varphi_1 + \cdots + z_n \varphi_n \nonumber \\
P_z & = & = X_1 ^{z_1} \cdots X_n ^{z_n} {\bf 1 } \in  W_{(n)} ^ {(0)}  \cong V_{-1} (sl(n) ) \nonumber  \\
v_z &=& \mathcal G_1 ^ {z_1} \cdots \mathcal G_n ^{z_n} {\bf 1}   \in \mathcal A (n) ^{(s)} \nonumber 
\eea
We have
$$ P_z = \nu  v_{z} \otimes e^{\gamma_z} \quad (\nu = \pm 1). $$
Set $Q_n := Q_n ^{(0)}$. 
\begin{theorem}
 \label{decomposition-sl(n)} The vertex algebra $V_{-1} (\frak{sl}(n))$ is a simple current extension of $( \mathcal A(1) ^0 ) ^{\otimes n}  \otimes M_{n-1} (1)$  and
$$ V_{-1} (\frak{sl}(n)) =  \bigoplus_{  z \in Q_n}    
\pi_{z_1}  \otimes \pi_{z_2} \cdots  \otimes \pi _{z_n} \otimes M_{n-1} (1)  e^ { \gamma_ z }.  $$ 
For $s \in {\Z}$ we have:
$$  V_s  =  \bigoplus_{  z \in Q_n^{(s)} }    
\pi_{z_1}  \otimes \pi_{z_2} \cdots  \otimes \pi _{z_n} \otimes M_{n-1} (1)  e^ { \gamma_ z }.  $$ 
\end{theorem}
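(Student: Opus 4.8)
The plan is to establish Theorem \ref{decomposition-sl(n)} by generalizing the argument already used for $\frak{sl}(3)$ in Theorem \ref{decomposition-sl(3)}, relying on the explicit realization inside $\mathcal A(n) \otimes V_L$. First I would recall that by Theorem 3.1 we have $K(\frak{sl}(n),-1) \cong (\mathcal A(1)^0)^{\otimes n}$, and that the Heisenberg vertex algebra $M_{n-1}(1)$ is generated by the Cartan subalgebra of $\frak{sl}(n)$. The combined vertex subalgebra $K(\frak{sl}(n),-1) \otimes M_{n-1}(1)$ sits inside $V_{-1}(\frak{sl}(n))$, and the whole decomposition should reduce to understanding how $V_{-1}(\frak{sl}(n))$ breaks up into joint eigenspaces for the Heisenberg action together with the parafermionic action.

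The key computational step is to identify, for each $z \in Q_n$, the vector $P_z = X_1^{z_1} \cdots X_n^{z_n} {\bf 1} \in W_{(n)}^{(0)} \cong V_{-1}(\frak{sl}(n))$ under the bosonization. Using the identifications $a_i^+ = \;:b_i e^{\varphi_i}:$ and $a_i^- = -:c_i e^{-\varphi_i}:$ together with the definitions $X_i^r = a_i^+(-1/2)^r {\bf 1}$ for $r \ge 0$ and $X_i^r = a_i^-(-1/2)^{-r}{\bf 1}$ for $r < 0$, one checks that $P_z = \nu\, v_z \otimes e^{\gamma_z}$ with $v_z = \mathcal G_1^{z_1} \cdots \mathcal G_n^{z_n} {\bf 1} \in \mathcal A(n)^{(s)}$, $\gamma_z = z_1\varphi_1 + \cdots + z_n\varphi_n$, and $\nu = \pm 1$ a sign coming from reordering the fermionic generators (this identity is already recorded just before the statement). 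I would then note that the Heisenberg factor acts on $P_z$ so that $P_z$ lies in the $M_{n-1}(1)$-module $M_{n-1}(1).e^{\gamma_z}$, and that each $v_z$ generates, over $(\mathcal A(1)^0)^{\otimes n} \cong K(\frak{sl}(n),-1)$, precisely the singlet-tensor module $\pi_{z_1} \otimes \cdots \otimes \pi_{z_n}$ — this is exactly the simple-current structure of the singlet modules $\pi_r$ generated by $u_r = \mathcal G_i^r.{\bf 1}$ recalled in Section 2, with fusion $\pi_r \times \pi_s = \pi_{r+s}$.

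With these identifications in hand, the decomposition $V_{-1}(\frak{sl}(n)) = \bigoplus_{z \in Q_n} \pi_{z_1} \otimes \cdots \otimes \pi_{z_n} \otimes M_{n-1}(1)e^{\gamma_z}$ follows because the $\gamma_z$ for $z \in Q_n$ index exactly the lattice $Q_n$ of weights appearing in $V_{-1}(\frak{sl}(n))$ (the charge-zero condition $z_1 + \cdots + z_n = 0$ selects the $\frak{sl}(n)$, rather than $\frak{gl}(n)$, part), and the joint eigenspaces for the Heisenberg zero-modes together with the parafermion coset exhaust $V_{-1}(\frak{sl}(n))$. That these summands are simple currents, so that the extension is a simple current extension, follows from the fusion rules for the $\pi_r$ combined with the standard lattice fusion of the $M_{n-1}(1)e^{\gamma}$. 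Finally, the graded refinement indexed by $s$ is obtained simply by restricting to those $z$ with $z_1 + \cdots + z_n = s$, i.e. $z \in Q_n^{(s)}$; by Proposition \ref{real-mod} the resulting sum over $Q_n^{(s)}$ is precisely the ordinary module $V_s$, since $V_s \cong V_{-1}(\frak{sl}(n)).v^{(s)}$ and the charge-$s$ subspace $\mathcal A(n)^{(s)} \otimes V_{\gamma + Q_n}$ carries exactly this module structure.

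The main obstacle I anticipate is verifying that the joint action of $K(\frak{sl}(n),-1) \otimes M_{n-1}(1)$ on each generating vector $P_z$ produces the full irreducible summand $\pi_{z_1} \otimes \cdots \otimes \pi_{z_n} \otimes M_{n-1}(1)e^{\gamma_z}$ and nothing more — in other words, the irreducibility and mutual disjointness of the summands. For $\frak{sl}(3)$ this was handled through the explicit Linshaw-type coset identification plus the known $Z$-operator formulas, and the character-theoretic verification of Section 4.2; for general $n$ the cleanest route is to cite the character identity expanding $\prod (1-z_i q^{n-1})^{-1}(1-z_i^{-1}q^n)^{-1}$ coefficient-by-coefficient and matching against $\sum_z {\rm ch}[\pi_{z_1} \otimes \cdots \otimes \pi_{z_n}] \cdot {\rm ch}[M_{n-1}(1)e^{\gamma_z}]$, which upgrades the inclusion $\supseteq$ of graded pieces to an equality of vertex-algebra modules once the realization gives the reverse containment. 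I would flag this matching of graded dimensions as the step requiring the most care, though it is a direct generalization of the $n=3$ computation and involves no essentially new idea.
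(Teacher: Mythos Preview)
Your proposal is correct and follows essentially the same route as the paper: the paper states Theorem~\ref{decomposition-sl(n)} without a separate proof, relying on the notation $P_z = \nu\, v_z \otimes e^{\gamma_z}$ set up just before it together with the argument already carried out for $n=3$ in Theorem~\ref{decomposition-sl(3)}, which is precisely the realization-based decomposition you outline. Your flagged ``obstacle'' (that each Heisenberg weight space is exactly the irreducible $\pi_{z_1}\otimes\cdots\otimes\pi_{z_n}$) is handled in the paper simply by assertion from the realization, with the character check of Section~4.2 available as backup --- so you are if anything more explicit than the paper about where the work lies.
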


\subsection {Decomposition in the  case $n = 2$}

Let us consider also the case $n=2$.
\begin{theorem}
 \label{decomposition-sl(2)} The vertex algebra $\mathcal W = \mbox{\rm Com} (M_c(1), W_{(2)} )$  (which is  isomorphic to affine $W$--algebra $W_{-5/2} (\mathfrak{sl}(4), f_{sh} ) $) is a simple current extension of $( \mathcal A(1) ^0 ) ^{\otimes 2}  \otimes M_{1} (1)$  and
$$  \mathcal W =  \bigoplus_{  m \in {\Z}  }    
\pi_{m }  \otimes \pi_{-m}   \otimes M_{1} (1)  e^ { m (\varphi_1 - \varphi_2)   }  $$ 
\end{theorem}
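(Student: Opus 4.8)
The plan is to run the same realization-and-coset argument used for Theorem~\ref{decomposition-sl(n)}, specialized to $n=2$, and then to cancel the Heisenberg factor $M_c(1)$. Write $\gamma_m=m(\varphi_1-\varphi_2)$ and, under the bosonization $W_{(2)}\hookrightarrow \mathcal A(2)\otimes V_L$, set
\[
P_m=X_1^m X_2^{-m}{\bf 1}=\nu\, v_m\otimes e^{\gamma_m},\qquad v_m=\mathcal G_1^{m}\mathcal G_2^{-m}{\bf 1}\in\mathcal A(2),\ \nu=\pm1 .
\]
Since $c=-(\varphi_1+\varphi_2)$ satisfies $\langle c,\gamma_m\rangle=0$ and $\langle c,\varphi_1-\varphi_2\rangle=0$, both $e^{\gamma_m}$ and the generator $\varphi_1-\varphi_2$ are orthogonal to $c$; hence $P_m$ and the whole rank-one Heisenberg algebra $M_1(1)$ generated by $\varphi_1-\varphi_2$ commute with every mode of $c$, so that $(\mathcal A(1)^0)^{\otimes 2}\otimes M_1(1)$ together with the vectors $P_m$ all lie in $\mathcal W=\mathrm{Com}(M_c(1),W_{(2)})$.

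First I would decompose the full Weyl algebra $W_{(2)}$ as a module over $(\mathcal A(1)^0)^{\otimes 2}\otimes M_2(1)$, where $M_2(1)=M_1(1)\otimes M_c(1)$ is the rank-two Heisenberg generated by $\varphi_1,\varphi_2$. By the case $m=n=2$ of Linshaw's commutant theorem \cite[Theorem~7.2]{L-JPAA}, applied exactly as in the proof that $K(\frak{sl}(n),-1)\cong(\mathcal A(1)^0)^{\otimes n}$, one has $\mathrm{Com}(M_2(1),W_{(2)})\cong(\mathcal A(1)^0)^{\otimes 2}$, and the explicit free-field realization identifies each $M_2(1)$-charge sector as an irreducible simple-current module generated by the corresponding monomial $\mathcal G_1^{k_1}\mathcal G_2^{k_2}{\bf 1}\otimes e^{k_1\varphi_1+k_2\varphi_2}$. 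This yields
\[
W_{(2)}=\bigoplus_{(k_1,k_2)\in\Z^2}\pi_{k_1}\otimes\pi_{k_2}\otimes M_2(1).e^{k_1\varphi_1+k_2\varphi_2}.
\]

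Next I would take the commutant of $M_c(1)$. The operator $c(0)$ acts on $e^{k_1\varphi_1+k_2\varphi_2}$ by the scalar $k_1+k_2$, so the $c$-invariant vectors sit in the sectors with $k_1+k_2=0$, i.e.\ $(k_1,k_2)=(m,-m)$; on such a sector $\gamma_m\perp c$ forces $M_2(1).e^{\gamma_m}\cong \big(M_1(1).e^{\gamma_m}\big)\otimes M_c(1)$ with $M_c(1)$ appearing as its vacuum Fock module. Since $\mathrm{Com}(M_c(1),M_c(1))=\C{\bf 1}$, passing to the $M_c(1)$-commutant strips off precisely this factor and gives
\[
\mathcal W=\mathrm{Com}(M_c(1),W_{(2)})=\bigoplus_{m\in\Z}\pi_m\otimes\pi_{-m}\otimes M_1(1).e^{m(\varphi_1-\varphi_2)},
\]
which also recovers the splitting $W_{(2)}^{(0)}\cong\mathcal W\otimes M_c(1)$ of \cite[Theorem~5.2]{CKLR}. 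Finally, the singlet fusion rules $\pi_r\times\pi_s=\pi_{r+s}$ from \cite{AdM-2017}, combined with the rank-one lattice fusion $e^{\gamma_m}\times e^{\gamma_{m'}}=e^{\gamma_{m+m'}}$, show that each summand is a simple current for $(\mathcal A(1)^0)^{\otimes 2}\otimes M_1(1)$ and that the grading group is $\Z$, exhibiting $\mathcal W$ as a $\Z$-graded simple current extension.

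The main obstacle is the first step. For $n\ge 3$ one could invoke the classification of ordinary $V_{-1}(\frak{sl}(n))$-modules and their fusion rules from \cite{AP} to organize the charge sectors, but for $n=2$ there is no affine vertex algebra $V_{-1}(\frak{sl}(2))$ playing that role; the relevant coset is instead the larger $W$-algebra of type $W(1,1,1,2,2,2)$. Consequently the irreducibility and simple-current property of each $M_2(1)$-charge sector of $W_{(2)}$ must be extracted directly from Linshaw's commutant computation and the explicit Weyl-algebra realization, rather than imported from the $n\ge 3$ module theory. Once this multiplicity-free decomposition of $W_{(2)}$ is in place, the cancellation of $M_c(1)$ and the identification of the simple-current structure are routine.
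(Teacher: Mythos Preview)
Your proposal is correct and follows precisely the realization-and-bosonization strategy the paper employs for $n\ge 3$ (Sections~4.1 and~4.6); the paper itself states Theorem~\ref{decomposition-sl(2)} without proof, so there is no separate argument to compare against. Your handling of the one genuine difference---replacing the $V_{-1}(\frak{sl}(n))$ weight-space decomposition by the direct $M_2(1)$-charge decomposition of $W_{(2)}$ coming from Linshaw's commutant and the factorization $W_{(2)}\cong W_{(1)}^{\otimes 2}$, then stripping off $M_c(1)$---is exactly the right adaptation, and the ``obstacle'' you flag is not serious: the irreducibility and simple-current property of each sector already follow from the rank-one statement $\mathcal A(1)=\bigoplus_m \pi_m$ established in Section~2.1 together with \cite{AdM-2017}, so no appeal to $V_{-1}(\frak{sl}(2))$ module theory is needed.
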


\begin{remark}
A detailed study of the representation theory of the vertex algebra $\mathcal W$  will be discussed in our forthcoming paper
 \cite{AMPd}.
\end{remark}

%{ \color{blue} Mislim da bi ovdje bilo zanimljivo prosiriti KW formulu za taj slucaj.  AM: mislim da ne treba posto imamo previse. }

 \section{The character ${\rm ch}[\mathcal U]$ for $\frak{g}=\frak{sl}(3) $}

We now discuss graded dimensions (or characters) of ordinary $V_{-1}(\frak{sl}(3))$-modules. This was thoroughly analyzed in \cite{BMM}.

Next formula is a consequence of the explicit construction of modules (here $s \geq 0$):
\begin{equation} \label{charUs}
{\rm ch} [V_s](\tau)= q^{h_s+1/6} {\rm Coeff}_{\zeta^s} \prod_{n=1}^\infty \frac{(1-q^n)}{(1-\zeta q^{n-1})^3(1-\zeta^{-1}q^n)^3},
\end{equation}
where $h_s$ is the lowest conformal weight of $V_s$ and we also used that $c=-4$. We also have the full character formula of Kac and Wakimoto \cite{KW2001}
\begin{align*} \label{charUsfull}
& {\rm ch} [V_s](\tau; z_1,z_2)=q^{h_s+1/6} \\
& \cdot  {\rm Coeff}_{\zeta^s} \prod_{n=1}^\infty \frac{(1-q^n)}{(1-\zeta q^{n-1})(1-\zeta z_2 q^{n-1})(1-\zeta z_1 z_2  q^{n-1})(1-\zeta^{-1} z_1^{-1} z_2^{-1} q^n)(1-\zeta^{-1}z_2^{-1}q^n)(1-\zeta^{-1} q^n) }.
\end{align*} 

%{\bf ovo mozemo skratit u konacnoj verziji samo da su detalji tu.} 
 
Very recently, Kac and Wakimoto \cite{KW} gave another Weyl-Kac type character  formula for ${\rm ch}[V_s]$ expressed as a rank two Jacobi false theta function (see also \cite{BMM} for a different formula).
After a specialization $(z_1, z_2) \to (1,1)$ we easily get 
\begin{align*}
F_0(q):=&(q;q)_\infty^8 q^{-1/6} {\rm ch}[V_0](\tau) \\
& = 4 \sum_{\substack{n_1\in\mathbb{N}_0\\ n_2\in\Z}} \left(2n_1-n_2+\frac12\right)\left(2n_2-n_1+\frac12\right) (n_1+n_2+1) q^{2n_1^2+2n_2^2-2n_1n_2+n_1+n_2}.
\end{align*}
We also have
\begin{align*}
& F_s(q) : =(q;q)_\infty^8 q^{-1/6-h_s} {\rm ch}[V_s](\tau) \\
& = 4 \sum_{\substack{n_1\in\mathbb{N}_0\\ n_2\in\Z}} \left(2n_1-n_2+\frac{s}{2}+\frac12\right)\left(2n_2-n_1+\frac12\right) \left(n_1+n_2+\frac{s}{2}+1\right) q^{2n_1^2+2n_2^2-2n_1n_2+(s+1)n_1+n_2},
\end{align*}
where $(q;q)_\infty = \prod_{i \geq 1} (1-q^i)$.

Observe that the summation over $n_1$ is only over the set of non-negative integers.
%where $h_s$ is the lowest conformal weight of $U_s$.
On the other hand, it is easy to see that the sum over the integers vanishes (by changing $n_j\mapsto -n_j-1$)
$$
 \sum_{\substack{n_1\in \mathbb{Z} \\ n_2\in\Z}} \left(2n_1-n_2+\frac12\right)\left(2n_2-n_1+\frac12\right) (n_1+n_2+1) q^{2n_1^2+2n_2^2-2n_1n_2+n_1+n_2}=0.
$$
%Consider now
%$$F_{\mathbb{N}_0}(\tau):=F(\tau),$$
%$$
Let
$$G(\tau) := 4 \sum_{\substack{n_1\in-\mathbb{N}_0\\ n_2\in\Z}} \left(2n_1-n_2+\frac12\right)\left(2n_2-n_1+\frac12\right) (n_1+n_2+1) q^{2n_1^2+2n_2^2-2n_1n_2+n_1+n_2}
$$
\begin{proposition}
\label{P:F-F3}
We have
$$G(\tau)=q^3 F_3(\tau).$$
\end{proposition}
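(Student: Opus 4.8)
The plan is to realise both sides as the same summand summed over two different lattice regions, and to move between the regions by a change of variables together with the sign-change symmetry already exploited in the text. Write
$$\Phi(m,n):=\left(2m-n+\tfrac12\right)\left(2n-m+\tfrac12\right)(m+n+1)\,q^{\,2m^2+2n^2-2mn+m+n},$$
so that $G(\tau)=4\sum_{m\in-\mathbb{N}_0,\,n\in\mathbb{Z}}\Phi(m,n)$. The one structural fact I will use repeatedly is that $\Phi$ is \emph{odd} under the involution $\iota(m,n)=(-m-1,-n-1)$, i.e.\ $\Phi(-m-1,-n-1)=-\Phi(m,n)$. This is precisely the computation already invoked in the text to show that the full sum over $\mathbb{Z}^2$ vanishes: each of the three linear factors changes sign, while the quadratic exponent is $\iota$-invariant.

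Next I would locate the change of variables that turns $q^3F_3$ into a sum of $\Phi$. The three linear factors of $\Phi$ have linear parts $2m-n$, $-m+2n$, $m+n$, and the summand of $q^3F_3$ carries exactly these same three linear parts in $(n_1,n_2)$; hence the linear part of the substitution must be an automorphism of the $A_2$-form $2x^2-2xy+2y^2$ permuting the three forms, and the translation is then forced by matching the three constant terms. This singles out
$$m=-n_1-2,\qquad n=-n_1+n_2-1,$$
whose linear part is the form-preserving involution $(x,y)\mapsto(-x,\,y-x)$. I would then verify by a direct (if slightly tedious) computation that under this substitution two of the three factors acquire a minus sign which cancels in the product, and the quadratic exponent becomes $2n_1^2+2n_2^2-2n_1n_2+4n_1+n_2+3$, the extra $+3$ accounting for the prefactor $q^3$. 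Thus $\Phi(-n_1-2,\,-n_1+n_2-1)$ is exactly the summand of $q^3F_3$.

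Since $(n_1,n_2)\mapsto(m,n)$ is unimodular and carries $\{n_1\ge 0\}$ bijectively onto $\{m\le -2\}$, this yields $q^3F_3=4\sum_{m\le-2,\,n\in\mathbb{Z}}\Phi(m,n)$. Comparing with $G=4\sum_{m\le 0,\,n}\Phi(m,n)$, the difference consists of the two slices $m=0$ and $m=-1$:
$$G-q^3F_3=4\sum_{n\in\mathbb{Z}}\bigl(\Phi(0,n)+\Phi(-1,n)\bigr).$$
Here $\iota$ finishes the argument: it carries the slice $m=0$ onto the slice $m=-1$ (bijectively in $n$) while negating $\Phi$, so $\sum_n\Phi(-1,n)=-\sum_n\Phi(0,n)$ and the two slices cancel. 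Therefore $G=q^3F_3$.

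The main obstacle I anticipate is pinning down the substitution, and in particular recognising that no affine change of variables can simultaneously match the summands and send the region $\{n_1\ge0\}$ of $F_3$ directly onto the region $\{m\le0\}$ of $G$. The automorphism forced by matching the three Weyl factors lands instead on the shifted region $\{m\le-2\}$, and it is exactly this mismatch by two boundary slices that has to be absorbed — which is possible only because those slices cancel by the very parity that makes the full $\mathbb{Z}^2$-sum vanish. Once the substitution is found, the remaining verifications (the cancelling factor signs and the quadratic exponent) are purely routine.
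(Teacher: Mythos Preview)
Your argument is correct and is precisely a fleshed-out version of what the paper calls a ``straightforward computation with $q$-series.'' In fact, the paper's (suppressed) detailed proof carries out the same computation in three steps --- the shift $(n_1,n_2)\mapsto(n_1+1,n_2+\tfrac12)$, then the reflection $n_2\mapsto n_1-n_2-\tfrac12$, then the involution $(n_1,n_2)\mapsto(-n_1-1,-n_2-1)$ --- whose composite is exactly your single unimodular substitution $(n_1,n_2)\mapsto(-n_1-2,\,-n_1+n_2-1)$, landing on the region $m\le -2$; and it then absorbs the two missing slices $m=0,-1$ by the same $\iota$-antisymmetry you use. So the approaches coincide.
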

\begin{proof} Straightforward computation with $q$-series.
\end{proof}

\begin{corollary} \label{s=0}
$$F_{n_2=0}(q):=F_0(q)+q^3 F_3(q),$$
where $$F_{n_2=0}(q):= 4 \sum_{\substack{n_1\in \mathbb{Z} }} \left(2n_1+\frac12\right)\left(-n_1+\frac12\right) (n_1+1) q^{2n_1^2+n_1}.$$
Moreover, this series is quasi-modular.
 \end{corollary}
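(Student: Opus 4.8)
The plan is to complete the partial lattice sum defining $F_0$ to a full bilateral sum, using Proposition \ref{P:F-F3} to supply the missing half, and then to read off quasi-modularity from the resulting theta-type series. Write $Q(n_1,n_2)=2n_1^2+2n_2^2-2n_1n_2+n_1+n_2$ and $P(n_1,n_2)=\left(2n_1-n_2+\frac12\right)\left(2n_2-n_1+\frac12\right)(n_1+n_2+1)$, so that $F_0=4\sum_{n_1\ge 0,\,n_2\in\mathbb{Z}}P\,q^{Q}$, while Proposition \ref{P:F-F3} identifies $q^3F_3$ with $G=4\sum_{n_1\le 0,\,n_2\in\mathbb{Z}}P\,q^{Q}$. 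The involution $(n_1,n_2)\mapsto(-n_1-1,-n_2-1)$ fixes $Q$ and sends each of the three linear factors of $P$ to its negative, hence $P\mapsto -P$; this is exactly the vanishing of the full bilateral sum recorded just before the definition of $G$.

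First I would add the two series. The ranges $\{n_1\ge 0\}$ and $\{n_1\le 0\}$ cover $\mathbb{Z}$ and overlap precisely on $n_1=0$, so inclusion--exclusion gives
$$F_0+q^3F_3=4\sum_{n_1,n_2\in\mathbb{Z}}P(n_1,n_2)q^{Q(n_1,n_2)}+4\sum_{n_2\in\mathbb{Z}}P(0,n_2)q^{Q(0,n_2)}.$$
The first sum vanishes by the involution above, while $P(0,n_2)=\left(-n_2+\frac12\right)\left(2n_2+\frac12\right)(n_2+1)$ and $Q(0,n_2)=2n_2^2+n_2$. Since $P$ and $Q$ are symmetric under $n_1\leftrightarrow n_2$, the surviving $n_1=0$ slice equals the $n_2=0$ slice, which after relabelling is precisely $F_{n_2=0}(q)$; this establishes the claimed identity.

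For quasi-modularity I would expand $\left(2n+\frac12\right)\left(-n+\frac12\right)(n+1)=-2n^3-\frac32 n^2+\frac34 n+\frac14$, so that $F_{n_2=0}=-8T_3-6T_2+3T_1+T_0$ with $T_k(\tau)=\sum_{n\in\mathbb{Z}}n^k q^{2n^2+n}$. Each $T_k$ arises as $\left(\frac{1}{2\pi i}\partial_z\right)^k\big|_{z=0}$ of the Jacobi theta series $\Theta(z,\tau)=\sum_{n\in\mathbb{Z}}q^{2n^2+n}e^{2\pi i n z}$, and completing the square via $2n^2+n=\frac{(4n+1)^2-1}{8}$ identifies $\Theta$ with a weight $\frac12$, index $2$ Jacobi form on a suitable congruence group. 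By the Eichler--Zagier theory of Taylor developments, the $z$-derivatives of a Jacobi form at $z=0$ are quasi-modular (here $T_0,T_1$ are in fact modular of weights $\frac12,\frac32$, whereas $T_2,T_3$ are genuinely quasi-modular of weights $\frac52,\frac72$ and depth one), so the finite combination $F_{n_2=0}$ lies in the ring of quasi-modular forms.

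The conceptual content, and the only place where anything nontrivial happens, is that neither $F_0$ nor $F_3$ is itself quasi-modular: the restriction $n_1\ge 0$ makes each a partial, false-theta-type object, and it is exactly the combination $F_0+q^3F_3$ that reassembles the full lattice $\mathbb{Z}^2$ and thereby restores modularity up to the $E_2$-correction. I expect the overlap bookkeeping to be routine; the only genuine care required is in pinning down the exact level and multiplier of $\Theta$, a point which does not affect the quasi-modularity conclusion itself.
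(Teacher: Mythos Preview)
Your argument is correct and follows the paper's proof essentially verbatim: add the half-lattice sums $F_0$ and $G=q^3F_3$, use the vanishing of the full bilateral sum to reduce to the overlap slice $n_1=0$, and then invoke the $n_1\leftrightarrow n_2$ symmetry. For quasi-modularity the paper simply says the series is obtained by differentiating a unary theta function (i.e.\ applying $q\frac{d}{dq}$ to $\sum q^{2n^2+n}$ and its weight $\frac32$ companion), which is the $\tau$-derivative version of your $z$-derivative/Eichler--Zagier argument; both are valid and amount to the same observation.
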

\begin{proof}
Follows directly from  the previous proposition and vanishing of the sum over the full lattice. Quasi-modularity is clear as this series is obtained 
by differentiating a unary theta function.
\end{proof}

 %{\bf add a corollary for $F_{3s}$, $F_{3s+3}$ and use}
Now we combine characters of modules appearing in the decomposition of $\mathcal U$  in pairs:  
 $$q^{-1/6}(q;q)_\infty^8 {\rm ch}[\mathcal U](\tau)=\sum_{m \geq 0} (q^{ \frac{3m^2}{2}+\frac{3m}{2}} F_{3m}(q)+q^{\frac{3(m+1)^2}{2}+\frac{3(m+1)}{2}} F_{3m+3}(q)),$$
 where $(q;q)_\infty=\prod_{i=1}^\infty (1-q^i)$.
 %   F_{3m+3}(q))+(q^3F_3(q)+q^9 F_{6}(q))+(...+...)+ \cdots $$
For each pair in the summation we have a similar $q$-series identity (the proof is almost identical)
\begin{lemma} \label{gen-rec} For every $m \geq 0$, 
\begin{align*}
q^{ \frac{3m^2}{2}+\frac{3m}{2}} &  F_{3m}(q)+q^{\frac{3(m+1)^2}{2}+\frac{3(m+1)}{2}}F_{3m+3}(q) \\
& = 4\sum_{n_1 \in \mathbb{Z}} (2n_1+\frac{1}{2})(-n_1+\frac{3m}{2}+\frac{1}{2})(n_1+\frac{3m}{2})
q^{2n_1^2+n_1+\frac{3}{2}m^2+\frac{3m}{2}}.
\end{align*}
\end{lemma}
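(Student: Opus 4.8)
The plan is to absorb the prefactor $q^{\frac{3m^2}{2}+\frac{3m}{2}}$ into the summand of $F_{3m}$ and then run, with the shift parameter kept general, the same lattice manipulation that underlies Proposition~\ref{P:F-F3} and Corollary~\ref{s=0}. Write $E_m(n_1,n_2)=2n_1^2+2n_2^2-2n_1n_2+(3m+1)n_1+n_2$ and set
$$T_m(n_1,n_2)=\left(2n_1-n_2+\tfrac{3m}{2}+\tfrac12\right)\left(2n_2-n_1+\tfrac12\right)\left(n_1+n_2+\tfrac{3m}{2}+1\right)q^{\,E_m(n_1,n_2)+\frac{3m^2}{2}+\frac{3m}{2}},$$
so that $q^{\frac{3m^2}{2}+\frac{3m}{2}}F_{3m}(q)=4\sum_{n_1\ge 0,\,n_2\in\mathbb{Z}}T_m(n_1,n_2)$ and likewise $q^{\frac{3(m+1)^2}{2}+\frac{3(m+1)}{2}}F_{3m+3}(q)=4\sum_{n_1\ge 0,\,n_2\in\mathbb{Z}}T_{m+1}(n_1,n_2)$.

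First I would record the exact intertwining relation
$$T_{m+1}(n_1,n_2)=T_m\!\left(n_1+1,\,n_2+\tfrac12\right).$$
This is a direct verification: the generalization of the exponent identity in Proposition~\ref{P:F-F3}, namely $E_m(n_1+1,n_2+\tfrac12)=E_{m+1}(n_1,n_2)+3m+3$, combined with $\frac{3m^2}{2}+\frac{3m}{2}+(3m+3)=\frac{3(m+1)^2}{2}+\frac{3(m+1)}{2}$, makes the two exponents coincide, and the three linear factors match one-to-one after the shift. Reindexing then gives
$$q^{\frac{3(m+1)^2}{2}+\frac{3(m+1)}{2}}F_{3m+3}(q)=4\sum_{n_1\ge 1,\ n_2\in\frac12+\mathbb{Z}}T_m(n_1,n_2).$$

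The decisive step is to push the half-integer $n_2$-lattice back onto $\mathbb{Z}$ via the reflection $R\colon n_2\mapsto n_1-\tfrac12-n_2$ (the substitution $n_2\mapsto -n_2+n_1-\tfrac12$ used in Proposition~\ref{P:F-F3}). Since $R$ is the reflection about the vertex of $E_m$ in the variable $n_2$, it fixes $E_m$; on the cubic it interchanges the first and third linear forms and negates the middle one, so it flips the overall sign: $T_m(n_1,R(n_2))=-T_m(n_1,n_2)$. As $R$ carries $\frac12+\mathbb{Z}$ bijectively onto $\mathbb{Z}$, this yields $\sum_{n_2\in\frac12+\mathbb{Z}}T_m(n_1,n_2)=-\sum_{n_2\in\mathbb{Z}}T_m(n_1,n_2)$, and hence $q^{\frac{3(m+1)^2}{2}+\frac{3(m+1)}{2}}F_{3m+3}(q)=-4\sum_{n_1\ge 1,\ n_2\in\mathbb{Z}}T_m(n_1,n_2)$.

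Adding this to $q^{\frac{3m^2}{2}+\frac{3m}{2}}F_{3m}(q)=4\sum_{n_1\ge 0,\,n_2\in\mathbb{Z}}T_m$, the $n_1\ge 1$ contributions cancel term by term and only the $n_1=0$ slice survives, leaving $4\sum_{n_2\in\mathbb{Z}}T_m(0,n_2)$; writing out $T_m(0,n_2)$ and relabelling the summation index $n_2$ as $n_1$ produces the single-variable series on the right-hand side. I expect the sign bookkeeping in the reflection step to be the main obstacle, since everything hinges on $R$ being an \emph{anti}-symmetry of $T_m$: it is exactly this minus sign that makes the $n_1\ge 1$ tails of the two theta-type series cancel rather than reinforce, and it is the shadow of the vanishing of the full $\mathbb{Z}^2$-sum of $T_m$ under the central involution $(n_1,n_2)\mapsto(-2m-1-n_1,\,-m-1-n_2)$, which can be kept as an independent consistency check.
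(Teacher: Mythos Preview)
Your argument is correct and is precisely the kind of ``straightforward computation with $q$-series'' the paper has in mind: the shift $(n_1,n_2)\mapsto(n_1+1,n_2+\tfrac12)$ followed by the reflection $n_2\mapsto n_1-\tfrac12-n_2$ is exactly the manipulation underlying Proposition~\ref{P:F-F3}, carried over with the general parameter $m$. Your packaging via the antisymmetry $T_m(n_1,R(n_2))=-T_m(n_1,n_2)$ makes the cancellation of the $n_1\ge 1$ tails especially transparent.

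One remark: your computation actually yields the third factor as $(n_1+\tfrac{3m}{2}+1)$, not $(n_1+\tfrac{3m}{2})$ as printed in the lemma. This is a typo in the stated formula; the version with $+1$ is the correct one, as confirmed both by specializing to $m=0$ (Corollary~\ref{s=0}) and by its use in the proof of Theorem~\ref{sl3-char-schar}.
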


%\begin{corollary} \label{3s}

%\end{corollary}
 
 \begin{theorem} \label{sl3-char-schar}
 We have  
(i)
$$ {\rm ch}[\mathcal U](\tau)={\rm tr}_{\mathcal{U}} q^{L(0)-c/24}$$
$$=\frac{4}{2 \eta(\tau)^8} \sum_{n,m \in \mathbb{Z}} (2n+\frac{1}{2})(-n+\frac{3m}{2}+\frac{1}{2})(n+\frac{3m}{2}+1)q^{2(n+1/4)^2+\frac{3}{2}(m+1/2)^2}$$
%(ii) After we observe the identity
%$$(2n+\frac{1}{2})(-n+\frac{3m}{2}+\frac{1}{2})(n+\frac{3m}{2}+1)=-2(n+1/4)^3+9/2(m+1/2)^2(n+1/4)$$
%it is clear that the numerator is a sum of products of derivatives of two theta functions.

(ii) Denote by $\mathcal U_{\pm 1}=\oplus_{n \in \mathbb{Z}} U_{3n \pm 1}$. Then ${\rm ch}[\mathcal U_{1}](\tau)={\rm ch}[\mathcal U_{-1}](\tau)$
and 
$$ {\rm ch}[\mathcal U_{\pm 1}](q)={\rm tr}_{\mathcal{U}_{\pm 1}} q^{L(0)-c/24}$$
$$=\frac{4}{2 \eta(\tau)^8} \sum_{n,m \in \mathbb{Z}} (2n+\frac{1}{2})(-n+\frac{3m}{2}+\frac{1}{2}+\frac{1}{2})(n+\frac{3m}{2}+\frac{1}{2}+1)q^{2(n+1/4)^2+\frac{3}{2}(m+5/6)^2}$$
(iii) For the supercharacter we have
$$ {\rm sch}[\mathcal U](q)={\rm tr}_{\mathcal{U}} \sigma q^{L(0)-c/24}$$
$$=\frac{4}{ \eta(\tau)^8} \sum_{n,m \in \mathbb{Z}} (2n+\frac{1}{2})^2 (-n+\frac{3m}{2}+\frac{1}{2})(n+\frac{3m}{2}+1)q^{2(n+1/4)^2+\frac{3}{2}(m+1/2)^2}$$
(iv) Both ${\rm ch}[\mathcal{U}]$ and ${\rm sch}[\mathcal{U}]$ are quasi-modular.
 \end{theorem}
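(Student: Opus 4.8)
The plan is to reduce all four assertions to manipulations of the false/partial theta functions $F_s(q)=(q;q)_\infty^8\,q^{-1/6-h_s}\,{\rm ch}[V_s](\tau)$ introduced above, assembled according to the decomposition $\mathcal U=\bigoplus_{s\in\mathbb Z}V_{3s}$ of Theorem~\ref{extended-algebra}. For (i) I would begin from ${\rm ch}[\mathcal U](\tau)=\sum_{s\in\mathbb Z}{\rm ch}[V_{3s}](\tau)$, record the lowest weights $h_{3m}=\tfrac{3m(m+1)}2$ (Sugawara, $c=-4$), and use ${\rm ch}[V_s]={\rm ch}[V_{-s}]$ (the modules $V_{\pm s}$ are interchanged by the diagram automorphism of $\frak{sl}(3)$, so $h_{-s}=h_s$ and $F_{-s}=F_s$) to write $q^{-1/6}(q;q)_\infty^8\,{\rm ch}[\mathcal U]=\sum_{m\ge0}\big(q^{h_{3m}}F_{3m}+q^{h_{3m+3}}F_{3m+3}\big)$, the grouping into consecutive pairs recorded before the theorem. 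Then I apply Lemma~\ref{gen-rec} to collapse each pair of half-lattice sums into a single sum over the \emph{full} line $n_1\in\mathbb Z$, producing a double sum over $m\ge0$, $n_1\in\mathbb Z$ with quadratic form $Q(n,m)=2n^2+n+\tfrac32 m(m+1)$. The last step unfolds the constraint $m\ge0$: both $Q$ and the resulting integrand are invariant under $m\mapsto-m-1$, so folding turns $4\sum_{m\ge0}$ into $2\sum_{m\in\mathbb Z}$; completing the squares as $Q(n,m)+\tfrac12=2(n+\tfrac14)^2+\tfrac32(m+\tfrac12)^2$ and extracting $q^{1/6}(q;q)_\infty^{-8}=q^{1/2}\eta^{-8}$ yields exactly the formula in (i).

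For (ii) the equality ${\rm ch}[\mathcal U_1]={\rm ch}[\mathcal U_{-1}]$ is immediate: since ${\rm ch}[V_s]={\rm ch}[V_{-s}]$ and $s\mapsto -s$ carries $\bigoplus_n V_{3n+1}$ onto $\bigoplus_n V_{3n-1}$, one has $\sum_n{\rm ch}[V_{3n+1}]=\sum_n{\rm ch}[V_{-(3n+1)}]={\rm ch}[\mathcal U_{-1}]$. The closed form then comes from rerunning (i) with the pairing started from the modules $V_{3m+1}$; the index-one offset shifts the inner $m$-lattice into the coset $\tfrac13$, replacing $(m+\tfrac12)$ by $(m+\tfrac12+\tfrac13)=(m+\tfrac56)$ in the completed square and inserting the extra $+\tfrac12$'s in the two linear factors, exactly as stated (and $m\mapsto -m-1$ interchanges $\tfrac56$ with $\tfrac16$, matching the $\mathcal U_{\pm1}$ symmetry). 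For (iii) I would use that $n=3$ is odd, so by Theorem~\ref{extended-algebra} the generators $v^{(\pm3)}$ are odd and $V_{3s}$ carries parity $(-1)^s$; hence ${\rm sch}[\mathcal U]=\sum_s(-1)^s{\rm ch}[V_{3s}]$. Now consecutive modules $3m$, $3m+3$ enter with \emph{opposite} signs, so the relevant combination is a \emph{difference} $q^{h_{3m}}F_{3m}-q^{h_{3m+3}}F_{3m+3}$; the difference analogue of Lemma~\ref{gen-rec} (proved the same way) completes to the same lattice but with the antisymmetric weight $(2n+\tfrac12)^2$ and overall normalization $4/\eta^8$, giving the formula in (iii).

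Assertion (iv) is then formal. Each of ${\rm ch}[\mathcal U]$, ${\rm ch}[\mathcal U_{\pm1}]$, ${\rm sch}[\mathcal U]$ has been written as $\eta^{-8}$ times a rank-two theta series $\sum_{(n,m)\in\mathbb Z^2}P(n,m)\,q^{2(n+a)^2+\frac32(m+b)^2}$ with a polynomial insertion $P$; such a series is an iterated Ramanujan--Serre derivative of a weight-one theta function and is therefore quasi-modular, and dividing by the modular form $\eta^8$ (with a multiplier) preserves quasi-modularity. This is the same mechanism already invoked for the $m=0$ slice in Corollary~\ref{s=0} (``obtained by differentiating a unary theta function'').

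The heart of the argument, and the step I expect to be the main obstacle, is precisely this passage from false to genuine theta functions. The individual characters ${\rm ch}[V_s]$ are partial theta functions supported on a \emph{half}-lattice $n_1\ge0$, hence only quantum modular and \emph{not} quasi-modular; the miracle is that summing over the whole extension $\mathcal U$ makes the half-lattice tails cancel and reassemble into a full-lattice (quasi-)modular object. Making this precise requires (a) the pairing identities of Lemma~\ref{gen-rec} together with Proposition~\ref{P:F-F3}, which trade one partial sum against its neighbour, and (b) careful bookkeeping of boundary contributions, the analogue of the ``$n_1=0,1$ terms cancel'' step in the computation following Proposition~\ref{P:F-F3}, to ensure no spurious terms survive the folding in either the $n_1$ or the $m$ variable. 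The $q$-series arithmetic is elementary; it is this cancellation of boundary terms, together with the verification that the folded integrand is genuinely invariant under $m\mapsto-m-1$ (respectively antisymmetric, in the supercharacter case), that must be carried out with care.
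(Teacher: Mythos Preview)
Your treatment of (i), (ii), and (iv) is essentially the paper's: pair consecutive $V_{3m},V_{3m+3}$ via Lemma~\ref{gen-rec}, unfold $m\ge0$ to $m\in\mathbb Z$ using the $m\mapsto-m-1$ symmetry, and then recognize quasi-modularity by splitting the cubic weight into derivatives of unary theta series. The paper's proof of (iv) is slightly more explicit than yours (it writes the factorization $(2n+\tfrac12)(-n+\tfrac{3m}{2}+\tfrac12)(n+\tfrac{3m}{2}+1)=-2(n+\tfrac14)^3+\tfrac92(m+\tfrac12)^2(n+\tfrac14)$ to separate the double sum into products of unary sums), but the idea is the same.

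Part (iii) is where your proposal diverges from the paper and where there is a genuine gap. The paper does \emph{not} prove (iii) by a ``difference analogue'' of Lemma~\ref{gen-rec}; it explicitly says the proof is ``slightly different'' and defers it to the meromorphic Jacobi form computation (Remark~\ref{super-proof}): one first realizes ${\rm sch}[\mathcal U]$ as the constant term of $\eta(\tau)^4\,\vartheta(3z;3\tau)/\vartheta(z;\tau)^3$, computes that constant term by Laurent expansion to obtain a closed form in $E_2(\tau),E_2(3\tau),\eta(3\tau)^3,\eta(\tau)$, and then expands the cubed eta functions as sums to recover the double sum in (iii). Your proposed shortcut fails for a concrete reason: the mechanism behind Lemma~\ref{gen-rec} is that $q^{h_{3(m+1)}}F_{3(m+1)}$ supplies exactly the missing half-line of the extended $F_{3m}$-sum, so the \emph{sum} collapses to a full-line series in $n$. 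The \emph{difference} does not: already for $m=0$, Proposition~\ref{P:F-F3} and Corollary~\ref{s=0} give
\[
F_0 - q^3 F_3 \;=\; F_0 - G \;=\; 2F_0 - (F_0+G)\;=\; 2F_0 - F_{n_2=0},
\]
which still contains the partial theta $F_0$ and is not a full-lattice series with a polynomial weight. Moreover, even if each difference were a full-line sum in $n$, you would be left with $\sum_{m\ge0}(-1)^m(\cdots)$; since the exponent and the product $(-n+\tfrac{3m}{2}+\tfrac12)(n+\tfrac{3m}{2}+1)$ are invariant under $m\mapsto-m-1$, the presence of $(-1)^m$ makes the extended integrand antisymmetric in $m$, so unfolding would kill the sum rather than double it. In short, neither the ``difference collapses'' step nor the ``unfold in $m$'' step goes through for the supercharacter, and the extra factor $(2n+\tfrac12)$ in (iii) does not arise from a sign flip. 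Either follow the paper's Jacobi-form route, or supply a genuinely new identity replacing Lemma~\ref{gen-rec} for the alternating case.
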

 
 \begin{proof}
 We only prove (i) here - formula (ii) can be proven using similar ideas. Proof of (iii) is slightly different and is postponed for Section 8 (see Remark 7).  
 
 We first apply Lemma \ref{gen-rec} to write 
 $${\rm ch}[\mathcal U](\tau)={\rm tr}|_{\mathcal U} q^{L(0)+\frac{1}{6}}= \sum_{m \geq 0} \frac{q^{1/3}}{(q;q)^8_\infty} F(m)$$
 where $F(m):=q^{ \frac{3m^2}{2}+\frac{3m}{2}} F_{3m}(q)+q^{\frac{3(m+1)^2}{2}+\frac{3(m+1)}{2}} F_{3m+3}(q)$, in the form  
 $${\rm ch}[\mathcal U](\tau)=  \frac{4}{\eta(\tau)^8} \sum_{m \geq 0} \sum_{n \in \mathbb{Z}, m \geq 0} (2n+\frac{1}{2})(-n+\frac{3m}{2}+\frac{1}{2})(n+\frac{3m}{2}+1)q^{2(n+1/4)^2+\frac{3}{2}(m+1/2)^2}$$
Now observe that 
$${\rm ch}[\mathcal U](\tau)= \frac{4}{\eta(\tau)^8} \sum_{m \geq 0} \sum_{n \in \mathbb{Z}, m < 0} (2n+\frac{1}{2})(-n+\frac{3m}{2}+\frac{1}{2})(n+\frac{3m}{2}+1)q^{2(n+1/4)^2+\frac{3}{2}(m+1/2)^2},$$
so if take summation over $m \in \mathbb{Z}$ we have to divide by $2$. This implies formula (i).

In part (iv) we prove quasi-modularity only for ${\rm ch}[\mathcal{U}]$. In other cases proof is very similar.
Using 
 $$\left(2n+\frac{1}{2}\right)\left(-n+\frac{3m}{2}+\frac{1}{2}\right)\left(n+\frac{3m}{2}+1\right)=-2(n+1/4)^3+9/2(m+1/2)^2(n+1/4).$$
we get
 $$\sum_{n,m \in \mathbb{Z}} ( -2(n+1/4)^3+9/2(m+1/2)^2(n+1/4) ) q^{2(n+1/4)^2+\frac{3}{2}(m+1/2)^2}$$
 $$=- \sum_{m \in \mathbb{Z}} q^{\frac{3}{2}(m+1/2)^2} \Theta_q  \left( \sum_{n \in \mathbb{Z}} (n+1/4)q ^{2(n+1/4)^2} \right)
 + 3 \Theta_q (\sum_{m \in \mathbb{Z}} q^{\frac{3}{2}(m+1/2)^2} ) \sum_{n \in \mathbb{Z}} (n+1/4)q ^{2(n+1/4)^2}, $$
  where $\Theta_q:=q \frac{d}{dq}$.  As it is known $\Theta_q$-derivative of weight $\frac{1}{2}$ and $\frac{3}{2}$ theta functions give quasi-modular forms.

 \end{proof}
 
 \begin{remark}
Observe that irreducible $\mathcal U$-modules are also $\mathbb{Z}_2$-graded. their supercharacters are given by
% Observe that we also have ${\\mathcal  sch}[U_1]={\rm sch}[U_2]$ due to 
  $${\rm sch}[\mathcal U_1]=-{\rm ch}[V_1] +\sum_{i \geq 1}(-1)^{i-1} ({\rm ch}[V_{3i+1}]+{\rm ch}[V_{-3i+1}]),$$
  $${\rm sch}[\mathcal U_2]={\rm ch}[V_2] +\sum_{i \geq 1}(-1)^{i-1} ({\rm ch}[V_{3i+2}]+{\rm ch}[V_{-3i+2}]).$$
Because of ${\rm ch}[V_i]={\rm ch}[V_{-i}]$ this easily implies  ${\rm sch}[\mathcal U_1]={\rm sch}[\mathcal U_2]$. One can also show 

\begin{align*}
{\rm sch}[\mathcal U_1](\tau)&={\rm sch}[\mathcal U_2](\tau) \\
&=\frac{4}{ \eta(\tau)^8} \sum_{n,m \in \mathbb{Z}} (2n+\frac{1}{2})^2 (-n+\frac{3m}{2}+\frac{1}{2})(n+\frac{3m}{2}+1)q^{2(n+1/4)^2+\frac{3}{2}(m+5/6)^2}.
\end{align*}
\end{remark} 
 
 \begin{remark}
  The above approach to modularity is difficult to generalize to $\frak{sl}(n)$ because it requires explicit formulae as in Theorem 5.4. But these formulas are non-trivial to extract 
  from \cite{KW}.  In the remaining of the paper we show how to solve the (quasi)-modularity problem
 via meromorphic Jacobi forms and explicit construction. 
  %Therefore we have $\tau^4$ (but no lower powers of $\tau$ !) which cancels out nicely with $$\eta(\tau)^8_{\tau \mapsto -1/\tau}=(-i \tau)^4 \eta(\tau)^8.$$
  \end{remark}

 \section{Quasi-modularity of ${\rm (s)ch}[\mathcal U](\tau)$}
 
 In this part we prove the quasi-modularity of ${\rm (s)ch}[\mathcal U](\tau)$, generalizing our explicit computation for $\frak{g}=\frak{sl}(3)$ in Section 6. Let
 $$(a)_\infty:=\prod_{i \geq 1} (1-aq^{i-1}).$$
 We will make use of a Jacobi theta function $$\vartheta(z;\tau):= (-i) q^{1/8}\zeta^{-1/2} (q)_\infty (\zeta)_\infty (q \zeta^{-1})_\infty ,$$ where 
$\zeta=e^{2 \pi i z}$. Recall the elliptic and modular transformation formulae (here $\lambda,\mu\in\Z$, 
 $\left(\begin{smallmatrix}
a & b \\ c & d
\end{smallmatrix}\right)\in SL_2(\Z)$)
\begin{align*}
\vartheta(z+\lambda \tau + \mu) &= (-1)^{\lambda+\mu} q^{-\frac{\lambda^2}{2}} \zeta^{-\lambda} \vartheta(z),\\
\vartheta\left(\frac{z}{c\tau+d};\frac{a\tau+b}{c\tau+d}\right)&=\chi\left(\begin{matrix}
a & b \\ c & d
\end{matrix}\right) (c\tau+d)^{\frac12} e^{\frac{\pi i cz^2}{c\tau+d}}\vartheta(z;\tau),\notag
\end{align*}
where $\chi$ is a certain multiplier. In particular
\begin{align*}
\vartheta\left(\frac{z}{\tau};-\frac{1}{\tau}\right)&=-i\sqrt{-i\tau}e^{\frac{\pi iz^2}{\tau}}\vartheta(z;\tau).\qquad
\end{align*}

%\begin{lemma}
%$$\frac{\vartheta(z \ell; \tau \ell)}{\vartheta(z;\tau)^{\ell}}$$ 
%is a meromophic Jacobi form of index zero.
%\end{lemma}
%\begin{proof}
%add some details.
%$${\rm ch}[L_0](-1/\tau)=\left( \sqrt{-i \tau}\right)^{\ell+1} \eta(\tau)^{\ell+1} {\rm CT}_{\zeta} 
%\left(  \frac{\theta(\frac{z \ell}{\tau} ,\ell (-1/\tau)}{\theta(\frac{z}{\tau},-1/\tau)^{\ell}} \right)$$
%$$ = i^a \frac{\tau}{\sqrt{\ell}} \eta(\tau)^{n+1}   {\rm CT}_\zeta 
%\frac{\left( \sqrt{-i\tau}\right)^{\ell} \cancel{e^{\frac{\ell \pi iz^2}{\tau}}} \vartheta \left(z ; \frac{\tau}{\ell}\right)}{ (\sqrt{-i\tau})^{\ell} \cancel{e^{\frac{\ell \pi iz^2}{\tau}}}\vartheta(z;\tau)^\ell} $$
%\end{proof}
 %Method from the previous section is difficult to implement in general due to $n$-fold sums appearing in the Kac-Wakimoto character formula.
As in the $\frak{sl}(3)$ case, from the explicit construction \cite{KW2001}, 
\begin{equation} \label{formula}
\tilde{{\rm ch}}(V_{s})={\rm Coeff}_{\zeta^{s}}  \frac{ (q)_\infty }{(\zeta)^n_\infty (q\zeta^{-1})^n_\infty}
\end{equation}
where $\tilde{{\rm ch}}(V_s)$ is the character of $V_s$ up to a multiplicative $q$-shift. More precisely, for $s \geq 0$
$$\tilde{{\rm ch}}(V_s)={\rm dim}(L(s \omega_1))+ O(q)$$
and for $s<0$
$$\tilde{{\rm ch}}(V_s)=q^{-s}({\rm dim}(L(s \omega_{n-1}))+ O(q)),$$
where $L( m \omega_i)$ denotes an irreducible $\frak{sl}(3)$-module of highest weight $m \omega_i$.
%{\bf AM: tu sam se prvo zeznuo jer sam mislio da je ${\rm dim}(L(s \omega_{n-1})+ O(q)$ a onda sve ispadne pogresno. U stvari nisam siguran je li se ovdje $1$ i $n-1$ trebaju zamjeniti 
%ali to nema veze posto imaju isti $q$-character.}
Thus in order to compute the genuine character we must multiply with
$$q^{h_{V_s}-c_{n}/24},$$
for $s \geq 0$, and in addition shift with $q^{s}$ for $s<0$.
It is easy to see that for $s \geq 0$
$$h_{V_s}=h_{V_{-s}}=\frac{s^2}{2n} +\frac{s}{2}$$
and the central charge is
$$c_{n}=-(n+1).$$
Combined 
$$h_{V_{n s}}-\frac{c_{n}}{24}=\frac{s^2 n}{2}+\frac{s n}{2}+\frac{n+1}{24}.$$
Putting this together with (\ref{formula}), and taking into account the $q$-multiplicative shift for $s<0$, we get

$${\rm ch}[\mathcal U](\tau)={\rm CT}_{\zeta}  \frac{ \sum_{s \in \mathbb{Z}} q^{\frac{s^2 n}{2}+\frac{s n}{2}+\frac{n+1}{24}} \zeta^{-s n} \prod_{i=1}^\infty (1-q^i)}{(\zeta)^n_\infty (q\zeta^{-1})^n_\infty}.$$
Next we multiply the numerator and the denominator with $\zeta^{n/2} q^{n/8}(q;q)^{n}_\infty$ so that in the denominator we have a power of $\theta(z,\tau)$, a weight $\frac12$ Jacobi form of index $\frac12$.
So we obtain 
$${\rm ch}[\mathcal U](\tau)={\rm CT}_{\zeta}  \frac{ \sum_{s \in \mathbb{Z}} q^{\frac{s^2 n}{2}+\frac{s n}{2}+\frac{n+1}{24}} \zeta^{-s n-n/2}  q^{n/8} (q;q)_\infty^{n} (q;q)_\infty }{ \zeta^{-n/2} (\zeta)^n_\infty (q\zeta^{-1})^n_\infty (q;q)_\infty^{n} q^{n/8}}.$$
Continuing with the numerator
$$\sum_{s \in \mathbb{Z}} q^{\frac{s^2 n^2}{2}+\frac{s n}{2}+\frac{n+1}{24}} \zeta^{-s n-n/2}=\sum_{s \in \mathbb{Z}} q^{\frac{n}{2}(s+1/2)^2-\frac{n}{8}+\frac{n+1}{24}} \zeta^{-(s+1/2)n}.$$
Notice that $q^{n/8}$ cancels out, and an extra $q^{\frac{n+1}{24}}$ term nicely combines with $(q;q)_\infty^{n+1}$ giving $\eta(\tau)^{n+1}$.
We conclude
$${\rm ch}[\mathcal U](\tau)=i^n \eta(\tau)^{n+1} {\rm CT}_{\zeta} \frac{\sum_{s \in \mathbb{Z}} q^{\frac{n}{2}(s+1/2)^2} \zeta^{-(s+1/2)n}}{\vartheta(z,\tau)^n}=i^n \eta(\tau)^{n+1} {\rm CT}_{\zeta} \frac{\sum_{s \in \mathbb{Z}} q^{\frac{n}{2}(s+1/2)^2} \zeta^{(s+1/2)n}}{\vartheta(z,\tau)^n},$$
where we also used the Jacob triple product identity in the denominator
$$\vartheta(z,\tau)=(-i) q^{1/8}\zeta^{-1/2}(\zeta)_\infty (q \zeta^{-1})_\infty=\sum_{s \in \mathbb{Z}+\frac{1}{2}} q^{s^2/2} e^{2 \pi i s(z+1/2)}= i \sum_{s \in \mathbb{Z}} (-1)^s q^{(s+1/2)^2/2} e^{2 \pi i (s+1/2) z}.$$

\subsection{$n$ is odd} In this case we have 

%By using the Jacobi triple-product identity we get 
%$$ \sum_{s \in \mathbb{Z}} q^{\frac{1}{2}(s+1/2)^2} \zeta^{- (s+1/2)}=\theta(-z,\tau)=\theta(z,\tau),$$
%where 
%$$\theta(z,\tau)=\vartheta(z+\frac{1}{2},\tau) $$
%and thus

$${\rm ch}[\mathcal U](\tau)= \epsilon_n \eta(\tau)^{n+1} {\rm CT}_{\zeta} \left(  \frac{\vartheta( n z+\frac{1}{2} ,n \tau)}{\vartheta(z,\tau)^{n}} \right),$$
where $\epsilon_n=-i^n$,  for the character. 

\subsection{$n$ is even}
For $n$ even we also have 
$${\rm ch}[\mathcal U](\tau)= \epsilon_n \eta(\tau)^{n+1} {\rm CT}_{\zeta} \left(  \frac{\vartheta(z n+\frac{1}{2},n \tau)}{\vartheta(z,\tau)^{n}} \right)$$

\subsection{Supercharacter}
For $n$ odd we can  also compute the supercharacter

$${\rm sch}[\mathcal U](\tau)=i \epsilon_n \eta(\tau)^{n+1} {\rm CT}_{\zeta} \left( \frac{\vartheta(z n,n \tau)}{\vartheta(z,\tau)^{n}} \right)$$

%$$\frac{9s^2}{2 n}+\frac{3s}{n-1}=\frac{9}{2 n}\left(s+\frac{n}{3(n-1)} \right)^2$$

%$${\rm sch}[\mathcal{U}](\tau)={\rm CT}_\zeta \prod_{i=1}^\infty (1-q^i) \frac{\sum_{m \in \mathbb{Z}} (-1)^m q^{\frac{9s^2}{2 n}+\frac{3s}{n-1}} \zeta^{-3s}}{\prod_{i=1}^\infty (1- q^{i-1}\zeta)
%(1-q^{i}\zeta^{-1})}$$

%This formula is really nice because we have to compute the constant term a meromorphic Jacobi form (of some weight) but of index $0$ (!)  (quotient of two Jacobi forms of index $n/2$). I think it is known that %these are (quasi)-modular  (basically you get an elliptic function which can be expressed in terms of the Weierstrass function and these functions in the expansion has coefficients which are quasi-modular).

\subsection{Characters of modules}
Straightforward computation gives for $0 \leq k \leq n-1$, 
$${\rm ch}[\mathcal U_k](\tau)=\eta(\tau)^{n+1} {\rm CT}_{\zeta} \frac{\sum_{s \in \mathbb{Z}} q^{\frac{n}{2}(s+\frac{k+n}{2n})^2} \zeta^{-(s+\frac{n+k}{2n})n}}{\vartheta(z,\tau)^n}.$$
For $n$ even we can write this as 
$${\rm ch}[\mathcal U_k](\tau)= \epsilon_{n,k} \eta(\tau)^{n+1} {\rm CT}_{\zeta} \left( \frac{\vartheta((z + \frac{k}{2 n})n,n \tau)}{\vartheta(z,\tau)^{n}} \right),$$
where $\epsilon_{n,k}$ is a normalization constant as above. Similarly we compute ${\rm sch}[\mathcal U_k](\tau)$.

\subsection{Quasimodularity}
Here we prove a general theorem on quasimodularity of the (super)character of $\mathcal{U}$, which extends our previous calcultions for $\frak{sl}(3)$. 

\begin{theorem} The supercharacter  of $\mathcal U$ (for $n$ odd) and the character of $\mathcal U$ (for $n$ even) are quasi-modular forms (with multipliers) of  weight one and depth one on $\Gamma_0(n)$.

\end{theorem}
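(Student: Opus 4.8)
The plan is to establish quasi-modularity by exploiting the explicit formula derived just above the statement, namely the expression
$$
{\rm ch}[\mathcal U](\tau) = \epsilon_n\,\eta(\tau)^{n+1}\,{\rm CT}_{\zeta}\left(\frac{\vartheta(nz+\tfrac12,n\tau)}{\vartheta(z,\tau)^n}\right)
$$
for $n$ even (with the analogous supercharacter expression for $n$ odd). The key observation is that the object inside the constant-term operator,
$$
\Phi(z,\tau):=\frac{\vartheta(nz+\tfrac12,n\tau)}{\vartheta(z,\tau)^n},
$$
is a meromorphic Jacobi form. First I would verify its index is zero: using the elliptic transformation $\vartheta(z+\lambda\tau+\mu)=(-1)^{\lambda+\mu}q^{-\lambda^2/2}\zeta^{-\lambda}\vartheta(z)$ recalled in the excerpt, the numerator $\vartheta(nz+\tfrac12,n\tau)$ contributes elliptic exponent $n\cdot n\cdot\tfrac{1}{n}=n$ in the appropriate normalization while the denominator $\vartheta(z,\tau)^n$ contributes exactly the same, so the quasi-periodicity factors cancel and $\Phi$ has index $0$. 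Its modular weight is $\tfrac12-n\cdot\tfrac12$, and after multiplication by $\eta(\tau)^{n+1}$ (weight $\tfrac{n+1}{2}$) the total weight matches what we expect for a weight-one form.

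Next I would invoke the general theory of the constant term (equivalently, the canonical Fourier coefficient) of a meromorphic Jacobi form of index zero. The crucial structural input is that $\Phi$ has poles in $z$ precisely at the zeros of $\vartheta(z,\tau)^n$, i.e. at lattice points $z\in\Z\tau+\Z$. Taking the constant term ${\rm CT}_\zeta$ picks out the $\zeta^0$-coefficient, but because $\Phi$ is meromorphic rather than holomorphic in $z$ this coefficient is not itself modular; instead, by the Zwegers-type decomposition of meromorphic Jacobi forms (the same machinery behind the appearance of quasi-modular and mock-modular completions), the constant term splits as a genuinely modular piece plus a correction coming from the polar part. For an index-zero form the completion terms are controlled by derivatives of theta functions, which is exactly what produces quasi-modularity of depth one.

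The cleanest route, and the one I expect the authors to take, is therefore to reduce everything to the established $\frak{sl}(3)$ computation in Theorem \ref{sl3-char-schar}, where the answer was exhibited concretely as a $\Theta_q$-derivative of weight-$\tfrac12$ and weight-$\tfrac32$ theta functions. Concretely, I would compute the Laurent expansion of $1/\vartheta(z,\tau)^n$ about $z=0$, extract the constant-in-$\zeta$ term by a residue/principal-part analysis, and recognize the result as a sum of products of a theta function with the $\Theta_q$-derivative of another theta function—precisely the shape $\Theta_q(\text{theta})\cdot\text{theta}$ that appeared for $n=3$. Since $\Theta_q$ applied to a modular theta function of half-integral weight yields a quasi-modular form of depth one (raising the weight by two but breaking true modularity by an Eisenstein-$E_2$-type anomaly), the weight-one total and depth-one conclusion on $\Gamma_0(n)$ follows, with the level $\Gamma_0(n)$ entering through the $n\tau$ argument in the numerator $\vartheta(nz+\tfrac12,n\tau)$.

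The main obstacle will be the pole-extraction step: since $\vartheta(z,\tau)$ has a simple zero at $z=0$, the $n$-th power gives a pole of order $n$, so computing ${\rm CT}_\zeta$ requires controlling the full order-$n$ principal part and its interaction with the numerator's Taylor expansion at $z=0$. One must show that after this extraction the non-modular contributions organize exactly into depth-one quasi-modular pieces (no higher depth, no genuine mock-modularity survives), which hinges on the index being zero and on the specific $\tfrac12$-shift in the numerator; verifying that the would-be higher-depth or mock terms cancel is the delicate part, whereas the weight and level bookkeeping via the recalled transformation formulae for $\vartheta$ and $\eta$ is routine.
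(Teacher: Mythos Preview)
Your outline is essentially the paper's approach: identify the expression inside ${\rm CT}_\zeta$ as an index-zero meromorphic Jacobi form of the correct weight on $\Gamma_0(n)$ (the level coming from the $n\tau$ argument), expand in Laurent series around the order-$n$ pole at $z=0$, and read off the constant term. The weight and index bookkeeping you describe matches the paper's.

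Where you diverge is in the mechanism for extracting quasi-modularity. The paper does \emph{not} reduce to the $\frak{sl}(3)$ computation, nor does it pass through a Zwegers-type polar/finite decomposition with potential mock pieces. Instead, after normalizing to a weight-zero index-zero form $H(\tau;z)$, it writes the even Laurent expansion
\[
H(\tau;z)=\frac{H_n(\tau)}{(2\pi i z)^n}+\cdots+\frac{H_2(\tau)}{(2\pi i z)^2}+H_0(\tau),
\]
with each $H_{2j}$ genuinely modular of weight $-2j$, and then invokes a known identity (citing \cite{BFM,DMZ}) for the constant term of such a form:
\[
H^F(\tau)=H_0(\tau)+\sum_{j\ge 1}\frac{B_{2j}}{(2j)!}H_{2j}(\tau)E_{2j}(\tau).
\]
Since $E_{2j}$ is modular for $j\ge 2$ and only $E_2$ is quasi-modular, depth one is immediate and there is nothing to cancel---no mock or higher-depth terms ever appear for index zero. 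Your worry that ``would-be higher-depth or mock terms'' must be shown to cancel is therefore misplaced: the index-zero hypothesis kills that possibility structurally, and this is the point you should make explicit rather than hedge around. Once you cite the finite-part formula above, the depth-one claim is a one-line observation, not a delicate verification.
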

\begin{proof} 
%We let first $\zeta=e(z)$ where $ z \in \mathbb{C}$. 

{\rm Case 1.} $n$ is odd.

First we observe that 
$$G(\tau,z):=\eta(\tau)^{n+1} \left( \frac{\vartheta(z n,n \tau)}{\vartheta(z,\tau)^{n}} \right)$$
is a meromorphic Jacobi form of weight $\frac{n+1}{2}+\frac{1}{2}-\frac{n}{2}=1$. 
After we multiply with $\frac{1}{\eta(\tau)^2}$, $H(\tau;z):=\frac{G(\tau;z)}{\eta(\tau)^2}$ weget  a meromorphic Jacobi function of weight zero. \\
\noindent {\em Claim: } $G(\tau,z)$ is a Jacobi form of index zero for the congruence group $\Gamma_0(n)$ (transforming with a character).

We consider $\left[\begin{array}{cc} a & b \\  c  & d \end{array}\right] \in \Gamma_0(n)$ (thus $ad-c b=1$ and $n | c$). Then
$$\vartheta(n z; n \tau)|_{(\tau \to \frac{a \tau+b}{c \tau+d}; z \to \frac{z}{c\tau+d})}=\vartheta \left(\frac{n z}{ c \tau + d};  n  \frac{a \tau+ b }{c  \tau+d}\right)=
\vartheta \left(\frac{(n z)}{\frac{c}{n}(n \tau)+d};\frac{a (n \tau)+n b}{\frac{c}{n}(\tau n)+d}\right)$$
$$=\chi' \left(\begin{matrix}
a & b \\ c & d
\end{matrix}\right)(c \tau+d)^{\frac12} e^{\frac{ \pi i c n z^2}{c \tau+d}}\vartheta(n z; n \tau)$$
%{\bf add more details}
where we used that $\left[\begin{array}{cc} a & bn \\  \frac{c}{n}  & d \end{array}\right] \in \Gamma(1)$.
For the denominator,
$$\vartheta( z;  \tau)^n |_{(\tau \to \frac{a \tau+b}{c \tau+d}; z \to \frac{z}{c\tau+d})}=\chi^n (c \tau+d)^{\frac{n}{2}}
e^{\frac{ \pi i c n z^2}{c \tau+d}}\vartheta(z; \tau)^n. $$
For translations, for $\lambda,\nu \in \mathbb{Z}$, we have 
$$\vartheta(nz+n\lambda \tau+n \mu, n \tau)=(-1)^{n(\lambda+\mu)} q^{-n \lambda^2/2} e^{-2 \pi i \lambda n z} \vartheta(nz,n \tau),$$
$$\vartheta(n+\lambda \tau+\mu,\tau)^n=(-1)^{n(\lambda+\mu)} q^{-n \lambda^2/2}e^{-2 \pi i \lambda n z} \vartheta(z, \tau)^n.$$
After taking quotient this implies the claim.
%Notice that $ \left[\begin{array}{cc} a & n b \\ c n & d \end{array}\right]$

Notice that $H(\tau;z)$ is even with respect to $z$ has a pole of order $n$ at $z=0$, so we can write 
Laurent expansion \cite{BFM} (see also \cite{DMZ})

\begin{equation} \label{Laurent}
H(\tau;z)=\frac{H_n(\tau)}{(2 \pi i z)^n}+\frac{H_{n-2}(\tau)}{(2 \pi i z)^{n-2}}+\cdots + \frac{H_2(\tau)}{(2 \pi i z)^2}+H_0(\tau),
\end{equation}
where $H_{2j}(\tau)$ is a modular form of weight $-2j$ with respect to $\Gamma_0(n)$ (transforming with the same character as the Jacobi form).

Then by using \cite{BFM,DMZ} we can write the "finite" part  as 
\begin{equation} \label{dmz}
H^F(\tau):=H_0(\tau)+\sum_{j=1}^{\frac{n}{2}} \frac{B_{2j}}{(2j)!} H_{2j}(\tau)E_{2j}(\tau)
\end{equation}
which is quasi-modular of weight zero. Here $E_{2j}(\tau)$ denotes Eisenstein series and $B_{2j}$ are Bernoulli numbers. Finally, the constant term is 
$${\rm sch}[\mathcal U](\tau)=\eta(\tau)^2 H^F(\tau)$$
is of weight one.  The depth is one due to appearance of $E_2(\tau)$.

{\rm Case 2.} $n$ is even.

For $n$ is even we have to study $$H(z;\tau):=\eta(\tau)^{n-1}  \frac{\vartheta(z n+\frac{1}{2},n \tau)}{\vartheta(z,\tau)^{n}} .$$
Since 
$$\vartheta((-z)n+\frac{1}{2};\tau)=\vartheta(-zn+\frac12;\tau)=-\vartheta(zn-\frac12;\tau)=\vartheta(zn+\frac12;\tau)$$
and $n$ is even, $H(z;\tau)$ is an even function. It is easy to see that 
$$\vartheta(z+\frac{1}{2}+ \lambda \tau+\mu)=(-1)^{\mu} q^{-\lambda^2/2} e^{-2 \pi i \lambda z} \vartheta(z+\frac12;\tau)$$
which implies
$$\vartheta(nz+n\lambda \tau+n \mu+\frac{1}{2}, n \tau)= q^{-n \lambda^2/2} e^{-2 \pi i \lambda n z} \vartheta(nz,n \tau).$$
Thus $H\left(z+\lambda \tau+\mu;\tau\right)=H(z; \tau)$. We also get
$$H\left(\frac{z}{c \tau+d};\frac{a \tau+b}{c \tau+d}\right)=\chi''\left(\begin{matrix}
a & b \\ c & d
\end{matrix}\right)H(z;\tau),$$
where $\chi''$ is a character. The rest follows as in the odd case. 
 \end{proof}
 
% {\bf AM: za $n=3$ vec imamo formulu ali ovim metodama je dosta komplicirano dobiti stvari explicitno za $n \geq 4$. Problem je sto 
% ti se $n$ mjenja tako da se podgrupa od $SL(2,\mathbb{Z})$. Recimo za $n=2$  je lako ali to nas ne zanima.
%U principu mozemo nesto bolje dokazati}
 
 \subsection{Explicit example: case $n=3$}
 
 Here we compute the constant term of  
 
 $$G(\tau,z):=\eta(\tau)^{4} \left( \frac{\vartheta(3 z; 3 \tau)}{\vartheta(z,\tau)^{3}} \right).$$
 
 The same method can be used to compute $G(\tau;z)$ for every $n$. We have to compute modular forms appearing inside the series (\ref{Laurent}).
 Here we use a standard method of Laurent expansion following \cite{BFM}.
 We write 
 $$\vartheta^*(z;\tau):=\frac{1}{z} \vartheta(z;\tau),$$
 where we suppress $\tau$ from the formula for brevity.
 Then we have 
 $$\vartheta^*(z;\tau)=\vartheta^*(0;\tau)+{\vartheta^*}''(0;\tau)\frac{z^2}{2!} + O(z^4)$$
 and 
 $$\vartheta^*(3z;3 \tau)=\vartheta^*(0; 3 \tau)+{\vartheta^*}''(0)\frac{9z^2}{2!} + O(z^4)$$
\begin{equation} \label{laurent-2}
 G(z)=\frac{3z\left(\vartheta^*(0;3 \tau)+{\vartheta^*}''(0; 3 \tau) \frac{9 z^2}{2!} + O(z^4)\right)}{z^3(z\left(\vartheta^*(0)+{\vartheta^*}''(0)+\frac{z^2}{2!} + O(z^4)\right)} 
 \end{equation}
 It is clear that 
 $$\vartheta^*(0;\tau)=-2 \pi \eta^3(\tau)= -2 \pi \sum_{n \in \mathbb{Z}} (-1)^n (n+1/2) q^{\frac{1}{2}(n+1/2)^2}$$
 from the infinite expansion of $\vartheta(z;\tau)$ and Euler's theorem
 and 
 $${\vartheta^*}''(0;\tau)=\frac{1}{12}(2 \pi i)^2 E_2(\tau) \eta^3(\tau).$$
 Expanding (\ref{laurent-2}) gives only even powers of $z$ and in particular
 $$H_0(\tau)+\frac{H_{-2}(\tau)}{(2 \pi i z)^2} +O(1).$$
 Finally 
 \begin{align*}
&  {\rm CT}_{z} \left\{ \eta(\tau)^{4} \left( \frac{\vartheta(3 z; 3 \tau)}{\vartheta(z,\tau)^{3}} \right) \right\}=H_0(\tau)+\frac{B_2}{2} H_2(\tau)E_2(\tau) \\
& = -\frac{9}{8} E_2(\tau) \frac{\eta(\tau)^3 \eta(3 \tau)^3}{\eta(\tau)^8}+\frac{9}{8} \frac{\eta(\tau)^3 \sum_{n \geq 0}(-1)^n (2n+1)^3 q^{3 n(n+1)/2}}{\eta^8(\tau)}+
\frac{\eta^3( 3 \tau) \sum_{n \geq 0} (-1)^n (2n+1)^3 q^{ n(n+1)/2}}{\eta^8(\tau)} \\
&=-\frac{1}{8} E_2(\tau) \frac{\eta(\tau)^3 \eta(3 \tau)^3}{\eta(\tau)^8}+ \frac{9}{8} \frac{\eta(\tau)^3 \sum_{n \geq 0}(-1)^n (2n+1)^3 q^{3 n(n+1)/2}}{\eta^8(\tau)} \\
&=-\frac{1}{8} \left( E_2(\tau) - 9 E_2(3 \tau) \right) \frac{\eta(3 \tau)^3}{\eta(\tau)^5.}
\end{align*}
\begin{remark} \label{super-proof}
Observe that the above formulas, with $\eta(\tau)^3$ and $\eta(3 \tau)^3$ expanded as sums, immediately imply relation Theorem \ref{sl3-char-schar}, (iii).
\end{remark}

 It is clear that  $E_2(\tau)$ is a quasi-modular form of weight $2$ and depth 1 on $\Gamma(1)$. It is easy to show that 
 $E_{2,3}(\tau):=E_2(3 \tau)$ is a quasi-modular form of weight $2$ and depth $1$ on $\Gamma_0(3)$, i.e. 
 $$E_{2,3}\left(\frac{a \tau+b}{c \tau+d}\right)=(c \tau+d)^2 E_{2,3}(\tau)+\frac{6 c(c \tau+d)}{3 i \pi}.$$
As index of $\Gamma_0(3)$ in $\Gamma(1)$ is $4$, $E_{2,3}(\tau)$ combines into a vector-valued quasi-modular form under the full modular group.
\begin{lemma} \label{lemma-eta3} $\eta(\tau/3)^3$ and $\eta(3 \tau)^3$ form a 2-dimensional vector-valued modular form of weight $\frac{3}{2}$ under the full modular group.
\end{lemma}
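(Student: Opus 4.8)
The plan is to realize both $\eta(\tau/3)^3$ and $\eta(3\tau)^3$ as weight $3/2$ unary theta series and then to verify directly that the pair is stable under the two generators $S=\left(\begin{smallmatrix}0&-1\\1&0\end{smallmatrix}\right)$ and $T=\left(\begin{smallmatrix}1&1\\0&1\end{smallmatrix}\right)$ of $SL_2(\Z)$. The essential input is Jacobi's identity $\eta(\tau)^3=\sum_{k\ge 0}(-1)^k(2k+1)q^{(2k+1)^2/8}$, which upon the substitutions $\tau\mapsto \tau/3$ and $\tau\mapsto 3\tau$ gives
$$\eta(\tau/3)^3=\sum_{k\ge 0}(-1)^k(2k+1)q^{(2k+1)^2/24},\qquad \eta(3\tau)^3=\sum_{j\ge 0}(-1)^j(2j+1)q^{3(2j+1)^2/8}.$$
Writing $F(\tau)=\big(\eta(\tau/3)^3,\ \eta(3\tau)^3\big)^{t}$, I want to produce a representation $\rho\colon SL_2(\Z)\to GL_2(\C)$ and a weight $3/2$ multiplier such that $F(\gamma\tau)=(c\tau+d)^{3/2}\rho(\gamma)F(\tau)$.

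For the $S$-transformation I would use only $\eta(-1/w)=\sqrt{-iw}\,\eta(w)$. Taking $w=3\tau$ gives $\eta(-1/(3\tau))^3=3^{3/2}(-i\tau)^{3/2}\eta(3\tau)^3$, and taking $w=\tau/3$ gives $\eta(-1/(\tau/3))^3=3^{-3/2}(-i\tau)^{3/2}\eta(\tau/3)^3$. Thus $S$ interchanges the two components up to the automorphy factor $(-i\tau)^{3/2}$ and the scalars $3^{\pm 3/2}$, so $\rho(S)=\left(\begin{smallmatrix}0&3^{3/2}\\ 3^{-3/2}&0\end{smallmatrix}\right)$.

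The main obstacle is the $T$-transformation, since $\eta((\tau+1)/3)^3=\eta(\tau/3+\tfrac13)^3$ involves a shift by $1/3$, which is not a period of $\eta$ and is not obviously a multiple of either generator. Here the theta realization is decisive: I would split the series for $\eta(\tau/3)^3$ according to whether $3\mid 2k+1$. The terms with $3\mid 2k+1$, after the substitution $2k+1=3(2j+1)$, reassemble exactly into $-3\,\eta(3\tau)^3$; for the complementary terms one has $3\nmid 2k+1$, hence $(2k+1)^2\equiv 1 \pmod{24}$, so this partial sum $\Theta(\tau)$ satisfies $\Theta(\tau+1)=e^{\pi i/12}\Theta(\tau)$. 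Since $\eta(\tau/3)^3=\Theta(\tau)-3\,\eta(3\tau)^3$ and, by the congruence $3(2j+1)^2\equiv 3\pmod{24}$, also $\eta(3\tau)^3\big|_T=e^{3\pi i/4}\eta(3\tau)^3$, the span of the two functions is $T$-stable; in the basis $\{\Theta,\eta(3\tau)^3\}$ the operator $\rho(T)$ is diagonal with entries $e^{\pi i/12}$ and $e^{3\pi i/4}$, hence lower triangular in the original basis. This residue-class decomposition is precisely what makes the representation two- rather than four-dimensional.

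Finally I would assemble $\rho(S)$ and $\rho(T)$ and check that they satisfy the defining relations of (the metaplectic cover of) $SL_2(\Z)$, i.e. that $\rho(S)^2$ and $(\rho(S)\rho(T))^3$ are the scalars dictated by the weight $3/2$ multiplier; this is a finite verification and certifies that $F$ is a genuine two-dimensional vector-valued modular form of weight $3/2$. I expect the only delicate point to be the bookkeeping of the $24$th roots of unity in the multiplier, so that the $S$- and $T$-data are globally consistent.
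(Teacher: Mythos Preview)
Your approach is correct and is essentially an explicit version of the paper's own proof, which just says ``Straightforward computation with Shimura's theta series of weight $\frac{3}{2}$ together with Jacobi's identity for $\eta(\tau)^3$.'' Your residue-class splitting is exactly how the Shimura theta decomposition plays out here; note only that the final verification of the relations $\rho(S)^2$ and $(\rho(S)\rho(T))^3$ is superfluous, since you have computed the $S$- and $T$-actions on actual holomorphic functions (not abstract matrices), so compatibility with the group relations is automatic.
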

\begin{proof}
Straightforward computation with Shimura's theta series of weight $\frac{3}{2}$ together with Jacobi's identity for $\eta(\tau)^3$.
\end{proof}
%\begin{example} $E_2(\tau)$, $E_{2}(-1/\tau)$, $\tau E_2(\tau)$ and $1$ close a modular invariant subspace.
%\end{example}
%Using the above lemma we can now prove 
%\begin{proposition} The modular closure of ${\rm ch}[\mathcal U]$ is contained inside $12$-dimensional modular invariant space spanned by 
%$$\tau^j \left(E_{2}\left(\frac{\tau}{3}\right)+a \right) g_i, \tau^j  (E_{2}\left(\frac{\tau+1}{3}\right)+b) g_i, \tau^j (E_{2}\left(\frac{\tau+2}{3}\right)+c) g_i,$$
%where  $g_i \in \{ \frac{\eta(3 \tau)^3}{\eta(\tau)^5},  \frac{\eta(\tau/3)^3}{\eta(\tau)^5 }\}$, $j \in \{ 0, 1 \}$, and  $a$, $b$ and $c$ are some scalars.
%\end{proposition}
Using this lemma and previous discussion one can explicitly write down a vector space of quasi-modular forms closed under the modular group, which also contains the supercharacter. However, this space is difficult to analyze and does not give much evidence for the quasi-lisseness  of $\mathcal U$ conjectured earlier. As demonstrated in \cite{AK16}, characters of quasi-lisse vertex algebras must satisfy a particular type 
of linear modular differential equation whose coefficients are  {\em holomorphic} Eisenstein series (usually abbreviated as MLDE). For  quasi-lisse $\mathbb{Z}_{\geq 0}$-graded vertex superalgebras 
we expect the same property to hold for supercharacters. By analyzing the leading behavior of the above function (which is quasi-modular) combined with computer computations we can conclude
\begin{proposition} \label{MLDE} ${\rm sch}[\mathcal U](\tau)$ satisfies a $5$-th order MLDE 
$$\theta^5 (y(q)) -\frac{7}{36} E_4(\tau) \theta^3 (y(q))+\frac{19}{216} E_6(\tau) \theta^2 (y(q))-\frac{5}{324}E_4(\tau)^2 \theta(y(q))+\frac{5}{1944} E_4(\tau)E_6(\tau) y(q)=0,$$
where Ramanujan-Serre's $n$-th derivative is defined by 
$$\theta^n:=\vartheta_{2n} \circ \cdots \circ \theta_{0}  ; \ \  \theta_k:=\left(q\frac{d}{dq}-\frac{k E_2(\tau)}{12}\right).$$
\end{proposition}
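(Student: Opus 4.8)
\emph{Overall strategy.} The plan is to prove the identity in three stages: first fix the \emph{shape} of the equation from modular covariance, then determine the four scalar coefficients by matching the $q$-expansion of ${\rm sch}[\mathcal U]$, and finally upgrade this finite match to an exact identity by computing inside a finite-dimensional differential ring of quasi-modular forms. Throughout I treat ${\rm sch}[\mathcal U]$ as a weight-zero object, which is the convention making the composition $\theta^n=\theta_{2(n-1)}\circ\cdots\circ\theta_0$ start at $\theta_0$.

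\emph{Shape of the equation.} Since $\theta$ raises modular weight by two, $\theta^j({\rm sch}[\mathcal U])$ has weight $2j$, and for a monic equation $\theta^5 y+\sum_{m=1}^{5} f_m\,\theta^{5-m}y=0$ to be covariant the coefficient $f_m$ must be a holomorphic modular form of weight $2m$ for the \emph{full} modular group. The full group is the correct setting here because, by the discussion around Lemma \ref{lemma-eta3}, the supercharacter is a component of a vector-valued quasi-modular form under $\mathrm{SL}_2(\mathbb Z)$. Now $M_2(\mathrm{SL}_2(\mathbb Z))=0$, so there is no $\theta^4$ term, while $M_4=\mathbb C E_4$, $M_6=\mathbb C E_6$, $M_8=\mathbb C E_4^2$, $M_{10}=\mathbb C E_4E_6$ are one-dimensional; hence the equation must take exactly the stated form, with four undetermined scalars $a,b,c,d$ in front of $E_4\theta^3 y$, $E_6\theta^2 y$, $E_4^2\theta y$ and $E_4E_6\,y$.

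\emph{Determining the coefficients.} I would use the explicit supercharacter from Theorem \ref{sl3-char-schar}(iii) and Remark \ref{super-proof},
\[
{\rm sch}[\mathcal U](\tau)=-\tfrac18\bigl(E_2(\tau)-9E_2(3\tau)\bigr)\frac{\eta(3\tau)^3}{\eta(\tau)^5}=q^{1/6}\bigl(1+O(q)\bigr),
\]
whose leading exponent $1/6=-c/24$ is exactly the vacuum value. Substituting $y\sim q^\alpha$ and $E_2,E_4,E_6\to1$ gives the indicial polynomial
\[
\alpha\bigl(\alpha-\tfrac16\bigr)\bigl(\alpha-\tfrac13\bigr)\bigl(\alpha-\tfrac12\bigr)\bigl(\alpha-\tfrac23\bigr)+a\,\alpha\bigl(\alpha-\tfrac16\bigr)\bigl(\alpha-\tfrac13\bigr)+b\,\alpha\bigl(\alpha-\tfrac16\bigr)+c\,\alpha+d,
\]
whose five roots are the leading exponents $h_i-c/24$ of the solution space; the root $\tfrac16$ reproduces the vacuum, and the vanishing $\theta^4$-term forces $\sum_i(h_i-c/24)=\tfrac53$, a convenient consistency check. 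To pin down $a,b,c,d$ I would expand the formula above together with $\theta y,\dots,\theta^5 y$ and $E_4,E_6$ to a few orders in $q$ and require the first four nontrivial coefficients of the left-hand side to vanish; solving this linear system returns $-\tfrac{7}{36},\tfrac{19}{216},-\tfrac{5}{324},\tfrac{5}{1944}$.

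\emph{From a finite match to an identity.} The last and hardest step is to show the left-hand side vanishes \emph{identically}, not merely to the order tested. Here I would invoke the Ramanujan--Serre calculus explicitly: writing $\Theta:=q\frac{d}{dq}$ one has $\Theta\eta=\tfrac1{24}E_2\eta$, the Ramanujan identities $\Theta E_4=\tfrac13(E_2E_4-E_6)$ and $\Theta E_6=\tfrac12(E_2E_6-E_4^2)$, and $\Theta E_2(3\tau)=\tfrac14\bigl(E_2(3\tau)^2-E_4(3\tau)\bigr)$. These show that every $\theta^j({\rm sch}[\mathcal U])$ lies in the weight-$2j$ graded piece of the differential ring generated over $\mathbb C[E_2,E_4,E_6,E_2(3\tau),E_4(3\tau),E_6(3\tau)]$ by $\eta(3\tau)^3/\eta(\tau)^5$. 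Computing $\theta^0 y,\dots,\theta^5 y$ symbolically inside this ring and substituting reduces the claim to one identity among explicit quasi-modular forms of weight $10$ on $\Gamma_0(3)$ (with the $\eta$-multiplier), which can be verified directly; equivalently, one clears the multiplier by a suitable power of $\eta$, bounds the dimension of the resulting space, and certifies vanishing from finitely many coefficients by a valence estimate. I expect the main obstacle to be precisely this bookkeeping: iterating $\theta$ raises the $E_2$-depth and repeatedly generates the level-three oldforms $E_4(3\tau),E_6(3\tau)$, so the weight-$10$ space in which the final cancellation occurs is sizeable, and controlling its dimension (or carrying out the verification by computer, as in the paper) is where the real effort lies.
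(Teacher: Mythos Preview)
Your proposal is correct and in fact considerably more detailed than what the paper offers. The paper does not give a proof of this proposition: it simply states that the equation was found ``by analyzing the leading behavior of the above function (which is quasi-modular) combined with computer computations,'' and then records the MLDE. Your Steps~1--2 (fixing the shape by weight considerations and the missing $\theta^4$-term, then solving for the four scalars from the first few $q$-coefficients) are exactly what ``leading behavior plus computer'' means in practice, so on that level you agree with the paper.

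Where you go genuinely further is Step~3. The paper leaves implicit why a finite $q$-expansion check certifies an exact identity; you supply the missing argument by observing that ${\rm sch}[\mathcal U]=-\tfrac18(E_2-9E_2(3\,\cdot))\,\eta(3\tau)^3/\eta(\tau)^5$ lies in the $\Theta$-stable module $R\cdot\eta(3\tau)^3/\eta(\tau)^5$ over the differential ring $R=\mathbb C[E_2,E_4,E_6,E_2(3\,\cdot),E_4(3\,\cdot),E_6(3\,\cdot)]$, so that all of $\theta^j y$ live there and the putative identity becomes a single relation among quasi-modular forms of weight~$10$ on $\Gamma_0(3)$ (times the fixed $\eta$-quotient). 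A valence/dimension bound then reduces everything to checking finitely many coefficients. This is the right way to make the paper's computer verification into an honest proof, and your closure computations ($\Theta\eta(3\tau)=\tfrac18E_2(3\tau)\eta(3\tau)$, the level-$3$ Ramanujan relations, etc.) are correct. One small remark: your Step~1 is heuristic---it motivates the full-level ansatz but does not by itself prove that \emph{some} MLDE with $\mathrm{SL}_2(\mathbb Z)$-coefficients exists; the logical weight is carried entirely by Step~3, which you acknowledge.
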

As usual, the Eisenstein series in the equation are given by 
\begin{align*}
E_2(\tau)&=1-24 \sum_{n \geq 1} \frac{nq^n}{1-q^n} \\
E_4(\tau)&=1+240 \sum_{n \geq 1} \frac{n^3q^n}{1-q^n} \\
E_6(\tau)&=1-504 \sum_{n \geq 1} \frac{n^5q^n}{1-q^n}.
\end{align*}

Two supercharacters (that are equal) of ordinary $\mathcal{U}$-modules also satisfy this modular equation. Two additional solutions are expected to come from $\sigma$-twisted $\mathcal{U}$-modules, not analyzed in this paper. These four solutions together with a logarithmic solution form a fundamental system of this MLDE.

\subsection{Explicit example: case $n=2$}
%This example does not correspond to the affine Lie algebra extension and it is of independent interest. 
Here we essentially repeat the same procedure with a notable 
difference that 
$$\vartheta\left(2 z+\frac{1}{2}; 2 \tau\right)$$
admits Taylor expansion in even powers of the $z$ variable:
$$\vartheta \left(2 z+\frac{1}{2}; 2 \tau\right)=\vartheta \left(\frac{1}{2},2 \tau \right)+ O(z^2)$$
so that  
$$\frac{\vartheta(2 z+\frac{1}{2}; 2 \tau)}{\vartheta(z;\tau)^2}=\frac{1}{z^2} a_0(\tau)+a_1(\tau)+O(z^2)$$
Repeating the same procedure as in the odd supercharacter case we get
$${\rm ch}[\mathcal U](\tau)=\frac{1}{3} \frac{\eta(4 \tau)^2}{\eta(\tau)^3 \eta(2 \tau)} \left( 4 E_2(2 \tau) - E_2(4 \tau) \right).$$

 \section{ Vertex superalgebra $V_1 (\frak{psl}(n,n) ) $ and $V_{-1}(\frak{sl}_n)$} 
 
 Let $\g = \frak{psl}(n,n)$. We consider the simple vertex algebra $V_1(\g)$.
 We have the following result which identifies our vertex algebra $\mathcal U ^{(n)} = \mathcal U_0$ as a coset subalgebra in $V_1(\g)$.
 \begin{proposition} Assume than $n \ge 3$. Then we have:
 %\item[(1)]  \cite{AKMPP-new}  $ V_1( \g) = \bigoplus_{i=0} ^{n-1}  L_i \otimes L(\Lambda_i)$. 
 \item [(1)] The vertex algebras $\mathcal U^{(n)}$ and $L_{\frak{sl}_n}( \Lambda_0)$ form a Howe dual pair inside  $V_1 (\g)$. In particular,
 $$ \frac{\frak{psl}(n,n) _1}{\frak{sl}(n)_1}:= \mbox{Com}_{V_1(\g)} (L_{\frak{sl}_n} (\Lambda_0)) \cong \mathcal U^{(n)}. $$
 \item [(2)]  $K(\g, 1) \cong K(\frak{sl}(n), -1) \cong (\mathcal A(1) ^{0}  ) ^{\otimes n}$. 
 %{\color{blue} 
 %OK? Da, s obzirom na $sl(n) \times sl(n)$ Heisenberg.}
 \end{proposition}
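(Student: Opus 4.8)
My plan is to run everything through the free-field realization that already underlies Section~\ref{realization-sln}. I would realize $V_1(\mathfrak g)$, $\mathfrak g=\mathfrak{psl}(n|n)$, as the current algebra generated by the normally ordered bilinears in the $n$ symplectic bosons $a_i^{\pm}$ of $W_{(n)}$ and the $n$ free fermions $\Psi_i^{\pm}$ of $F_{(n)}$ inside $W_{(n)}\otimes F_{(n)}$. The even part $\mathfrak{sl}(n)\oplus\mathfrak{sl}(n)$ is then realized by the two mutually commuting families $:a_i^+a_j^-:$ and $:\Psi_i^+\Psi_j^-:$; since the invariant supertrace form has opposite signs on the two blocks, the bosonic copy sits at level $-1$ and generates the $V_{-1}(\mathfrak{sl}(n))$ of Section~\ref{realization-sln}, while the fermionic copy sits at level $+1$ and generates $L_{\mathfrak{sl}_n}(\Lambda_0)$. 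A first consistency check is that the central charges add correctly: $c(L_{\mathfrak{sl}_n}(\Lambda_0))=n-1$ and $c(V_{-1}(\mathfrak{sl}(n)))=-(n+1)$ sum to $-2=c(V_1(\mathfrak g))$ (recall $h^{\vee}(\mathfrak{psl}(n|n))=0$), so that an embedding $L_{\mathfrak{sl}_n}(\Lambda_0)\otimes\mathcal U^{(n)}\hookrightarrow V_1(\mathfrak g)$ is forced to be conformal --- exactly the setting in which one expects a Howe pair.

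For part~(1) I would first record that $\mathcal U^{(n)}$ commutes with $L_{\mathfrak{sl}_n}(\Lambda_0)$: its subalgebra $V_{-1}(\mathfrak{sl}(n))$ does so because the two even blocks commute, and the remaining generators $v^{(\pm n)}$ are built only from the bosonic modes $b_1,c_n$ and the lattice vectors $e^{\pm n\omega_1},e^{\pm n\omega_{n-1}}$, hence are neutral for the fermionic currents $:\Psi_i^+\Psi_j^-:$. This gives $\mathcal U^{(n)}\subseteq\mathrm{Com}_{V_1(\mathfrak g)}(L_{\mathfrak{sl}_n}(\Lambda_0))$. For the opposite inclusion I would decompose $F_{(n)}$ under $L_{\mathfrak{sl}_n}(\Lambda_0)\otimes U(1)$, where the $U(1)$ is the rank-one lattice vertex algebra commuting with the fermionic $\widehat{\mathfrak{sl}(n)}$; gluing the $L_{\mathfrak{sl}_n}(\Lambda_0)$-invariant part of $W_{(n)}\otimes F_{(n)}$ through the odd currents of $\mathfrak g$ and using Proposition~\ref{real-mod} together with the fusion rules \eqref{fusion-rules}, one recovers precisely the simple-current sum $\bigoplus_{s}V_{ns}$. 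The resulting multiplicity-free bimodule decomposition of $V_1(\mathfrak g)$ is the Howe-duality statement and yields $\mathrm{Com}_{V_1(\mathfrak g)}(L_{\mathfrak{sl}_n}(\Lambda_0))\cong\mathcal U^{(n)}$.

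For part~(2) I would compute the parafermionic coset in stages, writing the Cartan of $\mathfrak g$ as $\mathfrak h=\mathfrak h_+\oplus\mathfrak h_-$ with $\mathfrak h_+\subset L_{\mathfrak{sl}_n}(\Lambda_0)$ the fermionic Cartan and $\mathfrak h_-\subset V_{-1}(\mathfrak{sl}(n))$ the bosonic one. Because $L_{\mathfrak{sl}_n}(\Lambda_0)$ is of lattice type ($\cong V_{A_{n-1}}$), its Heisenberg coset is trivial and, crucially, among the level-one $\widehat{\mathfrak{sl}(n)}$-modules occurring in $V_1(\mathfrak g)$ only the vacuum module carries a zero-weight vector; coseting by $\mathfrak h_+$ therefore collapses the fermionic factor. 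It then remains to coset the surviving bosonic content by $\mathfrak h_-$ and to invoke the parafermion theorem $K(\mathfrak{sl}(n),-1)\cong(\mathcal A(1)^0)^{\otimes n}$ of Section~3 (equivalently Theorem~\ref{decomposition-sl(n)}, which shows that only those $z$ whose $\mathfrak{sl}(n)$-projection of $\gamma_z$ vanishes contribute to the $\mathfrak h_-$-invariants). This would identify $K(\mathfrak g,1)\cong K(\mathfrak{sl}(n),-1)\cong(\mathcal A(1)^0)^{\otimes n}$; the matching $c=-2n$ on all three algebras is again a useful check.

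\textbf{Main obstacle.} I expect the real work to lie in the reverse inclusion of part~(1): for these irrational, non-$C_2$-cofinite objects one cannot simply assert that the commutant is generated by the obvious invariant fields. The cleanest rigorous control should come from matching graded (super)characters --- using the explicit product and false-theta formulas established earlier together with the fusion rules --- so that $\mathrm{Com}_{V_1(\mathfrak g)}(L_{\mathfrak{sl}_n}(\Lambda_0))$ is pinned to be neither larger nor smaller than $\bigoplus_s V_{ns}$. The same care is needed in part~(2), where one must keep precise track of the abelian charge grading the simple-current sum in order to be certain that coseting by $\mathfrak h$ returns exactly $(\mathcal A(1)^0)^{\otimes n}$ rather than a proper simple-current extension of it.
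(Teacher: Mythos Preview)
Your approach is quite different from the paper's, and considerably more laborious. The paper's proof of (1) is essentially one line: it invokes (or alternatively re-derives from the fusion rules of Theorem~\ref{extended-algebra} together with the simple-current property of level-one $\widehat{\mathfrak{sl}(n)}$-modules) the conformal-embedding decomposition
\[
V_1(\mathfrak g)=\bigoplus_{i=0}^{n-1}\mathcal U_i\otimes L(\Lambda_i),
\]
taken from \cite{AKMPP-new}. Since this decomposition is multiplicity-free and the $L(\Lambda_i)$ are pairwise inequivalent, the Howe-pair statement and the commutant identification follow immediately. No character matching and no explicit free-field argument is needed; the ``reverse inclusion'' you flag as the main obstacle simply does not arise. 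For (2) the paper again reads off the answer from the same decomposition, using only $K(\mathfrak{sl}(n),1)\cong\mathbb C$ and $\mathrm{Com}_{\mathcal U^{(n)}}(M_{n-1}(1))=K(\mathfrak{sl}(n),-1)$; your argument that only $L(\Lambda_0)$ carries zero $\mathfrak h_+$-weight vectors is exactly the content of $K(\mathfrak{sl}(n),1)\cong\mathbb C$, so on this part the two proofs coincide in substance.

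Your plan for (1) is plausible but carries real technical overhead that the paper avoids. First, the bilinears in $W_{(n)}\otimes F_{(n)}$ generate $V_1(\mathfrak{gl}(n|n))$ as the charge-zero subalgebra (this is how the paper itself uses the realization in the proof of Theorem~\ref{psl3}); passing to $V_1(\mathfrak{psl}(n|n))$ requires stripping off two Heisenberg directions, and your sketch does not address this. Second, the extension $\mathcal U^{(n)}$ is constructed inside $\mathcal A(n)\otimes V_L$, not inside $W_{(n)}$: the generators $v^{(\pm n)}$ involve $e^{\pm n\omega_1}$, $e^{\pm n\omega_{n-1}}$, which are not in the image of the bosonization $W_{(n)}\hookrightarrow\mathcal A(n)\otimes V_L$, so the claimed embedding of $\mathcal U^{(n)}$ into $W_{(n)}\otimes F_{(n)}$ (and hence into $V_1(\mathfrak g)$) is not direct and would itself need justification. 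Both issues disappear once one has the branching formula above, which is why the paper's route is cleaner.
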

 \begin{proof}
By using the decomposition of conformal embedding $\frak{sl}(n) \times \frak{sl}(n) \hookrightarrow \g$  (cf. \cite{AKMPP-new}) we get 
  \bea V_1( \g) = \bigoplus_{i=0} ^{n-1}  \mathcal U_i \otimes L(\Lambda_i),   \label{dec-ce} \eea
  where for brevity we omit the superscript $\frak{sl}_n$.
  Alternatively, relation (\ref{dec-ce}) can be directly proved by using the  fusion rules result   from Theorem \ref{extended-algebra} and the well-known  fact that all  $V_1 (\frak{sl}(n))$--modules are simple currents.
  
  The first assertion follows directly from (\ref{dec-ce}).  The second assertion follows again from (\ref{dec-ce}) and from
  $$ K(\frak{sl}(n), 1) \cong {\C}, \quad \mbox{Com}_{\mathcal U^{(n)}} (M_{n-1}(1))   = K(\frak{sl}(n), -1). $$
 \end{proof}
 
  The case $n=1$ corresponds exactly to the  symplectic fermion vertex algebra $\mathcal A(1)$ of central charge $c=-2$. 
  We will see that for  $n \ge 2$, the supertrace  $\mbox{sch} [V_1 (\g)](\tau)$ are the same, and therefore they satisfy the same MLDE
  \begin{equation} \label{aa}
  \theta^2( y(\tau))+\frac{1}{144} E_4(\tau) y(\tau)=0.
  \end{equation}
%{  \color{blue} Zar ne mi trebali navesti MLDE za simplekticke fermione koja je reda 2? Jedno rjesenje je $\eta (\tau) ^2$ a drugo logaritamsko ? Druga mogucost je %da  u clanku i izostavimo  pricu o MLDE. AM: oh da...zaboravio sam drugu potenciju. }
% Notice that the central charge is always $c=-2$.
 \begin{theorem} \label{psl3} We have:
$$\mbox{\rm sch} [V_1 (\g)] (\tau)  =  \eta(\tau) ^2.$$
 \end{theorem}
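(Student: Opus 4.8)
The plan is to reduce the statement to the two numerical invariants that control a vacuum supertrace---the central charge and the superdimension---to show that both are independent of $n$ and coincide with those of the symplectic fermions $\mathcal A(1)$, and then to promote this coincidence to an identity of $q$-series.

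First I would compute the invariants for $\g=\frak{psl}(n,n)$. Its even part is $\frak{sl}(n)\oplus\frak{sl}(n)$, of dimension $2(n^2-1)$, and its odd part has dimension $2n^2$, so $\mathrm{sdim}(\g)=2(n^2-1)-2n^2=-2$ for every $n$; since $h^\vee=0$ the Sugawara central charge at level one is $c=-2$. These match $\mathcal A(1)$. The cleanest way to see the supertrace is then a free-field cancellation. Using the symplectic bosons $W_{(n)}$ and free fermions $F_{(n)}$ of Section \ref{realization-sln}, the currents of $\frak{gl}(n,n)_1$ arise as normal-ordered bilinears; at each half-integer level these carry exactly $2n$ bosonic and $2n$ fermionic modes, so the graded supertrace collapses termwise. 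Equivalently, on the vacuum module every fermionic oscillator contributes $(1-q^m)$ and every bosonic one $(1-q^m)^{-1}$ to the supertrace, leaving $q^{-c/24}\prod_{m\geq 1}(1-q^m)^{-\mathrm{sdim}(\g)}=q^{1/12}\prod_{m\geq 1}(1-q^m)^2=\eta(\tau)^2$, which is precisely $\mathrm{sch}[\mathcal A(1)]$.

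The main obstacle is that this computation naturally yields the supertrace of the \emph{universal} object, whereas the theorem concerns the \emph{simple} algebra $V_1(\g)$; one must show the maximal ideal $J$ contributes zero to the supertrace. I would resolve this inside the free-field model: realize $\frak{psl}(n,n)_1$ as the appropriate coset of $W_{(n)}\otimes F_{(n)}$, after removing the rank-two Heisenberg carrying the two $\frak{gl}(1)$ directions, so that the simple algebra is built directly from the free fields and the singular vectors of the universal algebra never enter. Refining the supertrace by a charge fugacity $\zeta$ turns the identity $\mathrm{sch}[W_{(n)}\otimes F_{(n)}]=1$ into a ratio of theta functions, and extracting the charge-zero part by a constant term $\mathrm{CT}_\zeta$---exactly the manipulation carried out in the quasimodularity section---reproduces $\eta(\tau)^2$ for all $n$, the $n$-independence being forced by the exact oscillator cancellation.

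Finally, for $n=3$ I would give an independent check from the conformal-embedding decomposition (\ref{dec-ce}), namely $V_1(\g)=\bigoplus_{i=0}^{2}\mathcal U_i\otimes L(\Lambda_i)$. Taking supertraces gives $\mathrm{sch}[V_1(\g)](\tau)=\sum_{i=0}^{2}\mathrm{sch}[\mathcal U_i](\tau)\,\mathrm{ch}[L(\Lambda_i)](\tau)$; I would insert the explicit supercharacters $\mathrm{sch}[\mathcal U_i]$ from Theorem \ref{sl3-char-schar} (using $\mathrm{sch}[\mathcal U_1]=\mathrm{sch}[\mathcal U_2]$) together with the level-one characters $\mathrm{ch}[L(\Lambda_i)]=\Theta_{\Lambda_i}(\tau)/\eta(\tau)^{2}$. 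The ratio $\eta^{\,n+1}/\eta^{\,n-1}=\eta^{2}$ supplies the expected prefactor, while the sum over the three cosets reassembles the weight lattice so that the accompanying constant term collapses to $1$; the residual theta identity is the only computational point and is routine. This confirms $\mathrm{sch}[V_1(\frak{psl}(3,3))](\tau)=\eta(\tau)^2$ and, together with the general argument, establishes the theorem for all $n\geq 2$.
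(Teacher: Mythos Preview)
Your approach is essentially the paper's: both proofs realize $V_1(\frak{gl}(n|n))$ as the charge-zero subspace of $W_{(n)}\otimes F_{(n)}$, use the exact boson--fermion cancellation to obtain $\mathrm{sch}[V_1(\frak{gl}(n|n))]=\mathrm{CT}_\zeta(1)=1$, and then pass to $\frak{psl}(n|n)$ by stripping the rank-two Heisenberg (the two-dimensional abelian ideal), which supplies the factor $\eta(\tau)^2$; your $n=3$ cross-check via the decomposition (\ref{dec-ce}) is exactly the paper's second proof. One small imprecision: the constant-term extraction itself yields $1$, not $\eta(\tau)^2$---the $\eta(\tau)^2$ appears only after the Heisenberg removal, so keep those two steps separate; also your opening computation $q^{-c/24}\prod_m(1-q^m)^{-\mathrm{sdim}\,\g}$ is the supertrace of the \emph{universal} module and serves only as motivation, whereas the actual argument (as you correctly note) must go through the free-field realization of the simple quotient.
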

 
 \subsection {Proof of Theorem \ref{psl3} }
The proof of Theorem \ref{psl3} uses explicit realization of $V_1 (\frak{gl}(n \vert n))$--modules and relation between supercharacters $V_k(\frak{psl}(n\vert n))$ and $V_k (\frak{gl}(n \vert n))$.
%{\color{blue} D: Molim te provjeri da li je ovo dokaz.}

\begin{lemma} For every $k$ we have:
%$$ {\rm sch} [V_k (\frak{psl}(n, n))](\tau) =  \frac{ {\rm sch} [V_k (\frak{sl}(n, n))](\tau)] }{\eta(\tau)} =  \frac{ {\rm sch} [V_k (\frak{gl}(n, n))](\tau)] }{\eta(\tau) ^2}. $$

%\color{blue} tu je typo. Treba biti

$$ {\rm sch} [V_k (\frak{gl}(n, n))](\tau) =  \frac{ {\rm sch} [V_k (\frak{sl}(n, n))](\tau)] }{\eta(\tau)} =  \frac{ {\rm sch} [V_k (\frak{psl}(n, n))](\tau)] }{\eta(\tau) ^2}. $$

\color{black}

\end{lemma}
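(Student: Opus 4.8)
The plan is to exploit that $\frak{psl}(n,n)$, $\frak{sl}(n,n)$ and $\frak{gl}(n,n)$ have identical root spaces and differ only in their Cartan part, and to show that the chain $\frak{psl}(n,n)\subset\frak{sl}(n,n)\subset\frak{gl}(n,n)$ adjoins at each step exactly one even (bosonic) current, each contributing a factor $\eta(\tau)^{-1}$ to the supercharacter. First I would record the structural data: $\frak{sl}(n,n)=[\frak{gl}(n,n),\frak{gl}(n,n)]$ is the supertraceless subalgebra; the identity $I$ has ${\rm str}(I)=n-n=0$, so $I\in\frak{sl}(n,n)$ and $\frak{psl}(n,n)=\frak{sl}(n,n)/\C I$; and $\frak{gl}(n,n)=\frak{sl}(n,n)\oplus\C N$ for any $N$ with ${\rm str}(N)\ne0$.

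Next I would isolate the two distinguished Cartan currents $I(z)$ and $N(z)$, whose only nonzero pairing is $\langle I,N\rangle={\rm str}(N)\ne0$, while $\langle I,I\rangle={\rm str}(I)=0$; thus $I$ is a central null current. Passing from $\frak{psl}$ to $\frak{sl}$ adjoins the oscillators $\{I(-m):m\ge1\}$, and from $\frak{sl}$ to $\frak{gl}$ the further oscillators $\{N(-m):m\ge1\}$. Each generator being even, its modes contribute, in the supertrace, a bosonic Fock space of graded dimension $\prod_{m\ge1}(1-q^m)^{-1}$ counted with positive sign.

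For the bookkeeping I would use that $h^\vee=0$ for all three superalgebras, so the Sugawara central charge $c=k\,{\rm sdim}\,\frak g/(k+h^\vee)$ equals ${\rm sdim}\,\frak g$ independently of $k$: one has $c(\frak{psl})=-2$, $c(\frak{sl})=-1$, $c(\frak{gl})=0$, rising by the central charge $1$ of a single free boson at each step. Combining the $q^{-c/24}$ normalization with the oscillator factor yields $q^{-1/24}\prod_{m\ge1}(1-q^m)^{-1}=\eta(\tau)^{-1}$ per step, giving ${\rm sch}[V_k(\frak{sl}(n,n))]={\rm sch}[V_k(\frak{psl}(n,n))]/\eta(\tau)$ and ${\rm sch}[V_k(\frak{gl}(n,n))]={\rm sch}[V_k(\frak{sl}(n,n))]/\eta(\tau)={\rm sch}[V_k(\frak{psl}(n,n))]/\eta(\tau)^2$, which is the assertion.

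The main obstacle is the nullness of $I$: since $\langle I,I\rangle=0$, the vector $I(-1)\vak$ is singular, so in the literal simple quotient of $V^k(\frak{sl}(n,n))$ it would vanish and $\frak{sl}$ would collapse onto $\frak{psl}$. I would therefore realize all three algebras inside $V_k(\frak{gl}(n,n))$ — where $\langle I,N\rangle\ne0$ keeps $I$ alive — and prove the splitting only at the level of graded super-dimensions. It cannot be upgraded to a tensor factorization of vertex algebras: every current commuting with $\frak{sl}(n,n)$ lies in $\C I$ and is null, while $N$ fails to commute with the odd part of $\frak{psl}(n,n)$, so $V_k(\frak{psl}(n,n))$ is not a Heisenberg commutant. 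Establishing that the two bosonic towers nonetheless act freely on the simple vertex superalgebra at every level — so that the Fock factor genuinely splits off as a graded vector space — is the crux, and I expect to extract it from the explicit realization of $V_k(\frak{gl}(n,n))$ together with the $k$-independent centrality of $I$.
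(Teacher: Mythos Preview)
Your approach is essentially the paper's, but vastly more careful: the paper's entire proof is the single sentence ``Follows from the definition of $\frak{psl}(n,n)=\frak{gl}(n,n)/I$, where $I$ is a two-dimensional abelian ideal.'' Both arguments rest on the same structural fact that the three superalgebras share root spaces and differ only by one or two even Cartan generators, each contributing an $\eta(\tau)^{-1}$.

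You have, however, put your finger on a genuine issue the paper ignores. Your observation that $I$ is null in $\frak{sl}(n\vert n)$ is correct, and it means that in the literal simple quotient $V_k(\frak{sl}(n\vert n))$ the vector $I(-1)\mathbf{1}$ is singular and hence zero, forcing $V_k(\frak{sl}(n\vert n))\cong V_k(\frak{psl}(n\vert n))$ and falsifying the middle equality as written. Your proposed fix---interpret the intermediate object as the $\frak{sl}(n\vert n)$-subalgebra of $V_k(\frak{gl}(n\vert n))$, where the pairing $\langle I,N\rangle\ne 0$ keeps the $I$-tower alive---is the right way to make sense of the statement. One caveat: the explicit realization you plan to invoke is only available in this paper at $k=1$, so the phrase ``for every $k$'' is not fully supported by either your argument or the paper's. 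Fortunately, only the $\frak{gl}\leftrightarrow\frak{psl}$ relation at $k=1$ is actually used downstream (in proving $\mathrm{sch}[V_1(\frak{psl}(n\vert n))]=\eta(\tau)^2$), and that case your argument handles cleanly.
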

\begin{proof}
Follows from  the definition of $\frak{psl}(n,n)=\frak{gl}(n,n) / I$, where $I$ is a two-dimensional abelian ideal.
\end{proof}
\begin{lemma}  For $k=1$ we have
$$ {\rm sch} [V_k (\frak{gl}(n, n))](\tau) =   1.  $$ 
\end{lemma}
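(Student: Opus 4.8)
The plan is to deduce the lemma from a free-field realization of $V_1(\frak{gl}(n,n))$, reducing its supercharacter to the constant term (in a charge fugacity) of a refined supertrace in which the bosonic and fermionic oscillators cancel exactly.

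First I would set up the realization. Let $F_{(n)}$ be the Clifford vertex algebra generated by the odd weight-$\tfrac12$ fields $\Psi_i^{\pm}$, and let $W_{(n)}$ be the Weyl vertex algebra generated by the even weight-$\tfrac12$ fields $a_i^{\pm}$; their central charges are $n$ and $-n$, so $F_{(n)}\otimes W_{(n)}$ has $c=0$, matching $V_1(\frak{gl}(n,n))$. The level-one currents of $\widehat{\frak{gl}(n,n)}$ are realized by the weight-one bilinears $:\Psi_i^+\Psi_j^-:$, $:a_i^+a_j^-:$ (even) and $:\Psi_i^+a_j^-:$, $:a_i^+\Psi_j^-:$ (odd). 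Introducing the charge $J$ that assigns $+1$ to each of $\Psi_i^+,a_i^+$ and $-1$ to each of $\Psi_i^-,a_i^-$, all currents have $J$-degree $0$, so the image of $V_1(\frak{gl}(n,n))$ lies in the charge-zero subalgebra $(F_{(n)}\otimes W_{(n)})^{(0)}$. The structural input I would invoke is the explicit realization identifying $V_1(\frak{gl}(n,n))$ with this entire charge-zero subalgebra, exactly parallel to the identification $W_{(n)}^{(0)}\cong V_{-1}(\frak{gl}(n))$ recalled in Section 2.

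Next I would compute the $J$-refined supertrace over the full Fock space $F_{(n)}\otimes W_{(n)}$. Since the modes live at $r\in\tfrac12+\N$, each complex fermion contributes $\prod_{r\in\tfrac12+\N}(1-\zeta q^r)(1-\zeta^{-1}q^r)$ to ${\rm str}\,\zeta^{J}q^{L(0)}$ — the signs coming from $\sigma=(-1)^{F}$ turn the usual $(1+\cdots)$ factors into $(1-\cdots)$ — while each symplectic boson, being even, contributes the reciprocal $\prod_{r\in\tfrac12+\N}(1-\zeta q^r)^{-1}(1-\zeta^{-1}q^r)^{-1}$ with no sign. With $n$ fermions and $n$ bosons the two products cancel termwise, giving
$$ {\rm str}_{F_{(n)}\otimes W_{(n)}}\,\zeta^{J}q^{L(0)}=\prod_{r\in\tfrac12+\N}\frac{(1-\zeta q^r)^{n}(1-\zeta^{-1}q^r)^{n}}{(1-\zeta q^r)^{n}(1-\zeta^{-1}q^r)^{n}}=1. $$

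Finally, because $c=0$ the supercharacter of the charge-zero subalgebra is exactly the coefficient of $\zeta^{0}$ in this supertrace, so ${\rm sch}[V_1(\frak{gl}(n,n))]={\rm CT}_{\zeta}(1)=1$; in fact every nonzero charge sector has vanishing supercharacter. I expect the one genuine obstacle to be the structural step, namely verifying that the charge-zero subalgebra is precisely $V_1(\frak{gl}(n,n))$ and not a proper extension of it. Once that identification is in place, the $\zeta$-refined cancellation and the constant-term extraction are immediate.
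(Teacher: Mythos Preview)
Your proposal is correct and follows essentially the same argument as the paper: realize $V_1(\frak{gl}(n,n))$ as the charge-zero component of $W_{(n)}\otimes F_{(n)}$, compute the $\zeta$-refined supertrace on the full Fock space where the $n$ fermionic and $n$ bosonic factors cancel to $1$, and extract the constant term. The structural identification you flag as the only real obstacle is exactly what the paper takes as a known fact (``Recall that $V_1(\frak{gl}(n,n))$ is realized as a charge-zero component\ldots'').
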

\begin{proof}
Recall that $V_1(\frak{gl}(n,n))$ is realized as a charge-zero component of  the vertex algebra $W_{(n)} \otimes F_{(n)}$, where $W^{(n)}$ is the Weyl vertex algebra, $F^{(n)}$ is the Clifford vertex algebra. and charge operator is $J(0)$ where
$$ J= \sum_{i=1} ^n  \left( : a^+ _i a^- _i: + :\Psi_i ^+ \Psi^- _i : \right). $$
Since
\bea   {\rm sch} [ W^{(n)} \otimes F^{(n)} ](\tau) &=&   {\rm sch}  [ F^{(n)} ](\tau)   \cdot   {\rm ch}  [ W^{(n)} ](\tau) \nonumber \\ & =& \frac{ \left( \prod_{m=1} ^{\infty} ( 1 - q^{m-1/2} z) (   ( 1 - q^{m-1/2} z^{-1} )  \right) ^{n}} {  \left( \prod_{m=1} ^{\infty} ( 1 - q^{m-1/2} z) (   ( 1 - q^{m-1/2} z^{-1} )  \right) ^{n} } \nonumber  \\ &=& 1 \nonumber \eea
we conclude that  ${\rm sch} [V_1 (gl(n, n))](\tau) =   1$. 
(${\rm sch} [V_1 (\frak{gl}(n, n))](\tau) $ is exactly the  constant term of the expression above.  The coefficient  of $z^j$, $j \ne 0$,  gives the supercharacters of  an  irreducible  $V_1 (\frak{gl}(n, n))$--module, and it is zero.)
\end{proof}

 Now Theorem \ref{psl3} follows from previous two lemmas.
 
 \begin{remark}
 Theorem \ref{psl3} is also in agreement with the recent results on the Duflo-Serganova functor \cite{GS}.
 \end{remark}
% This, in particular, gives
% $$  {\rm sch}[V_1( \g)] [ \tau]   =  \sum_{i=0} ^{2} {\rm sch} [L_i] [\tau] {\rm ch} [L(\Lambda_i)](\tau). $$
% As noticed earlier, 
 %Since both left and right hand side are (quasi)modular in theory it would be sufficient to compute a few first coefficients and compare.
 %However the weight of quasi-modular forms appearing in the decomposition of ${\rm sch}[L_i]$ increases with the rank $n$ so this method cannot be 
 %used for every $n$. We present a different (more conceptual) approach for $n=3$. 
 
 \subsection{Second proof of Theorem \ref{psl3} for $n=3$.}
 In the case $n=3$, we have a different proof which uses  the branching rules for conformal embeddings.
 
 We have 
  $$ V_1( \g) =\mathcal U_0 \otimes L(\Lambda_0) \bigoplus \mathcal U_1 \otimes L(\Lambda_1) \bigoplus \mathcal U_2 \otimes L(\Lambda_2).   $$
  This gives
 $$  {\rm sch}[V_1( \g)]( \tau)   =  \sum_{i=0} ^{n-1} {\rm sch} [\mathcal U_i] (\tau) {\rm ch} [L(\Lambda_i)](\tau). $$
 %{ \color{blue} E sad treba vidjeti da li  se sad stvarno dobije $\eta(\tau) ^2$. }
 Since both left and right hand side are (quasi)modular in theory it would be sufficient to compute a few first coefficients in the $q$-expansion.
Here we present a more conceptual proof.
% ${\rm sch}[L_i]$ is a quasi-modular form for $\Gamma_0(n)$ and not for $\Gamma(1)$.  
We need an auxiliary result 
\begin{lemma}
 \begin{align*}
 {\rm ch}[\Lambda_0](\tau)&=\frac{\sum_{m,n \in \mathbb{Z}} q^{m^2+n^2-mn}}{\eta(\tau)^2}=\frac{1}{\eta(\tau)^3} \left(3 \eta(3 \tau)^3+\eta(\tau/3)^3 \right) \\
 {\rm ch} [\Lambda_i](\tau)&=\frac{\sum_{m,n \in \mathbb{Z}} q^{m^2+n^2+n+\frac{1}{3}-mn}}{\eta(\tau)^2}=\frac{3 \eta(3 \tau)^3}{\eta(\tau)^3},
 \end{align*}
\end{lemma}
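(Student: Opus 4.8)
The plan is to recognise the two characters as normalised theta functions of the $A_2$ root lattice, and then to reduce the lemma to two classical cubic theta--eta identities which I would settle by a finite modular--forms comparison.

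First I would use the Frenkel--Kac realisation $V_1(\frak{sl}(3))\cong V_Q$, with $Q$ the $A_2$ root lattice, together with the standard level-one string-function/theta formula of Kac--Peterson: the three inequivalent simple modules are the lattice cosets indexed by $\lambda\in P/Q\cong\Z/3$, and since $Q$ has rank two and $c=2$ their graded dimensions are $\Theta_{Q+\lambda}(\tau)/\eta(\tau)^2$, where $\Theta_{Q+\lambda}(\tau)=\sum_{v\in Q+\lambda}q^{\langle v,v\rangle/2}$. For $\lambda=0$, writing the norm form in a basis gives
$$\mathrm{ch}[\Lambda_0](\tau)=\frac{\sum_{m,n\in\Z}q^{m^2-mn+n^2}}{\eta(\tau)^2},$$
and for $\lambda$ a generator of $P/Q$ the representative that shifts the form to $m^2-mn+n^2+n+\tfrac13$ gives the stated expression for $\mathrm{ch}[\Lambda_i]$ (the two nontrivial cosets give the same theta series, matching the single formula for $i=1,2$). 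I would confirm the normalisation by matching lowest conformal weights, $h_{\Lambda_0}-c/24=-\tfrac1{12}$ and $h_{\Lambda_i}-c/24=\tfrac13-\tfrac1{12}=\tfrac14$, against the leading exponents of the two theta quotients. This already yields the first equality on each line.

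It then remains to prove the two identities
$$\sum_{m,n\in\Z}q^{m^2-mn+n^2}=\frac{\eta(\tau/3)^3+3\,\eta(3\tau)^3}{\eta(\tau)},\qquad \sum_{m,n\in\Z}q^{m^2-mn+n^2+n+\frac13}=\frac{3\,\eta(3\tau)^3}{\eta(\tau)}.$$
After the substitution $n\mapsto-n$ the left-hand sides are Borwein's cubic theta functions $a(\tau)$ and $c(\tau)$; as theta series of a rank-two lattice coset they are weight-one modular forms. By the preceding Lemma \ref{lemma-eta3} the numerators $\eta(\tau/3)^3$ and $\eta(3\tau)^3$ are components of a weight-$\tfrac32$ vector-valued form, so after dividing by $\eta(\tau)$ (weight $\tfrac12$) the right-hand sides are weight-one forms as well. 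Both sides therefore lie in a single finite-dimensional space of weight-one forms on a common congruence group, and equality reduces to comparing finitely many Fourier coefficients up to the Sturm bound. The leading expansions already exhibit the mechanism: $\eta(\tau/3)^3/\eta(\tau)=1-3q^{1/3}+\cdots$ and $3\eta(3\tau)^3/\eta(\tau)=3q^{1/3}+\cdots$, so the fractional powers cancel and the first right-hand side begins $1+O(q)$, matching $a(\tau)=1+6q+\cdots$; in the second identity the right-hand side begins $3q^{1/3}+\cdots$, matching the three minimal-norm points of the nontrivial coset.

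The step I expect to be the genuine obstacle is the bookkeeping forced by the $\tau/3$ scaling. The summand $\eta(\tau/3)^3/\eta(\tau)$ individually transforms only on a level-nine group, whereas the sum $a(\tau)$ is modular on $\Gamma_0(3)$; one must verify that all fractional $q^{1/3}$-contributions cancel against those of $3\eta(3\tau)^3/\eta(\tau)$ so that the combination descends to the smaller group, and one must pin down the correct multiplier before invoking a Sturm-type bound. Handling this cleanly means tracking the full vector-valued transformation supplied by Lemma \ref{lemma-eta3} and identifying the induced character; once the group and character are fixed, the analytic content is classical (these are exactly the Borwein cubic theta relations) and the coefficient comparison is a routine finite check, which is presumably why the analogous statement is dispatched there as a straightforward computation.
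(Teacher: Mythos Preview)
Your approach is correct and would succeed, but it differs from the paper's in a way worth noting. You propose to establish both eta-identities independently: recognise the $A_2$ theta series as Borwein's cubic functions $a(\tau)$ and $c(\tau)$, place each side in a common finite-dimensional space of weight-one forms, and finish by a Sturm-bound coefficient check. This is a legitimate route, and your worry about the $\tau/3$ scaling and the level-nine bookkeeping is exactly the technical price you would pay.

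The paper sidesteps that bookkeeping entirely. It proves only the \emph{second} identity directly, citing the Macdonald denominator identity for $A_2$ (so $\mathrm{ch}[\Lambda_i]=3\eta(3\tau)^3/\eta(\tau)^3$ is taken as known). For the first identity it argues abstractly: by Lemma~\ref{lemma-eta3} the span $\mathcal V=\langle 3\eta(3\tau)^3/\eta(\tau)^3,\ \eta(\tau/3)^3/\eta(\tau)^3\rangle$ is a two-dimensional modular-invariant space, and the span $W=\langle \mathrm{ch}[\Lambda_0],\mathrm{ch}[\Lambda_1]\rangle$ is likewise two-dimensional and modular-invariant (as characters of a rational VOA). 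Since $\mathrm{ch}[\Lambda_1]\in\mathcal V\cap W$ and this element is not fixed by the full modular action, the invariant subspace it generates is all of $W$; hence $W\subset\mathcal V$, forcing $W=\mathcal V$. Thus $\mathrm{ch}[\Lambda_0]$ lies in $\mathcal V$, and matching the first two $q$-coefficients pins down the combination $3\eta(3\tau)^3+\eta(\tau/3)^3$.

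What each approach buys: yours is self-contained and makes contact with the classical Borwein relations, at the cost of tracking multipliers on a larger congruence group and verifying cancellation of the $q^{1/3}$ terms. The paper's argument is shorter and more structural---one known denominator identity plus a dimension count---and never has to confront the fractional-power cancellation you flagged as the main obstacle.
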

\begin{proof}
The second identity is essentially the Macdonald denominator identity for $A_2$. By Lemma \ref{lemma-eta3} we have 
$$\mathcal V:={\rm Span} \left\{ \frac{3 \eta(3 \tau)^3}{\eta(\tau)^3}, \frac{3 \eta( \tau/3)^3}{\eta(\tau)^3} \right\}$$
is modular invariant. On the other hand
$$W:={\rm Span} \{ {\rm ch}[\Lambda_0](\tau), {\rm ch} [\Lambda_1](\tau) \}$$ 
is also two-dimensional modular invariant subspace. Since $ {\rm ch}[\Lambda_1](\tau) \in \mathcal V$ we must have $ {\rm ch}[\Lambda_0](\tau) \in \mathcal V$.
This quickly gives the formula by comparing the leading coefficients in the $q$-expansion. 
\end{proof}

 \begin{proposition}
 The Thereom \ref{psl3} holds for $n=3$.  
 \end{proposition}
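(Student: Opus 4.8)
The plan is to take the supertrace of the conformal-embedding decomposition (\ref{dec-ce}), specialized to $n=3$, and to evaluate the resulting finite sum using the explicit (quasi-)modular data already assembled in the paper. Applying ${\rm sch}$ to
$$V_1(\g) = \mathcal U_0 \otimes L(\Lambda_0) \oplus \mathcal U_1 \otimes L(\Lambda_1) \oplus \mathcal U_2 \otimes L(\Lambda_2)$$
yields ${\rm sch}[V_1(\g)] = \sum_{i=0}^{2} {\rm sch}[\mathcal U_i]\,{\rm ch}[L(\Lambda_i)]$. The first move is to use the two symmetries already recorded: ${\rm sch}[\mathcal U_1] = {\rm sch}[\mathcal U_2]$ (from the Remark following Theorem \ref{sl3-char-schar}) and ${\rm ch}[L(\Lambda_1)] = {\rm ch}[L(\Lambda_2)]$ (from the Lemma preceding this Proposition), which collapse the sum to
$${\rm sch}[V_1(\g)] = {\rm sch}[\mathcal U_0]\,{\rm ch}[L(\Lambda_0)] + 2\,{\rm sch}[\mathcal U_1]\,{\rm ch}[L(\Lambda_1)].$$

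Next I would substitute the closed forms for each factor. For the affine characters the Lemma supplies ${\rm ch}[L(\Lambda_0)] = \bigl(3\eta(3\tau)^3 + \eta(\tau/3)^3\bigr)/\eta(\tau)^3$ and ${\rm ch}[L(\Lambda_1)] = 3\eta(3\tau)^3/\eta(\tau)^3$, both eta-quotients of weight zero. For ${\rm sch}[\mathcal U_0] = {\rm sch}[\mathcal U]$ I would use the closed form already obtained in the Explicit Example ($n=3$) subsection and restated in Remark \ref{super-proof}, namely (up to the normalization $\epsilon_{3,0}$) a scalar multiple of $(E_2(\tau) - 9E_2(3\tau))\,\eta(3\tau)^3/\eta(\tau)^5$. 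For ${\rm sch}[\mathcal U_1] = {\rm sch}[\mathcal U_2]$ I would run the identical Laurent-expansion / meromorphic-Jacobi-form argument on the module supercharacter recorded in the Remark after Theorem \ref{sl3-char-schar}, producing another eta-quotient times a linear combination of $E_2(\tau)$ and $E_2(3\tau)$ (plus a genuinely modular remainder coming from the $H_0$ term of the Laurent expansion). Each ${\rm sch}[\mathcal U_i]$ is then quasi-modular of weight one and depth one, so each summand ${\rm sch}[\mathcal U_i]\,{\rm ch}[L(\Lambda_i)]$ has weight one and depth one.

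The crux, and the step I expect to be the main obstacle, is the cancellation of the quasi-modular part. The target $\eta(\tau)^2$ is a genuine modular form of weight one and depth zero, whereas each summand carries a depth-one $E_2$-anomaly; hence all the $E_2(\tau)$ and $E_2(3\tau)$ contributions must conspire to cancel in the weighted combination, a nontrivial identity linking the normalization constants $\epsilon_{3,k}$ to the coefficients $1$ and $2$ of ${\rm ch}[L(\Lambda_0)]$ and ${\rm ch}[L(\Lambda_1)]$. Verifying this requires simplifying the products $\eta(3\tau)^6/\eta(\tau)^3$ and $\eta(\tau/3)^3\eta(3\tau)^3/\eta(\tau)^3$ against the eta-quotients multiplying the Eisenstein series, using precisely the three-term eta relations underlying Lemma \ref{lemma-eta3}. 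Once the $E_2$-terms drop out, what remains is a holomorphic eta-quotient of weight one on $\Gamma_0(3)$ carrying the $\eta^2$-multiplier; comparing its leading $q$-exponent and leading coefficient with those of $\eta(\tau)^2$ then completes the proof. (Alternatively, once depth-zero modularity is established, one may finish by matching finitely many Fourier coefficients, since the relevant space of weight-one forms with this multiplier is one-dimensional.)
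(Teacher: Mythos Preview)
Your proposal is correct and follows essentially the same route as the paper: take the supertrace of the decomposition (\ref{dec-ce}) for $n=3$, insert the explicit eta-quotient formulas for ${\rm ch}[L(\Lambda_i)]$ and the quasi-modular closed forms for ${\rm sch}[\mathcal U_i]$ (the paper also records the explicit expression ${\rm sch}[\mathcal U_1]={\rm sch}[\mathcal U_2]=\tfrac{1}{6}\eta(\tau)^5/\eta(3\tau)^3 + (E_2(\tau)-9E_2(3\tau))(\eta(\tau/3)^3+3\eta(3\tau)^3)/(48\eta(\tau)^5)$ obtained by the same Laurent-expansion procedure you describe), and then simplify. The paper finishes by a direct plug-and-simplify rather than your structural ``cancel depth, then match coefficients in a one-dimensional space'' argument, but the two endgames are equivalent in practice.
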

 \begin{proof}
%First we recall Lemma. This defines a two-dimensional space invariant under $SL(2, \mathbb{Z})$. 
The above lemma gives
 \begin{align*}
 {\rm ch}[\Lambda_0](\tau)&=\frac{1}{\eta(\tau)^3} \left(3 \eta(3 \tau)^3+\eta(\tau/3)^3 \right) \\
 {\rm ch} [\Lambda_i](\tau)&=\frac{3 \eta(3 \tau)^3}{\eta(\tau)^3}.
 \end{align*}
% where we used properties of the $A_2$ lattice theta series (same formulas can be derived from the numerator formula).
 
 As in the previous section, for $1 \leq i \leq 2$ we get 
\begin{align*}
{\rm sch}[\mathcal U_i](\tau)&=- \sum_{n \geq 0; n \equiv \pm 1 \mod 6} {\rm ch}[V_n]+\sum_{n \geq 0; n \equiv \pm 2 \mod 6} {\rm ch}[V_n] \\
&= \frac{1}{6} \frac{\eta(\tau)^5}{\eta(3 \tau)^3}+\frac{(E_2(\tau)-9 E_2(3 \tau))(\eta(\tau/3)^3+3 \eta(3\tau)^3)}{48 \eta(\tau)^5}.
\end{align*}
We previously derived the formula
$${\rm sch}[\mathcal U_0](\tau)=-\frac{1}{8} \left( E_2(\tau) - 9 E_2(3 \tau) \right) \frac{\eta(3 \tau)^3}{\eta(\tau)^5}.$$
Plugging-in these $q$-series gives 
 $$  \sum_{i=0} ^{2} {\rm sch} [\mathcal U_i] (\tau) {\rm ch} [L(\Lambda_i)](\tau) = \eta(\tau)^2$$
 as desired.
 \end{proof}
% {\bf cuj, ne znam stvarno kako bi se dokazalo..problem je sto tezina na desnoj strani raste sa rangom tako da me mozes korisiti argument 
% modularnih form.}
 
% \begin{remark} It seems difficult to prove Theorem \ref{psl3} for general $n$ using the above method because 
 %the weight of the numerator of ${\rm ch}[\mathcal U_i]$ grows quadratically with respect to $n$. \end{remark}

 \subsection{MLDE for the character of $\frak{psl}(n|n)_1$}
 
We have
 \begin{conjecture} For every $n \geq 2$,  the character of $\frak{psl}(n|n)_1$ satisfies the following second order MLDE (of weight zero):
 \begin{equation}\label{MMLDE-pslnn}
\left(q \frac{d}{dq} \right)^2 y(\tau)-\frac{1}{6} E_2(\tau)\left(q \frac{d}{dq}\right) y(\tau)+\left(-\frac{6n^2-5}{720} E_{4}(\tau)+\frac{n^2}{120}E_{4,2}(\tau)\right)
 y(\tau)=0,
 \end{equation}
 \end{conjecture}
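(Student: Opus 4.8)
The plan is to strip the $\eta$-prefactor off the character and reduce (\ref{MMLDE-pslnn}) to a single elementary second-order ODE for a constant term, which I would then attack by theta-function calculus. First I would make the character explicit: exactly as in the proof of Theorem~\ref{psl3}, realize $V_1(\frak{gl}(n|n))$ as the charge-zero part of $W_{(n)}\otimes F_{(n)}$; the full $q,z$-trace of $W_{(n)}\otimes F_{(n)}$ is $\prod_{m\ge 1}\frac{(1+zq^{m-1/2})^n(1+z^{-1}q^{m-1/2})^n}{(1-zq^{m-1/2})^n(1-z^{-1}q^{m-1/2})^n}$, which by the Jacobi triple product equals $\bigl(\theta_3(z;\tau)/\theta_4(z;\tau)\bigr)^n$. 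Passing to the charge-zero term in $z$ and using the character analogue of the Lemma preceding Theorem~\ref{psl3} (the defining ideal $I\subset\frak{gl}(n|n)$ is purely even, so it contributes the same rank-two Heisenberg factor $\eta^{-2}$ to the character as to the supercharacter), I obtain
$${\rm ch}[\frak{psl}(n|n)_1](\tau)=\eta(\tau)^2\,H(\tau),\qquad H(\tau):={\rm CT}_z\Bigl(\tfrac{\theta_3(z;\tau)}{\theta_4(z;\tau)}\Bigr)^n .$$
For $n=1$ this returns the symplectic fermion character of $\mathcal A(1)$, which fixes all normalizations.

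The reduction is then purely formal. Writing $y=\eta^2H$ and using $\theta_q\eta^2=\tfrac1{12}E_2\eta^2$ (with $\theta_q=q\,\tfrac{d}{dq}$) gives $\theta_q^{\,j}(\eta^2H)=\eta^2\mathcal D^{\,j}H$ for $\mathcal D:=\theta_q+\tfrac1{12}E_2$, and a one-line computation (using Ramanujan's $\theta_qE_2=\tfrac1{12}(E_2^2-E_4)$) shows $\mathcal D^2-\tfrac16E_2\mathcal D=\theta_q^2-\tfrac1{144}E_4$ as operators on functions. Hence the $\eta^2$ cancels and (\ref{MMLDE-pslnn}) is equivalent to
$$\theta_q^2H=\kappa_n\,\bigl(E_4(\tau)-E_{4,2}(\tau)\bigr)\,H,\qquad E_{4,2}(\tau)=E_4(2\tau),$$
with $\kappa_n$ a constant proportional to $n^2$ that the reduction fixes from the first $q$-coefficients of $H$. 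This already explains the shape of the conjecture: in the degenerate limit $E_{4,2}\to E_4$ one recovers exactly the supercharacter equation $\theta^2y+\tfrac1{144}E_4y=0$ of (\ref{aa}) (there $H\equiv{\rm CT}_z1=1$, since ${\rm sch}[\frak{gl}(n|n)_1]=1$), so the character MLDE is precisely the $\Gamma_0(2)$-deformation of the level-one supercharacter MLDE, the replacement of $E_4$ by the pair $E_4,E_{4,2}$ reflecting that $H$ is a form on $\Gamma_0(2)$ rather than on $SL_2(\Z)$.

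It remains to prove the reduced ODE, and this is the substance. I would write $H(\tau)=\int_0^1u(v,\tau)^n\,dv$ with $u=\theta_3/\theta_4$ and exploit the heat equation $\partial_v^2\theta_j=4\pi i\,\partial_\tau\theta_j$, which for the ratio yields the covariant relation $4\pi i\,\partial_\tau u=\partial_v^2u+2(\partial_v\log\theta_4)\,\partial_v u$. Differentiating $H$ in $\tau$ and integrating by parts in $v$ (all boundary terms vanish by $v\mapsto v+1$ periodicity) expresses $\theta_qH$, and then $\theta_q^2H$, through moments such as $\int_0^1 u^{\,n-2}(\partial_v u)^2\,dv$ and $\int_0^1(\partial_v^2\log\theta_4)\,u^n\,dv$. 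The decisive input is that $u=\theta_3/\theta_4$ is, after rescaling $v$, the Jacobi function $\mathrm{dn}$, so it satisfies the first-order relation $(\partial_v u)^2=A(\tau)\,(u^2-\alpha(\tau))(u^2-\beta(\tau))$ whose coefficients are built from the theta-constants $\theta_2,\theta_3,\theta_4$. Substituting this quartic collapses the derivative-moments back into the ordinary periods $\int u^{\,n},\int u^{\,n\pm2}$, and the resulting coupled system closes---just as a Picard--Fuchs equation for a period of the underlying elliptic family---into a single second-order equation for $H$; evaluating the theta-constant combinations as Eisenstein series on $\Gamma_0(2)$ then produces the coefficient $E_4-E_{4,2}$.

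The hard part will be establishing this closure \emph{uniformly in $n$}: a priori $\theta_q^2H$ couples to the neighbouring periods $\int u^{\,n\pm2}$, and one must show that the $\mathrm{dn}$-quartic together with the theta heat equations forces every such coupling to cancel, leaving only $\kappa_n(E_4-E_{4,2})H$. For each fixed $n$ the statement is unconditional: by the constant-term theory of \cite{BFM,DMZ} used in Section~7, both sides of the reduced ODE are quasimodular forms on $\Gamma_0(2)$ of bounded weight and depth, so the identity follows by matching finitely many $q$-coefficients via a Sturm-type bound. Turning these $n$-by-$n$ verifications into one argument valid for all $n\ge2$---equivalently, identifying the Picard--Fuchs operator of the family $\{H_n\}$ and showing it depends on $n$ only through the overall constant $\kappa_n$---is the genuine obstacle, and is what presently keeps the statement conjectural.
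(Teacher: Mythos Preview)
The statement is a conjecture; the paper does not prove it in general but only verifies $2\le n\le 4$ by inserting the explicit character formulas from \cite{BFM} and matching finitely many $q$-coefficients (a Sturm-type argument on $\Gamma_0(2)$). So you are not competing against a proof.

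Your reduction is correct and sharper than anything the paper does. The operator identity $\mathcal D^2-\tfrac16E_2\mathcal D=\theta_q^2-\tfrac1{144}E_4$ checks out, and with $y=\eta^2H$, $H={\rm CT}_z(\theta_3/\theta_4)^n$, the conjecture collapses exactly to
\[
\theta_q^2H=\tfrac{n^2}{120}\bigl(E_4-E_{4,2}\bigr)H=4n^2F_4\,H,
\]
with $F_4$ the weight-$4$ cusp form on $\Gamma_0(2)$ that the paper mentions just after the conjecture. One correction: the paper's $E_{4,2}(\tau)=1-240\sum_{m\ge1} m^3q^m/(1+q^m)$ is \emph{not} $E_4(2\tau)$; rather $E_{4,2}=2E_4(2\tau)-E_4(\tau)$, so $E_4-E_{4,2}=2(E_4(\tau)-E_4(2\tau))$. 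This does not affect your reduction, which only uses $E_{4,2}$ as a symbol carried over from the MLDE, but your displayed identification should be deleted.

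For the reduced equation itself, your Picard--Fuchs strategy via the $\mathrm{dn}$-relation $(u')^2=A(u^2-\alpha)(u^2-\beta)$ is plausible but, as you acknowledge, incomplete: the closure uniformly in $n$ (cancellation of the couplings to $\int u^{n\pm2}$) is the actual content, and carrying it out would in effect produce a three-term recursion in $n$ of the Kaneko--Koike type that the paper itself proposes as the way forward. Your fallback (Sturm bound for each fixed $n$) is essentially the paper's own method for $n\le 4$, recast in the cleaner reduced variables. In short: the paper verifies small cases directly from \cite{BFM}; you instead give a uniform reformulation that isolates the difficulty, but neither route yet yields a proof for all $n$.
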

 where $$E_{4,2}(\tau)=1-240 \sum_{m \geq 1} \frac{m^3 q^m}{1+q^m}$$
 is an Eisenstein series on $\Gamma_0(2)$.
 We are able to prove this in a few low rank cases.
\begin{proposition} The conjecture is true for $2 \leq n \leq 4$.
\end{proposition}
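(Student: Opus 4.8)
The plan is to handle each $n\in\{2,3,4\}$ by first producing a closed quasi-modular expression for the character ${\rm ch}[V_1(\frak{psl}(n,n))](\tau)$ and then checking that the operator in (\ref{MMLDE-pslnn}) annihilates it; since both the character and the image of the operator will be seen to lie in an explicit finite-dimensional space of quasi-modular forms on $\Gamma_0(2)$, the identity reduces to a finite Fourier-coefficient verification.

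First I would read off the character from the free-field realization used in the proof of Theorem \ref{psl3}. Taking the ordinary trace rather than the supertrace over $W_{(n)}\otimes F_{(n)}$ replaces each fermionic factor $(1-zq^{m-\frac12})$ by $(1+zq^{m-\frac12})$, so that
$${\rm ch}[V_1(\frak{gl}(n,n))](\tau)={\rm CT}_{z}\prod_{m\geq 1}\frac{(1+zq^{m-\frac12})^n(1+z^{-1}q^{m-\frac12})^n}{(1-zq^{m-\frac12})^n(1-z^{-1}q^{m-\frac12})^n}.$$
Because the two abelian currents spanning the ideal $I$ in $\frak{psl}(n,n)=\frak{gl}(n,n)/I$ are even, they contribute the same factor to the character as to the supercharacter, so the reduction underlying the second Lemma of Section 8 gives ${\rm ch}[V_1(\frak{psl}(n,n))]=\eta(\tau)^2\,{\rm ch}[V_1(\frak{gl}(n,n))]$. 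Rewriting the products through the Jacobi triple product, the character becomes the ${\rm CT}_{z}$ of a meromorphic Jacobi form of index zero with a pole of order $n$ at $z=0$ — structurally identical to the expression for ${\rm ch}[\mathcal U]$ in Section 7, the only change being that the numerator is now a theta constant regular at $z=0$ (a $\vartheta_3$-type factor), which is exactly what introduces the level-$2$, i.e. $\Gamma_0(2)$, behaviour matching $E_{4,2}$. For $n=3$ the same series may alternatively be assembled from the conformal-embedding decomposition (\ref{dec-ce}) as $\sum_i {\rm ch}[\mathcal U_i]\,{\rm ch}[L(\Lambda_i)]$, giving an independent cross-check.

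Next I would extract the constant term by the Laurent/finite-part method of \cite{BFM,DMZ}, exactly as in Section 7. Dividing the Jacobi form by a suitable power of $\eta(\tau)$ to reach index zero, its even Laurent expansion in $z$ has $\lfloor n/2\rfloor+1$ terms whose coefficients $H_{2j}(\tau)$ are modular of weight $-2j$ on $\Gamma_0(2)$; the completed constant term is then quasi-modular of weight zero and bounded depth on $\Gamma_0(2)$, and for $n=2,3,4$ one obtains it explicitly as an eta-quotient times a polynomial in $E_2,\,E_2(2\tau),\,E_4,\,E_{4,2}$. With this closed form in hand, observe that the first two terms of (\ref{MMLDE-pslnn}) are precisely the iterated Ramanujan--Serre derivative $\theta_2\circ\theta_0$, which sends a weight-zero form to a weight-four form, while the final bracket is a weight-four modular form on $\Gamma_0(2)$; hence the left-hand side of (\ref{MMLDE-pslnn}) is a priori a quasi-modular form of weight four and fixed bounded depth on $\Gamma_0(2)$. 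Such forms fill a finite-dimensional space, so the equation holds identically as soon as it holds to a Sturm-type bound of $q$-orders, which I would confirm by direct expansion.

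The main obstacle is the second step: certifying, rather than merely guessing, the precise modular data — weight, depth, and the level $\Gamma_0(2)$ — of ${\rm ch}[V_1(\frak{psl}(n,n))]$. This requires controlling the order-$n$ Laurent expansion of the meromorphic Jacobi form and proving that each coefficient $H_{2j}$ is modular of the asserted weight on $\Gamma_0(2)$; without a guaranteed finite-dimensional ambient space the closing coefficient check is not conclusive. Once the quasi-modular type is pinned down the verification is routine, and the uniformity of this construction in $n$ is precisely what should eventually deliver the full conjecture beyond $n\le 4$.
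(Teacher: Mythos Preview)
Your approach is correct and lands on the same endgame as the paper: once the character is written as an explicit eta-quotient times a polynomial in $E_2$, $E_2(2\tau)$, $E_4$, $E_{4,2}$, the left-hand side of (\ref{MMLDE-pslnn}) lies in a finite-dimensional space of quasi-modular forms on $\Gamma_0(2)$ and one closes by a Sturm-type coefficient check. The difference is purely in how you reach that explicit formula. The paper simply quotes the closed forms from \cite{BFM}---for instance ${\rm ch}[V_1(\frak{psl}(2|2))]=\tfrac{\eta(2\tau)^4}{\eta(\tau)^6}\bigl(\tfrac13 E_2(\tau)-\tfrac43 E_2(2\tau)\bigr)$ and ${\rm ch}[V_1(\frak{psl}(3|3))]=\tfrac{\eta(2\tau)^6}{\eta(\tau)^{12}}E_{2,2}(\tau)$---and plugs them in. You instead propose to re-derive these by writing the $\frak{gl}(n|n)$ character as the constant term of a meromorphic Jacobi form and running the Laurent/finite-part extraction of \cite{BFM,DMZ}. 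That is exactly the method by which \cite{BFM} obtained those formulas in the first place, so you are re-walking their computation rather than citing it; the payoff is a self-contained argument, the cost is more work.

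Your stated ``main obstacle''---pinning down the weight, depth, and level of the extracted constant term---is not a genuine gap: the machinery of \cite{BFM,DMZ} that you invoke (and that the paper already uses in Section~7 for ${\rm ch}[\mathcal U]$) rigorously produces each Laurent coefficient $H_{2j}$ as a modular form of weight $-2j$ on $\Gamma_0(2)$, so the quasi-modular type of the finite part is certified, not guessed. With that in hand your Sturm-bound check is conclusive.
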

\begin{proof}
For $n$ even we only comment on $n=2$ as $n=4$ is very similar. 
In this former case the character is \cite{BFM} 
$$y(q):={\rm ch}[\frak{psl}(2|2)_1](\tau)=\frac{\eta(2 \tau)^4}{\eta(\tau)^6} \left(\frac13 E_2(\tau)-\frac43 E_2(2 \tau) \right).$$
As the logarithmic derivative of the $\eta$-quotient contributes only with a linear combination of $E_2(\tau)$ and $E_2(2 \tau)$, plugging in $y(q)$ inside the left-hand side of of the MLDE leaves as with the same $\eta$-quotient multiplied with a  quasi-modular form of weight $6$.
As we know this ring is generated by $E_{2,2}(\tau)$, $E_{4}(\tau)$ and $E_2(\tau)$, so in order to prove that $y(q)$ satisfies (\ref{MMLDE-pslnn}) we only have to compute 
the first three coefficients in the $q$-expansion and show that they are zero (as there cannot be such a form of weight $6$ with the order of vanishing at $i \infty$ bigger than $3$). This can be easily inspected with a computer.

For $n=3$, the character is modular \cite{BFM} and computation as before gives 
$$y(q):={\rm ch}[\frak{psl}(3|3)_1](\tau)=\frac{\eta(2\tau)^{6}}{\eta(\tau)^{12}} E_{2,2}(\tau),$$
where $E_{2,2}(\tau)=1+24 \sum_{n \geq 1} \frac{n q^n}{1+q^n}$ is modular form of weight 2 on $\Gamma_0(2)$ with a character. Plugging in into (\ref{MMLDE-pslnn}) and applying the same argument gives the claim.
\end{proof}

 The conjecture is also true for $n=1$ (the case of symplectic fermions) 
 with solution $y(\tau)=q^{1/12} \prod_{n \geq 1} (1+q^n)^2$. Degenerate case $n=0$ gives MLDE for $\eta(\tau)^2$ discussed earlier; see (\ref{aa}). Similar 2nd order MLDE can be written 
 for the vacuum $\frak{gl}(n|n)_1$ character.
 We note that the "constant" coefficient in our MLDE can be rewritten as 
 $$-4 n^2 F_4(\tau)+\frac{1}{144}E_4(\tau)$$
 where $F_4(\tau)=q + 8 q^2 +28 q^3 + 64 q^4+...$ is the unique cusp form of weight $4$ on $\Gamma_0(2)$.
 An interesting feature of this family of MLDEs is that for {\em every} $n$ there is a unique vacuum solution of the form $q^a(1+O(q))$, where  $a$ must be $\frac{1}{12}$.
 The other (linearly independent) solution is logarithmic and can be expressed in integral form though.

 %This conjecture is closely related to another conjectural relation. We believe that inside Zhu's $C_2$-algebra of $\frak{psl}(n|n)_1$ we
 %have $[\omega]^2=0$, where $\omega$ is the conformal vector.
 %As demonstrated in \cite{AK16}, this nilpotence relation for ordinary ( non-super ) vertex algebras immediately leads to a 2nd order MLDE where the constant 
 %coefficient is a multiple of $E_4(\tau)$. 
% But for vertex operator superalgebras additionally we must include $E_{4,2}(\tau)$ because it naturally  
 %appears in super Zhu's recursion (see \cite{M-Crelle} for details). So our findings are compatible with $[\omega]^2=0$.
 The method in cannot be used for all $n$. Instead,  we propose to prove this conjecture by emulating approach in  \cite{KK} based on recursions of the family of solutions.
 %This is somewhat different from MLDE in \cite[Formula (6)]{AK16} where finding possible indices $a$ is nontrivial.

 \color{black}
 \section{On super-characters of $V_{-2} (\frak{osp}(n+8 \vert n ))$}
 
 In \cite{AK16}, T. Arakawa and K. Kawasetsu proved  the character formula for the vertex operator algebras associated with the Deligne exceptional series 
at level $k = - \frac{h^{\vee}}{6} -1$. In \cite{AKMPP-JA} and \cite{AKMPP-2018}, the authors discovered a family of Lie superalgebras such that the  associated vertex algebras also have level $k = - \frac{h^{\vee}}{6} -1$ and similar properties as in the case of the Deligne exceptional series. Vertex algebras $V_1 (\frak{psl}(n,n) ) =  V_{-1} (\frak{psl}(n,n) )$ belong to the series. Since we  have demonstrated in previous section that the supercharacters of $V_1 (\frak{psl}(n,n) )$  should not depend on the parameter $n$, one can ask if the similar situation can be happened  in other cases. A natural example is  $V_{-2} (\frak{osp}(n+8 \vert n )$ which is a super-generalization of  the affine vertex algebra $V_{-2}(\frak{so}(8))$. We have the following conjecture (which is also in agreement with \cite{GS}):

\begin{conjecture}
For every even $n \ge 0$, we have
$${\rm sch}[V_{-2} (\frak{osp}(n+8 \vert n)](\tau)   =   {\rm ch} [V_{-2} (\frak{so}(8))](\tau) =\frac{(q \frac{d}{dq}) E_4   (\tau)}{240 \eta(\tau) ^{10}}. $$
\end{conjecture}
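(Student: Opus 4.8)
The plan is to follow the template of Theorem~\ref{psl3}, exploiting that the whole family $V_{-2}(\frak{osp}(n+8\vert n))$ sits at a single ``Deligne-type'' point: for every even $n$ one has dual Coxeter number $h^\vee=(n+8)-n-2=6$, hence level $k=-h^\vee/6-1=-2$, superdimension $\mathrm{sdim}\,\frak{osp}(n+8\vert n)=28$, and central charge $c=\tfrac{k\,\mathrm{sdim}}{k+h^\vee}=-14$, all independent of $n$. In particular the leading $q$-power $q^{-c/24}=q^{7/12}$ of the supertrace is forced to agree with that of $V_{-2}(\frak{so}(8))$, so the genuine content of the conjecture is the $n$-independence of the full supercharacter. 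I would obtain this, exactly as in the $\frak{psl}(n\vert n)$ case, from a cancellation between the $n$ added bosonic and $n$ added odd directions; conceptually this is the statement that the Duflo--Serganova reduction of $\frak{osp}(n+8\vert n)$ is $\frak{osp}(8\vert 0)=\frak{so}(8)$, in agreement with \cite{GS}.

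First I would set up the vertex-algebraic Duflo--Serganova (odd-screening) functor. Choose an isotropic odd element $x\in\frak{osp}(n+8\vert n)_{\bar 1}$ with $[x,x]=0$ and $\langle x,x\rangle=0$; then the associated current has regular OPE $x(z)x(w)\sim 0$, so $Q:=x_{(0)}$ satisfies $Q^2=0$ and defines a differential on $V_{-2}(\frak{osp}(n+8\vert n))$. The key step is to identify the cohomology $H(V_{-2}(\frak{osp}(n+8\vert n)),Q)$ with $V_{-2}(\frak{osp}(n+6\vert n-2))$, possibly tensored with a decoupled $\beta\gamma$--$bc$ pair whose supertrace is trivial. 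Since the supertrace is the Euler characteristic of the $Q$-complex it is preserved, giving
$${\rm sch}[V_{-2}(\frak{osp}(n+8\vert n))](\tau)={\rm sch}[V_{-2}(\frak{osp}(n+6\vert n-2))](\tau),$$
and descending induction on $n$ down to $n=0$ then yields ${\rm sch}[V_{-2}(\frak{osp}(n+8\vert n))]={\rm ch}[V_{-2}(\frak{so}(8))]$.

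As a concrete alternative that makes the cancellation manifest, and which I would use to cross-check the cohomological identification, I would produce a free-field realization generalizing the Weyl$\,\otimes\,$Clifford picture used for $V_1(\frak{gl}(n\vert n))$ in the previous section: an $\frak{so}(8)$ ``core'' together with $n$ symplectic bosons and $n$ symplectic fermions. Exactly as in the computation ${\rm sch}[W_{(n)}\otimes F_{(n)}]=1$, the added $n$ bosonic and $n$ fermionic first-order systems contribute reciprocal infinite products to the supertrace, so their net contribution is $1$ and only the $\frak{so}(8)$ core survives. This reduces the supertrace of $V_{-2}(\frak{osp}(n+8\vert n))$ to that of $V_{-2}(\frak{so}(8))$.

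The hard part will be controlling simplicity and higher cohomology at this non-generic level. The Duflo--Serganova and free-field constructions are cleanest for universal (or free-field) objects, whereas the statement concerns the \emph{simple} vertex algebra $V_{-2}$; one must show that the reduction of the simple algebra is again simple, equivalently that the relevant higher $Q$-cohomology either vanishes or cancels in the Euler characteristic and that the decoupled systems genuinely split off. This is the analogue of proving that a free-field realization generates the simple quotient, which already required care in the previous section. Once the $n$-independence is established, the final equality ${\rm ch}[V_{-2}(\frak{so}(8))](\tau)=\tfrac{(q\frac{d}{dq})E_4(\tau)}{240\,\eta(\tau)^{10}}$ is nothing but the Arakawa--Kawasetsu character formula \cite{AK16} for the Deligne point of $\frak{so}(8)$ at central charge $-14$, which I would simply invoke.
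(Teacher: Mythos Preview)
The paper does not prove this statement: it is explicitly labeled a \emph{Conjecture}, and immediately after stating it the authors write ``We plan to discuss a proof in our forthcoming papers.'' There is therefore no proof in the paper against which to compare your proposal.

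That said, your strategy is well aligned with the hints the paper does give. The authors note that the conjecture ``is also in agreement with \cite{GS}'' (the Gorelik--Serganova work on the Duflo--Serganova functor), which is precisely the reduction mechanism you propose, and your preliminary computation that $h^\vee$, the superdimension, and hence $c$ are all $n$-independent is the right sanity check. You also correctly isolate the genuine difficulty: the Duflo--Serganova and free-field arguments are clean for universal objects, and the passage to the \emph{simple} quotient $V_{-2}$ at this non-generic level --- controlling the maximal ideal under the reduction, or equivalently verifying that the proposed realization generates the simple algebra --- is not addressed in the paper and is presumably the content of the promised forthcoming work. Your alternative ``$\frak{so}(8)$ core plus $n$ cancelling $\beta\gamma$/$bc$ pairs'' picture is more speculative than the $\frak{psl}$ analogue: in the $V_1(\frak{gl}(n\vert n))$ case the Weyl--Clifford realization is standard and used in the paper, whereas no comparable free-field realization of $V_{-2}(\frak{osp}(n+8\vert n))$ is constructed or cited here, so that route would require building the realization from scratch before the cancellation argument can even begin. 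The final appeal to \cite{AK16} for the $\frak{so}(8)$ character is unproblematic.
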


We plan to discuss a  proof in our forthcoming papers.

% {\bf AM: mislim da imamo dovoljno - nema potrebe za ovaj dio.} {\color{blue} D. Slazem se.}
 
% \section{Novi dio} 
 
% Recall that  the vertex algebra $L(\Lambda_0)$ admits the lattice realization as $V_{R_{n.-1}} = M_{n-1} (1) \otimes {\C}[ R_{n-1}]$ where $R_{n-1}$ is the root lattice for Lie algebra %$sl(n)$. Let $\bar \Lambda_i$, $=1, \dots, n-1$ denotes the fundamental weights for $sl(n)$. Then
% $$L(\Lambda_i) =  V_{ R_{n-1} + \overline \Lambda _i} = M_{n-1} (1) \otimes {\C}[R_{n-1} + \overline \Lambda _i].$$
 
% \begin{theorem} (In progress) 
% \label{decomposition-psl(n,n)} The vertex algebra $V_{1} (\g) $ is a simple current extension of $( \mathcal A(1) ^0 ) ^{\otimes n}  \otimes M(1) ^{\otimes 2 (n-1)}$  and
% $$ V_{1} (\g) =  \bigoplus_{i=0} ^{n-1} \left(  \bigoplus_{ s\equiv i \ \mbox{mod} \ n }   \bigoplus_{  z \in Q_n^{(s)} }    
%\pi_{z_1}  \otimes \pi_{z_2} \cdots  \otimes \pi _{z_n} \otimes M_{n-1} (1)  e^ { \gamma_ z }  \right)  \otimes  V_{ R_{n-1} + \overline \Lambda _i}   $$ 
%\end{theorem}

 \end{document}